\author{Alexis Garcia}
\address{Université d'Avignon}
\email{alexis.garcia@univ-avignon.fr}
\title{On the classification of meromorphic affine connections on complex compact surfaces}
\renewcommand{\email}[2][]{%
  \ifx\emails\@empty\relax\else{\g@addto@macro\emails{,\space}}\fi%
  \@ifnotempty{#1}{\g@addto@macro\emails{\textrm{(#1)}\space}}%
  \g@addto@macro\emails{#2}%
}
\keywords{}
\subjclass{51, 53}
\newtheoremstyle{defstyle} 
    {1em}                    
    {1em}                    
    {\small}                   
    {}                           
    {\bfseries}                   
    {.}                          
    {.5em}                       
    {}  
\newtheorem*{definition*}{Definition}
\newtheorem*{theorem*}{Theorem}
\newtheorem{theorem}{Theorem}[section]
\newtheorem{lemma}{Lemma}[section]
\newtheorem{proposition}{Proposition}[theorem]
\newtheorem{corollary}{Corollary}[section]
\theoremstyle{defstyle}
\newtheorem{definition}{Definition}[section]
\newcommand{\der}[2]{\frac{\partial #1}{\partial#2}}
\renewcommand*\env@matrix[1][*\c@MaxMatrixCols c]{%
  \hskip -\arraycolsep
  \let\@ifnextchar\new@ifnextchar
  \array{#1}}
\date{\today}
\begin{document}

\maketitle

\tableofcontents

\section{Introduction}
\subsection{Uniformization of complex compact manifolds and meromorphic geometric structures}
The question of classifying compact complex manifolds, by means of geometric and topological invariant, is an old and still active one. The first historical example of an answer is the Riemann's uniformization theorem for complex compact manifolds of dimension one (i.e. Riemann surfaces or compact complex  smooth curves). In particular, it asserts that any such a complex manifolds is the quotient of an open subset $U$ of: either the projective curve $\mathbb{P}^1$, the complex line $\mathbb{C}$, or the Poincaré disk $\mathbb{H}$, by a discrete subgroup $\Gamma$ of projective transformations preserving $U$ with free left action.

This is an example of \textit{uniformizability} in the sense that any complex compact manifold of dimension one is the quotient $\Gamma\backslash U$ of an open set $U\subset X$ of a fixed space $X$ by a subgroup $\Gamma$ of automorphisms of $X$ preserving a fixed \textit{geometric structure} and with a free action. The notion of \textit{geometric structures} will not be defined in this paper but may be thought as one of these examples : a holomorphic affine connection (see \autoref{affineconnection}), a holomorphic projective connection (see for example \cite{KobayashiOchiai}) or a holomorphic reduction of a $k$-th order frame bundle of $X$.

\subsection{Meromorphic affine connections on surfaces}
In \cite{InoueKobayashi}, Inoue, Kobayashi and Ochiai classified holomorphic affine connections on compact complex surfaces. It turns out that any complex compact surfaces admitting such a geometric structure is also equipped with a flat one, that is a holomorphic affine structure. Many of them are quotients of an open subset of $\mathbb{C}^2$ by affine transformations. This result was completed by Kobayashi and Ochiai in \cite{KobayashiOchiai}, where it appears that any complex compact surface endowed with a holomorphic projective structure is uniformizable by the unit ball in $\mathbb{C}^2$.

It is thus natural to investigate which complex compact manifold can be endowed with a particular type of holomorphic geometric structure. In a recent paper \cite{BDM}, Biswas, Dumitrescu and McKay gave rather general classification result, asserting that many holomorphic geometric structures (in particular holomorphic affine connections) can't be beared by simply connected compact complex manifolds with constant meromorphic functions.  

We may also ask whether allowing the geometric structure to admits some reasonable singularities (namely poles) could enable to encompass more compact complex manifolds, and investigate for a definition of meromorphic versions of the uniformization. As an example, though there are few projective manifolds $M$ endowed with holomorphic affine connections, since this implies that all Chern classes are zero, any such manifold is endowed with a finite map $f:M\longrightarrow \mathbb{P}^{N}$ for some integer $N\geq 1$. The canonical projective structure on $\mathbb{P}^N$ then induces through $f$ a meromorphic (flat) projective connection on $M$ (see \cite{BD}).

In this paper, we study the existence of \textit{meromorphic affine connections} (\autoref{affineconnection}) on complex compact surfaces of \textit{algebraic dimension} $a(M)=1$ (the algebraic dimension is defined above \eqref{algebraicreduction}). We almost obtain a classification of such pairs, that we call \textit{meromorphic affine complex compact surface of algebraic dimension one}, in the following sense. By the well-known work of Kodaira (\cite{KodairaCI},\cite{KodairaCII},\cite{KodairaSI}), complex compact surfaces of algebraic dimension one are known to be elliptic surfaces. Moreover, we can restrict ourselves to minimal surfaces since a meromorphic affine connection on a minimal surface with $a(M)=1$ is the \textit{pullback} (\autoref{pullback}) of a meromorphic affine connection on its minimal model (\autoref{minimalmodel}).  We first prove the following  (\autoref{invariantsconstant}):

\begin{theorem*} Any meromorphic affine complex compact surface of algebraic dimension one is an isotrivial surface.
\end{theorem*}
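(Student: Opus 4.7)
The plan is to show that the functional invariant $J : B \to \mathbb{P}^1$ of the elliptic fibration $\pi : M \to B$ (the algebraic reduction) is constant, which is the content of isotriviality. Under the standing assumptions, $M$ minimal with $a(M) = 1$ forces, via Kodaira's classification, the existence of such a fibration onto a smooth projective curve $B$, and every meromorphic function on $M$ is a pullback $\pi^{\ast} g$ from $B$.

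The key idea is to extract from the meromorphic affine connection $\nabla$ a fiberwise invariant that detects the complex structure of smooth fibers. Since $T_{M/B}$ restricts on each smooth fiber $E_t = \pi^{-1}(t)$ to the trivial tangent bundle of an elliptic curve, the restriction $\nabla|_{E_t}$ is itself a meromorphic affine connection on $E_t$. Comparing it with the (essentially unique) translation-invariant flat connection on $E_t$ yields a meromorphic differential $\eta_t \in H^0(E_t, \Omega^1_{E_t}(\ast))$. I plan to globalize this construction into a meromorphic section of $\Omega^1_{M/B}$, whose fiberwise numerical invariants (locations and orders of zeros and poles, residues) descend to meromorphic functions on $M$; by $a(M)=1$ each such function factors through $\pi$ and is therefore constant along every generic fiber of $\pi$.

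The remaining, and hardest, step is to deduce from the constancy of these affine invariants the constancy of the $j$-invariant itself. I would attack this by combining the construction above with the transformation law of the curvature and torsion tensors of $\nabla$ under a change of holomorphic trivialization along the fibers, thereby producing a meromorphic trivialization of the variation of Hodge structure $R^1\pi_\ast \mathcal{O}_M$ on $B$, which would be incompatible with a non-constant $J$. The main obstacle lies precisely here: one must ensure that the invariants extracted from $\nabla$ are genuinely sensitive to the complex (rather than merely affine) structure of the fiber, and that their fiberwise constancy propagates to the modular invariant rather than being absorbed into the affine normalization of the connection on each $E_t$. A supplementary monodromy argument may be needed: the parallel transport of $\nabla$ between nearby generic fibers should be in tension with the non-trivial monodromy of a hypothetical non-isotrivial elliptic fibration on $H^1$ of fibers, yielding the desired contradiction.
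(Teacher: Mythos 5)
There is a genuine gap, and it sits exactly where you locate it yourself: the passage from ``fiberwise affine invariants are constant'' to ``$J$ is constant'' is the entire content of the theorem, and your proposal offers only a wish list for it. Worse, the fiberwise invariant you propose to extract is essentially empty. Since $a(M)=1$, every divisor on $M$ (in particular the polar divisor of $\nabla$) is a union of fibers of $\pi$ (this is Kodaira's Theorem 4.3, quoted as \autoref{invariance} in the paper), so on a \emph{generic} fiber $E_t$ the restricted connection is holomorphic; a holomorphic affine connection on an elliptic curve is $d+c\,dz$ for a constant $c$, so your $\eta_t$ is a constant multiple of the invariant differential, with no zeros, no poles, and no residues. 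It carries no information about the period $\tau(t)$ whatsoever, and its constancy in $t$ (which is automatic from $a(M)=1$) cannot be ``in tension'' with non-trivial monodromy. The proposed detour through a meromorphic trivialization of $R^1\pi_*\mathcal{O}_M$ is not substantiated by any construction.

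The information about $\tau$ enters not through the restriction of $\nabla$ to a fiber but through the $z_1$-dependence of the gluing data of the fibration, and this is what the paper's proof exploits. On the universal cover $\tilde N'\times\mathbb{C}$ the surface is recovered by quotienting by $\psi_2(z_1,z_2)=(z_1,z_2+\tau(z_1))$ and by the deck transformations $\varphi_\gamma(z_1,z_2)=(\gamma\cdot z_1,\tfrac{\mu_\gamma}{c_\gamma\tau(z_1)+d_\gamma}z_2+f_\gamma(z_1))$; invariance of the connection matrix under $\psi_2$ produces functional equations involving $\tau'(z_1)$ (handled by \autoref{lemmasemiperiodic} and \autoref{coefficientsinvariant}, which force a rigid triangular form for the matrix when $\tau'\neq 0$), and invariance under $\varphi_\gamma$ produces the constraint \eqref{equationdelta} on $\delta_\gamma=\tfrac{\mu_\gamma}{c_\gamma\tau+d_\gamma}$, forcing $c_{\gamma_\alpha}=0$ for the loops $\gamma_\alpha$ around singular points. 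The product relation \eqref{fundamentalgroup} in $\pi_1(N')$ together with Kodaira's Theorem 7.3 then shows the local monodromies are trivial and $J$ has no poles, hence is constant. If you want to salvage your approach, the object to study is not $\nabla|_{E_t}$ but the mixed Christoffel symbols and their transformation law under the $t$-dependent translations and the deck group; as written, your argument cannot be completed.
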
 

Up to finite cover, a minimal meromorphic affine complex compact surface is thus a principal elliptic fiber bundle, and we have explicit descriptions of such surfaces in terms of their universal covers. The problem is then split in two : first we have to classify meromorphic affine principal elliptic fiber bundles, and then study the possible finite quotients of such pairs. This is completely done when the base curve is the projective line $\mathbb{P}^1$ (\autoref{classificationHopf1} and \autoref{quotientsHopf}) or an elliptic curve (\autoref{classificationtori},\autoref{quotientstori},\autoref{classificationprimaryKodaira} and \autoref{classificationsecondary}). In the remaining case (hyperelliptic curve), we describe a subset of codimension 3 in the space of meromorphic affine connections (\autoref{classificationgenus2}) extending the work by Klingler (\cite{Klingler}). However, we prove that there is no non-trivial quotient of such meromorphic affine surface (\autoref{quotientsprincipalellipticgenus2}). So there a no new examples arising from these principal elliptic bundles. These results can be compared to the result of \cite{InoueKobayashi} to obtain:

\begin{theorem}\label{nonew} Any meromorphic affine surface with $a(M)=1$ endowed with a meromorphic affine connection also admits a flat affine holomorphic connection.

\end{theorem}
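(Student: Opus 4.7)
The overall strategy is a comparison argument: I show that every meromorphic affine complex compact surface $M$ with $a(M)=1$ has an underlying complex manifold which already appears in the Inoue--Kobayashi classification \cite{InoueKobayashi} of surfaces carrying a flat holomorphic affine connection. Using the pullback principle (\autoref{pullback}) I first reduce to the case where $M$ equals its own minimal model (\autoref{minimalmodel}). The isotriviality theorem stated above then yields a finite étale cover $\widetilde{M}\to M$ that is a principal elliptic fiber bundle over a smooth curve $B$. Since the list of surfaces carrying a flat holomorphic affine connection in \cite{InoueKobayashi} is stable under finite free quotients (one enlarges the deck group of a given quotient of $\mathbb{C}^2$ by an affine subgroup), it is enough to identify the biholomorphism type of $\widetilde{M}$ in each of the case-by-case classifications announced in this paper.

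I then split according to the genus of $B$. When $g(B)=0$, \autoref{classificationHopf1} and \autoref{quotientsHopf} show that $M$ is a primary or secondary Hopf surface of the explicit affine type appearing in \cite{InoueKobayashi}, for which the flat affine connection is induced from the standard flat affine structure on $\mathbb{C}^2\setminus\{0\}$. When $g(B)=1$, the combination of \autoref{classificationtori}, \autoref{quotientstori}, \autoref{classificationprimaryKodaira} and \autoref{classificationsecondary} identifies $M$ as a complex torus, a primary Kodaira surface, a secondary Kodaira surface, or a free finite quotient of one of them; each admits a flat holomorphic affine connection, arising either from the translation structure of $\mathbb{C}^2$ or from a left-invariant flat affine connection on the Heisenberg-type Lie group whose cocompact lattice produces the Kodaira surface. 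When $B$ has genus two, \autoref{classificationgenus2} exhibits a codimension three family of meromorphic affine connections, but \autoref{quotientsprincipalellipticgenus2} guarantees the absence of non-trivial free finite quotients, so the surfaces produced in this regime are already among those reached in the previous two cases and contribute no new example.

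The main obstacle is precisely the hyperelliptic case: one must verify that the three-codimensional family of \autoref{classificationgenus2} does not host any principal elliptic bundle whose complex structure escapes the Inoue--Kobayashi list, and this is where \autoref{quotientsprincipalellipticgenus2} plays an indispensable role, since without it a genuinely new underlying surface could in principle appear. Once this check is made, concatenating the three cases shows that the underlying complex surface of every meromorphic affine surface with $a(M)=1$ belongs to the Inoue--Kobayashi list, and therefore carries a flat holomorphic affine connection, which is exactly \autoref{nonew}.
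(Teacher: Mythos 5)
Your overall strategy --- reduce to the minimal model, invoke isotriviality to pass to a principal elliptic bundle over a curve $B$, split on the genus of $B$, and match the resulting surfaces against the list of \cite{InoueKobayashi} --- is the same as the paper's, which assembles \autoref{reductionprincipalelliptic} with \autoref{quotientsHopf}, \autoref{classificationsecondary}, \autoref{quotientstori} and \autoref{noquotientsprincipalellipticg2}. However, your handling of the genus $\geq 2$ case contains a genuine error. You assert that the surfaces produced in this regime ``are already among those reached in the previous two cases.'' This is false: a principal elliptic bundle with odd first Betti number over a curve of genus $\geq 2$ is neither a Hopf surface, nor a two torus, nor a primary or secondary Kodaira surface; it is a properly elliptic surface, a \emph{distinct} entry in the Inoue--Kobayashi list. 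Referring back to the $g(B)\in\{0,1\}$ cases therefore proves nothing here, and as written your argument never produces a flat holomorphic affine connection on these surfaces. The correct justification, which the paper uses, has two ingredients you omit: \autoref{b1even} excludes the bundles with even first Betti number (they admit no meromorphic affine connection at all), and for odd first Betti number the flat holomorphic affine connection on $\hat M$ is supplied directly by Klingler's construction \cite{Klingler} (the connection $\hat\nabla_0$ built from the $G$-invariant flat structure on $G/P^+\subset\mathbb{C}^2\setminus\{0\}$); \autoref{quotientsprincipalellipticgenus2} then gives $M=\hat M$, so $M$ itself carries that flat structure.

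Two secondary imprecisions. The cover $\hat M\to M$ of \eqref{finitecover} is a priori a finite \emph{ramified} cover (ramified over the multiple fibers), not an \'etale one; that it is trivial or unramified is the \emph{conclusion} of the quotient theorems, not an input to them. And your claim that the Inoue--Kobayashi list is ``stable under finite free quotients'' by enlarging the deck group requires the finite group to act affinely with respect to the flat structure, which is not automatic; it is exactly what the explicit descriptions of the quotients in \autoref{classificationsecondary}, \autoref{quotientstori} and \autoref{quotientsprincipalellipticgenus2} verify case by case.
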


As an example, no $K3$-surface with $a(M)=1$ admits a meromorphic affine connection.

\subsection{Organization of the paper}

The paper is organized as follows. In \autoref{section2}, we recall the notion of meromorphic affine connections. In \autoref{section3}, we collect classical facts from the work of Kodaira on elliptic surfaces that will be used in the rest of the paper, and prove \autoref{invariantsconstant} and \autoref{reductionprincipalelliptic}, reducing the problem of classification to the one of meromorphic affine principal elliptic bundles and their quotients, as explained above. Then, in \autoref{section4},\autoref{section5} we classify meromorphic affine complex compact surfaces of algebraic dimension one arising as quotients of principal elliptic fiber bundles over $\mathbb{P}^1$ or an elliptic curve. In \autoref{section6}, we treat the case of an hyperelliptic base curve. We give a description of a codimension three subset in the space of meromorphic affine connections on the corresponding principal elliptic bundle, in terms of meromorphic differential operators, and prove that no other examples arise from a quotient of such meromorphic affine surfaces.

\section{Meromorphic affine connections and minimality}\label{section2}
\subsection{Meromorphic connections and linearizations}
We begin by recalling the definitions of two objects appearing recurrently in this paper. Let $M$ be a complex manifold and $D=\underset{\alpha \in I}{\sum} D_\alpha$ an effective divisor. In the rest of the paper, we will denote by $TM$ the sheaf of holomorphic vector fields, $\Omega^1_M$ the sheaf of holomorphic one forms, and by: \begin{equation}\label{sheafmeromorphic} \mathcal{O}_M(*D) \end{equation} the sheaf of meromorphic functions on $M$ with poles a combination of the irreductible components $D_\alpha$.  If $\Psi:M\longrightarrow M'$ is a morphism of complex manifold $\Psi^*$ (resp. $\Psi_*$ stand for the pullback functor for sheaves (resp. the pushforward).

\begin{definition}\label{connection} Let $M$ be a complex manifold and $\mathcal{E}$ be a $\mathcal{O}_M$-module. A meromorphic connection on $M$ with poles at $D$ is a morphism of $\underline{\mathbb{C}}_M$-sheaves: $$\begin{array}{ccccc}\nabla&:& \mathcal{E}&\longrightarrow & \Omega^1_M\otimes \mathcal{E}(*D) \end{array}$$ satisfying the Leibniz identity: $$\forall f\in \mathcal{O}_M(U),\forall s\in \mathcal{E}(U),\, \nabla(fs)=df\otimes s + f\nabla(s)$$ 
\end{definition}

\begin{definition}\label{linearization} Let $G$ be a group and $M$ be a complex $G$-manifold with right (resp. left) action. Let $\mathcal{E}$ be a $\mathcal{O}_M$-module. A right (resp. left) $G$-linearization of $\mathcal{E}$ is a family $(\phi_g)_{g\in G}$ of isomorphisms: $$\begin{array}{ccccc}\phi_g&:& \mathcal{E}&\overset{\sim}{\longrightarrow}& g^* \mathcal{E}\end{array}$$ with the property: $$\begin{array}{cccc}\forall g,g'\in G,\, \phi_{gg'} &=& g'^*(\phi_g)\circ \phi_{g'} \\ &&&\\  \end{array}$$ A $G$-linearized $\mathcal{O}_M$-module is a pair $(\mathcal{E},(\phi_g)_{g\in G})$.
\end{definition}

In the case of a discrete group $G$, a $\mathcal{O}_M$-module with a $G$-linearization is a $G$-equivariant $\mathcal{O}_M$-module as defined in \cite{Lunts}. In this case, if $\overline{M}=G\backslash M$ is a complex manifold, then, denoting by $q:M\longrightarrow \overline{M}$ the quotient map, $G$ acts naturally on $q_* \mathcal{E}(U)$ for any $U\subset \overline{M}$. Hence, there is a functor from the category of $G$-linearized $\mathcal{O}_M$-modules to the one of $\mathcal{O}_{\overline{M}}$-modules mapping $(\mathcal{E},(\phi_g)_{g\in G})$ to the sheaf: \begin{equation}\overline{\mathcal{E}}=(q_* \mathcal{E}
)^G\end{equation} of $G$-invariant sections. It is an equivalence of categories in the case where the action of $G$ is free (\cite{Lunts}, Proposition 2.2.5). 

\begin{definition}\label{pullback} \begin{enumerate}\item Let $M$ and $M'$ be two complex manifold, $D'$ an effective divisor of $M'$ and $\nabla'$ be a meromorphic connection on a $\mathcal{O}_{M'}$-module $\mathcal{E}'$ with poles at $D'$.  Let $\mathcal{E}$ be a $\mathcal{O}_M$-module, $f:M\longrightarrow M'$ a isomorphism of complex manifold and $\varphi: \mathcal{E}\longrightarrow \mathcal{O}_M\otimes f^*\mathcal{E}'$ an isomorphism of $\mathcal{O}_M$-modules. The corresponding pullback $(f,\varphi)^\star \nabla'$ of $\nabla'$ is the meromorphic connection $\nabla$ on $\mathcal{E}$ with poles at $D=f^* D'$ defined by the commutative diagram: \begin{equation}\label{pullbackpsi2}\xymatrix{ \mathcal{E} \ar[rr]^{\nabla} \ar[d]_{\varphi} & &\Omega^1_{M} \otimes \mathcal{E}(*D) \\ \mathcal{O}_{M}\otimes f^* \mathcal{E}' \ar[rr]_{\overline{f^* \nabla'}} & &f^* \Omega^1_{M'} \otimes f^* \mathcal{E}'(*D')\ar[u]_{df^* \otimes \varphi^{-1}} }\end{equation} where: \begin{itemize}\item 
    $\begin{array}{ccccc} df&:& TM(*D) & \longrightarrow & \mathcal{O}_M\otimes f^* TM'(*D')  \end{array}$ is the sheaf-theoretic differential of $f$
    \item $\overline{f^* \nabla'}$ is the extension of the sheaf-theoretic pullback $f^* \nabla': f^* \mathcal{E}'\longrightarrow f^* \Omega^1_{M'}\otimes \mathcal{E}'(*D')$ by the Leibniz rule to $\mathcal{O}_M \otimes f^* \mathcal{E}'$. \end{itemize}
    \item Let $(\mathcal{E},(\phi_g)_{g\in G})$ be a $G$-linearized $\mathcal{O}_M$-module. A meromorphic connection $\nabla$ on $\mathcal{E}$ is invariant by $(\phi_g)_{g\in G}$ if $(g,\phi_g)^\star \nabla=\nabla$ for any $g\in G$. \end{enumerate}
    
\end{definition}

Since a holomorphic connection on a $\mathcal{O}_M$-module can be alternatively described as a global section of $\mathcal{H}om(\mathcal{E},J^1\mathcal{E})$ splitting the jet-sequence of $\mathcal{E}$ (see \cite{Atiyah}), the correspondance between linearized sheaves and sheaves on the quotient immediately implies the following:
\begin{lemma}\label{quotientconnection} Let $q:M\longrightarrow M'$ be a ramified cover between two complex manifolds, with Galois group $\Gamma \subset Aut(M)$. Let $\Phi_\Gamma=(\phi_\gamma)_{\gamma \in \Gamma}$ be a $\Gamma$-linearization of a locally free $\mathcal{O}_M$-module $\mathcal{E}$. Consider the locally free $\mathcal{O}_{M'}$-module  $\overline{\mathcal{E}}=(q_* \mathcal{E})^{\Phi_\Gamma}$. 
Suppose the existence of a holomorphic connection $\nabla$ on $\mathcal{E}$ which is invariant through any $\phi_\gamma$ ($\gamma \in \Gamma$). Then:
\begin{enumerate}\item If $q$ is unramified, then there exists induces a (unique) holomorphic connection $\nabla'$ on $\overline{\mathcal{E}}$ such that the pullback $(q,Id)^\star \nabla'$ (see \autoref{pullback}) coïncides with $\nabla$ when restricted to $q^* \overline{\mathcal{E}}$.
\item If $q$ is finite, then there exists a unique meromorphic connection $\nabla'$ on $\overline{\mathcal{E}}$, with poles supported on the ramification locus, such that the pullback $(q,Id)^\star \nabla'$ coïncides with the restriction of $\nabla$ to $q^* \overline{\mathcal{E}}$.
\end{enumerate}
\end{lemma}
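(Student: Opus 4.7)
The plan is to follow the hint preceding the lemma and systematically translate everything into the Atiyah jet description of a connection: a holomorphic connection on $\mathcal{E}$ is an $\mathcal{O}_M$-linear splitting $\sigma_\nabla : \mathcal{E} \to J^1\mathcal{E}$ of the jet exact sequence $0 \to \Omega^1_M \otimes \mathcal{E} \to J^1\mathcal{E} \to \mathcal{E} \to 0$. The linearization $\Phi_\Gamma$ of $\mathcal{E}$ induces canonically (via the differentials $dg$, $g\in \Gamma$) a $\Gamma$-linearization on $\Omega^1_M \otimes \mathcal{E}$ and hence on $J^1 \mathcal{E}$, turning the jet sequence into an exact sequence of $\Gamma$-linearized $\mathcal{O}_M$-modules. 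A direct check then shows that the invariance hypothesis $(g,\phi_g)^\star \nabla = \nabla$ for all $g\in \Gamma$ is equivalent to the $\Gamma$-equivariance of the splitting $\sigma_\nabla$.

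For (1), the étale hypothesis implies that $J^1$ commutes with $q^*$, since first jets are a purely local notion and $q$ is a local isomorphism; consequently, as $\Gamma$-equivariant sheaves, the jet sequence of $\mathcal{E}$ is the $q$-pullback of the jet sequence of $\overline{\mathcal{E}}$. Applying the equivalence of categories between $\Gamma$-linearized locally free $\mathcal{O}_M$-modules and locally free $\mathcal{O}_{M'}$-modules (recalled right after \autoref{linearization}) to the equivariant splitting $\sigma_\nabla$ produces a splitting $\sigma_{\nabla'}$ of the jet sequence of $\overline{\mathcal{E}}$, that is, a holomorphic connection $\nabla'$. The compatibility $(q,\mathrm{Id})^\star\nabla' = \nabla$ on $q^*\overline{\mathcal{E}}$ and the uniqueness of $\nabla'$ are then formal consequences of this equivalence.

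For (2), denote by $R\subset M$ the ramification divisor of $q$ and by $B = q(R) \subset M'$ the branch divisor. Over $M'_0 = M'\setminus B$ the induced cover is étale, and part (1) provides a unique holomorphic connection $\nabla'_0$ on $\overline{\mathcal{E}}|_{M'_0}$ with the desired compatibility. The remaining task is to show that $\nabla'_0$ extends to a meromorphic connection on all of $M'$ with polar locus supported in $B$. This is local on $M'$: near a point $p\in B$, choose a holomorphic frame $(e_i)$ of $\overline{\mathcal{E}}$ and write $\nabla'_0 = d + A$ in this frame. The identity $(q,\mathrm{Id})^\star \nabla'_0 = \nabla$ translates into a matrix relation of the form $q^* A = B_0$, where $B_0$ is the holomorphic connection matrix of $\nabla$ in the pulled-back frame $(q^*e_i)$. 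Since $q$ is finite, inverting this relation only requires dividing by the Jacobian of $q$, which vanishes to finite order along $R$; hence the entries of $A$ are meromorphic on $M'$ with finite-order poles along $B$. This produces the desired meromorphic extension $\nabla'$, whose uniqueness is inherited from that on $M'_0$.

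The main obstacle is precisely this last step in (2): checking that the connection obtained on the étale locus has only pole-type (and not essential) singularities along $B$, with pole order controlled by the ramification index. Everything else reduces formally to the Lunts equivalence of categories combined with the jet-theoretic reformulation of a connection; the $\Gamma$-invariance of $\nabla$ never has to be unpacked in coordinates because it is built into $\sigma_\nabla$ being an equivariant morphism of $\Gamma$-linearized sheaves.
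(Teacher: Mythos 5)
Part (1) of your proposal is correct and is essentially the route the paper itself advertises in the sentence preceding the lemma: encode the connection as an equivariant splitting of the jet sequence and apply the equivalence between $\Gamma$-linearized sheaves and sheaves on the quotient. (The paper's written proof of (1) works directly with $q_*$ and invariant sections rather than jets, but the content is the same.) The overall architecture of your part (2) — restrict to the \'etale locus, apply (1), then extend meromorphically by a local computation — also matches the paper.

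The gap is in part (2), precisely at the step you yourself single out as the main obstacle. You write the compatibility as $q^*A = B_0$ ``where $B_0$ is the \emph{holomorphic} connection matrix of $\nabla$ in the pulled-back frame $(q^*e_i)$'' and then recover $A$ by dividing by the Jacobian of $q$. But $B_0$ is not holomorphic in general: $(q^*e_i)$ is a frame of $\mathcal{O}_M\otimes q^*\overline{\mathcal{E}}$, which along the ramification locus is only a \emph{proper} subsheaf of $\mathcal{E}$, because the $\Gamma$-invariant sections near a fixed point of $\gamma$ are twisted by the eigenvalues of $\phi_\gamma$ (in the paper's local normal form they are the tuples $(z^{m-k_1}g_1(z^m),\dots,z^{m-k_r}g_r(z^m))$). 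Hence $(q^*e_i)$ degenerates along $R$ relative to any holomorphic frame of $\mathcal{E}$, and $B_0$ already carries poles along $R$ from this frame degeneration, in addition to the Jacobian factor; your argument accounts only for the latter, i.e.\ it silently assumes $\mathcal{O}_M\otimes q^*\overline{\mathcal{E}}=\mathcal{E}$ near $R$, which holds only when the linearization acts trivially on the fibres over the fixed points. The conclusion does survive --- the comparison matrix between $(q^*e_i)$ and a holomorphic frame of $\mathcal{E}$ is holomorphic with determinant vanishing to finite order, so $B_0$ is meromorphic with finite-order poles and the division by the Jacobian still yields a meromorphic $A$ --- but establishing this is exactly the content of the paper's explicit computation (reduce to a one-dimensional slice, trivialize $\nabla$ flatly, diagonalize $\phi_\gamma$ with root-of-unity eigenvalues, and compute $\nabla(s)$ on the invariant sections), and it is missing from your write-up.
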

\begin{proof}
\begin{enumerate}
    \item Consider the restriction $\nabla'$ of the morphism of $\underline{\mathbb{C}}_{M'}$-sheaves $q_* \nabla$ to the subsheaf $\overline{\mathcal{E}}$. It satisfies the Leibniz identity, and by definition of $(\gamma,\phi_\gamma)^\star \nabla=\nabla$ (\autoref{pullback}), the invariance of $\nabla$ by the linearization $\Phi_\Gamma$  implies that $\nabla'$ restricts as a morphism: $$\begin{array}{ccccc} \nabla'&:& \overline{\mathcal{E}} &\longrightarrow & (q_* \Omega^1_M\otimes \mathcal{E})^\Gamma \end{array}$$
    Now, we have a natural inclusion of sheaves: \begin{equation}\label{inclusionpushforward}\Omega^1_{M'}\otimes \overline{\mathcal{E}} =(q_* \Omega^1_M)^\Gamma \otimes (q_* \mathcal{E})^\Gamma\subset (q_* \Omega^1_M \otimes \mathcal{E})^\Gamma\end{equation} where the right handside is the subsheaf of invariant sections through the linearisation obtained by tensorizing the isomorphisms $(d\gamma^*)_{\gamma \in \Gamma}$ and $(\phi_\gamma)_{\gamma \in \Gamma}$. 
    Since $q$ is unramified, we have $$\mathcal{O}_M \underset{\mathcal{O}_{M'}}{\otimes} q^* \mathcal{O}_{M'} = \mathcal{O}_M$$ and since $\Gamma$ acts freely, we also have (see \cite{Lunts}, ): $$\mathcal{O}_M\otimes q^*\overline{\mathcal{E}}=\mathcal{E}$$ Hence the pullbacks of the vector bundles corresponding to the left and right handside of \eqref{inclusionpushforward} coïncide. Hence $\Omega^1_{M'}\otimes \overline{\mathcal{E}}=(q_* \Omega^1_M \otimes \mathcal{E})^\Gamma$, so that $\nabla'$ is in fact a holomorphic connection on $\overline{\mathcal{E}}$. Its pullback through $(q,Id)$ coïncides with the restriction of $\nabla$ to $q^* \overline{\mathcal{E}}$ by construction.
    \item The restriction of $q$ to the complement of its ramification locus $S\subset M$ is an unramified cover $q|_{M\setminus S}:M\setminus S \longrightarrow M'\setminus S'$. By $(1)$, we get a holomorphic connection $\nabla'$ on $\overline{\mathcal{E}}|_{M'\setminus S'}$ whose pullback through $(q|_{M\setminus S},Id)$ coïncides with the restriction of $\nabla|_{M\setminus S}$ to $q^* \overline{\mathcal{E}}|_{M\setminus S}$. 
    Denote by $j$ the inclusion of complex manifold of $M'\setminus S'$ in $M'$.  It remains to prove that $j_* \nabla'$ restricts to the subsheaf $\overline{\mathcal{E}}\subset j_* \overline{\mathcal{E}}|_{M'\setminus S'}=(j_*\mathcal{O}_{M'\setminus S'})\otimes \overline{\mathcal{E}}$ as a morphism with values in the subsheaf $\Omega^1_{M'}\otimes \overline{\mathcal{E}}(*S')$. It is sufficient to prove this property for the restriction of $(\overline{\mathcal{E}},j_*\nabla')$ to any complex submanifold $\Sigma\subset M$ of dimension one. Therefore, we assume from now on that $q$ is a finite cover between two complex curves $M,M'$. Pick $x\in S$ and $y=q(x)$. Since $\Gamma$ is finite, there exists a generator $\gamma\in \Gamma$ leaving a neigborhood $U$ of $x$ invariant, and local coordinates $z,z'$ at neighborhoods of $x$ and $y$ such that: $$q(z)=z^m \text{ and } \gamma \cdot z=e^{\frac{2i\pi}{m}}z$$ for some integer $m\geq 1$. Moreover, $\nabla$ is a flat connection on $\mathcal{E}$, so there is a local trivialization of $\mathcal{E}$ at a neighborhood $U$  of $x$ such that $\nabla$ identifies with the de Rham differential $d$ on $\mathcal{O}_U^{\oplus r}$ ($r$ stands for the rank of $\mathcal{E}$).  By the invariance of $\nabla$ through $\Phi_\Gamma$, the automorphism $\phi_\gamma$ maps the kernel $\underline{\mathbb{C}}_{U}^{\oplus r}$ of $d$ to its pullback $\gamma^* \underline{\mathbb{C}}_U^{\oplus r}$. It is therefore given by a constant linear transformation. Since $\gamma$ is of finite order, up to applying a constant linear change of trivialization, this linear transformation is diagonal with roots of the unity as eigenvalues, that is $\phi_\gamma$ corresponds to the isomorphism: $$\begin{array}{ccccc}\phi_\gamma &:& \mathcal{O}_U^{\oplus r}&\longrightarrow & \gamma^* \mathcal{O}_{M}^{\oplus r}\\&& (f_1(z),\ldots,f_r(z)) & \mapsto & (e^{\frac{2k_1 i\pi}{m}}f_1(e^{-\frac{2i\pi}{m}}z),\ldots,e^{\frac{2k_r i\pi}{m}}f_r(e^{-\frac{2i\pi}{m}}z)) \end{array}$$ with $k_j\in\{1,\ldots,m\}$. Hence, the restriction of $q^*\overline{\mathcal{E}}$ to $U$ is the subsheaf of $r$-uples of the form: $$s=(z^{m-k_1}g_1(z^m),\ldots,z^{m-k_r}g_r(z^m))$$ for some holomorphic functions $g_1,\ldots,g_r$ on $z^m(U)$. Then, through the trivialization of $\mathcal{E}|_U$ just defined: $$\nabla(s)=dz\otimes (\frac{m-k_1}{z}z^{m-k_1-1}g_1(z_1^m),\ldots,\frac{m-k_r}{z}z^{m-k_r-1}g_r(z^m))+ q^\star(dz')\otimes s' $$ for some $s'\in q^*\overline{\mathcal{E}}(U)$.  We can rewrite this section as: $$\nabla(s)= q^\star(dz') \otimes( \frac{z^{m-k_1}}{z^m} h_1(z),\ldots, \frac{z^{m-k_r}}{z^m} h_r(z))$$ for some meromorphic functions $h_1,\ldots,h_r$ on $z^m(U)$ with poles along $z^m(x)$. Hence $\nabla(s)$ is a section of $(\Omega^1_U)^\Gamma \otimes q^* \overline{\mathcal{E}}|_U(*x)$. This being true for any $x\in S$, and since $\Omega^1_{M'}=(q_* \Omega^1_M)^\Gamma$, we get: $$j_* \nabla'(\overline{\mathcal{E}})\subset \Omega^1_{M'}\otimes \overline{\mathcal{E}}(*S')$$ as required. This exactly means that $\nabla'$ defines a meromorphic connection on $\overline{\mathcal{E}}$, and this connection satisfies the pullback property by construction.
\end{enumerate}
\end{proof}

\subsection{Meromorphic affine connections and pullback}

We introduce the meromorphic geometric structure considered in this paper:
\begin{definition}\label{affineconnection} Let $M$ be a complex manifold and $D$ an effective divisor of $M$. A meromorphic affine connection on $(M,D)$ is a meromorphic connection on $TM$ with poles at $D$. \end{definition}

The pullback defined in \autoref{pullback} defines the category of meromorphic affine connections, with arrows given by the pullbacks through $(f,df)$ for $f$ an isomorphism of complex manfiolds.

\begin{lemma}\label{lemmapullback} Let $q:\hat{M}\longrightarrow M$ be a morphism of complex manifolds of the same dimension. Let  $\nabla$ be a meromorphic affine connection on $M$ and $\hat{\nabla}=q^\star \nabla$. Let $\Psi$ be an automorphism of $M$ and $\hat{\Psi}$ an automorphism of $\hat{M}$ lifting $\Psi$ through $q$. 

Then $\Psi^\star \nabla=\nabla$ if and only if $\hat{\Psi}^\star \hat{\nabla}=\hat{\nabla}$.

\end{lemma}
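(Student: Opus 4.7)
The statement is essentially a formal consequence of the functoriality of the pullback construction in \autoref{pullback}, applied to the commutation relation $q\circ \hat{\Psi}=\Psi\circ q$ provided by the lifting hypothesis. I would organize the argument in three steps.

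First I would establish functoriality: given two composable morphisms $N_1\xrightarrow{f_1} N_2 \xrightarrow{f_2} N_3$ of complex manifolds of the same dimension and a meromorphic affine connection $\nabla$ on $N_3$, one has $(f_2\circ f_1)^\star \nabla = f_1^\star(f_2^\star \nabla)$. This is a routine verification from the defining diagram \eqref{pullbackpsi2}, combining the chain rule $d(f_2\circ f_1) = f_1^* df_2\circ df_1$ with the transitivity of sheaf-theoretic pullbacks and of the Leibniz extension.

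Second, using $q\circ \hat{\Psi}=\Psi\circ q$, functoriality gives
\[
\hat{\Psi}^\star \hat{\nabla} \;=\; \hat{\Psi}^\star q^\star \nabla \;=\; (q\circ \hat{\Psi})^\star \nabla \;=\; (\Psi\circ q)^\star \nabla \;=\; q^\star\bigl(\Psi^\star \nabla\bigr).
\]
The direct implication follows at once: if $\Psi^\star \nabla = \nabla$, then the last term equals $q^\star \nabla = \hat{\nabla}$.

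For the converse I need injectivity of $q^\star$ on meromorphic affine connections. Since $\dim \hat{M}=\dim M$ and $q$ is a morphism for which $q^\star$ is sensibly defined, $q$ has maximal rank on a dense open $U\subset \hat{M}$, where it is a local biholomorphism, so $q^\star$ is locally invertible on $U$. Thus $q^\star\bigl(\Psi^\star \nabla\bigr) = q^\star \nabla$ forces $\Psi^\star \nabla$ and $\nabla$ to coincide on $q(U)$, and since a meromorphic connection on $M$ is determined by its restriction to any dense open subset of $M$, we conclude $\Psi^\star \nabla = \nabla$. The only point demanding care is this last density assertion, which requires $q$ to be dominant (equivalently, to have dense image). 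In the applications of this lemma in the rest of the paper, $q$ will always be a ramified or unramified cover, so dominance is automatic; I would either mention this as a standing hypothesis or handle it by restricting both sides of the equality to the dense open subset $q(U)\subset M$ and invoking the identity principle for meromorphic sections.
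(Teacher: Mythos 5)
Your proposal is correct and takes essentially the same route as the paper, which writes down the commutative diagram expressing $dq\circ d\hat{\Psi} = (\hat{\Psi}^*dq)\circ(\hat{\Psi}^*\otimes d\Psi)$ coming from $q\circ\hat{\Psi}=\Psi\circ q$ and then chases the pullback-defining diagram; your ``functoriality plus cancellation of $q^\star$'' formulation is just a cleaner packaging of that same diagram chase. You are in fact more careful than the paper on the converse direction, where the faithfulness of $q^\star$ (automatic here since $dq$ must be generically invertible for $q^\star\nabla$ to be defined at all, so one can conclude by the identity principle on a nonempty open set) is used silently.
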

\begin{proof} Since $\hat{\Psi}$ is the lift of $\Psi$ through $q$, we have the following commutative diagram: 
 \begin{equation}\xymatrix{
T\hat{M} \ar[d]_{d\hat{\Psi}} \ar[r]^-{dq} & \mathcal{O}_{\hat{M}}\otimes q^* T\hat{M} \ar[d]^{\hat{\Psi}^*\otimes d\Psi} \\ \hat{\Psi}^* T\hat{M} \ar[r]_-{\hat{\Psi}^*dq}& \mathcal{O}_{\hat{M}}\otimes q^* T\hat{M} 
} \end{equation}
The equivalence asserted is then a direct consequence of the diagram defining a pullback (\autoref{pullback}).
\end{proof}

\subsection{Algebraic dimension and general property of elliptic surfaces}
Let $M$ be a compact complex manifold of complex dimension $n\geq 1$. Moishezon proved that the field of meromorphic functions $\mathbb{C}(M)$ is a field of finite transcendancy degree over the field of constant functions $\mathbb{C}$. This degree is called the \textit{algebraic dimension} of $M$ and denoted by $a(M)$. In particular $a(M)\leq n$, and there exists a bimeromorphic map $\Psi : M \longrightarrow M' $ and a holomorphic   map \begin{equation}\label{algebraicreduction} \pi : M' \longrightarrow N \end{equation} onto a complex compact manifold of dimension $a(M)$, with the property $\mathbb{C}(M)=\pi^* \mathbb{C}(N)$.

In this paper, we will focus on complex compact surfaces with $a(M)=1$.

A \textit{elliptic surface} is a holomorphic fibration $M\overset{\pi}{\longrightarrow} N$ of a complex compact surface over a (compact) complex smooth curve, such that for a generic $y\in N$, the fiber $M_y:=\pi^{-1}(y)$ is a (smooth) complex torus.

 We recall the following result from Kodaira (\cite{KodairaCI}):
 \begin{theorem}\label{a1elliptic} Any compact complex surface with $a(M)=1$ is the total space of an elliptic surface given by the algebraic reduction map \eqref{algebraicreduction}. \end{theorem}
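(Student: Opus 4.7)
The plan is to analyze the generic fiber of the algebraic reduction map $\pi : M' \to N$ and show that its genus can only be $1$.

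First I would reduce to a clean geometric setup. By the definition of algebraic reduction recalled above \eqref{algebraicreduction}, there is a bimeromorphic modification $\Psi : M \to M'$ together with a holomorphic surjection $\pi : M' \to N$ onto a compact complex manifold of dimension $a(M) = 1$. Since every compact complex smooth curve is automatically projective, $N$ is a smooth projective curve. Up to replacing $\pi$ by its Stein factorization and further blowing up $M'$, I may assume that $\pi$ has connected fibers. A generic fiber $F = \pi^{-1}(y)$ is then a smooth connected compact complex curve of some genus $g \geq 0$, and the whole problem reduces to showing that $g = 1$.

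The core of the proof is a case analysis on $g$. If $g = 0$, then $F$ is a $\mathbb{P}^1$, and since the function field $\mathbb{C}(N)$ is a $C_1$ field (Tsen's theorem), $\pi$ admits a rational section, so $M'$ is bimeromorphic to $\mathbb{P}^1 \times N$. As both factors are projective, $a(M') = a(\mathbb{P}^1 \times N) = 2$, contradicting the assumption $a(M)=1$. If instead $g \geq 2$, the relative dualizing sheaf $\omega_{M'/N}$ has positive degree $2g-2$ on smooth fibers, so the direct images $\pi_\ast \omega_{M'/N}^{\otimes k}$ are vector bundles on $N$ whose rank grows linearly in $k$. Combined with their semi-positivity (Fujita--Kawamata), one obtains that the plurigenera $P_k(M')$ grow quadratically in $k$, so $\kappa(M') = 2$; but surfaces of general type are projective, hence $a(M') = 2$, again a contradiction. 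Therefore only $g = 1$ remains, so $\pi$ is an elliptic fibration and $M'$ is an elliptic surface as claimed.

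The step I expect to be the main obstacle is the case $g \geq 2$: a direct plurigenera estimate on $M'$ relies on the non-trivial positivity theorems for direct images of relative canonical sheaves. The more elementary route, which is the one Kodaira followed in the cited references, avoids this altogether by invoking the full classification of compact complex surfaces through the Kodaira dimension: every minimal surface of non-negative Kodaira dimension which is not elliptic is either a torus, a $K3$, a bielliptic surface or of general type, and in each of these cases $a(M) \in \{0, 2\}$, never $1$. Hence the intermediate value $a(M) = 1$ forces an elliptic fibration, and the algebraic reduction map is the fibration in question.
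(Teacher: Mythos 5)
First, a point of comparison: the paper does not prove this statement at all --- it is recorded as a recollection from Kodaira's work \cite{KodairaCI}, with no proof supplied --- so your argument is necessarily a reconstruction of Kodaira's, not a variant of anything in the paper. Your skeleton (reduce to connected fibres, let $g$ be the genus of the generic fibre, exclude $g=0$ and $g\geq 2$) is indeed the classical one, but both exclusions have genuine gaps as written. For $g=0$, invoking Tsen's theorem presupposes that the generic fibre of $\pi$ is an algebraic curve over the field $\mathbb{C}(N)$, i.e.\ that the fibration carries an algebraic structure over the generic point of $N$; for a surface that is a priori non-algebraic this is essentially the point at issue, so the invocation is circular unless you first build such a model (e.g.\ by checking that $\pi_*\omega_{M'/N}^{-1}$ has generic rank $3$ and that the relative anticanonical map realizes the generic fibre as a conic over $\mathbb{C}(N)$). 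Even granting a rational section, the step ``hence $M'$ is bimeromorphic to $\mathbb{P}^1\times N$'' quietly uses that $M'$ is Moishezon, which is again what you are trying to contradict. The standard direct exclusion of $g=0$ goes through adjunction ($K_{M'}\cdot F=-2$ with $F$ nef, so $p_g=0$) together with a projectivity criterion, rather than through Tsen.

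The case $g\geq 2$ is the more serious problem, as you anticipate: the Fujita--Kawamata semipositivity theorems are established for Kähler or projective families, and nothing here makes $M'$ Kähler, so that route does not apply as stated. Your fallback via the Enriques--Kodaira classification does yield the conclusion, but note two things. First, it is a far heavier input than the theorem itself (the assertion ``$a=1$ implies elliptic'' is one of the standard intermediate outputs of that classification, proved by Kodaira in \cite{KodairaCI} by a direct analysis of the algebraic reduction, not by appeal to the finished table); second, your summary must be read with care, since tori and K3 surfaces with $a(M)=1$ certainly exist --- the paper studies two-tori of algebraic dimension one at length and explicitly mentions K3 surfaces with $a(M)=1$ --- they are simply elliptic, so the dichotomy you need is ``non-elliptic minimal surfaces in those classes have $a\in\{0,2\}$,'' which is uncomfortably close to restating what is to be proved. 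Finally, the statement concerns the algebraic reduction map on $M$ itself, whereas you work throughout on the modification $M'$ and never descend; for $a(M)=1$ this is harmless but should be said (the pencil cut out by $\mathbb{C}(N)$ is base-point free, since a base point would produce a curve of positive self-intersection and force $M$ to be projective), so the reduction map is already holomorphic on $M$.
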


 Moreover: \begin{theorem}\label{invariance} Let $M\overset{\pi}{\longrightarrow} N$ be an elliptic surface with $a(M)=1$. Any divisor $D$ of $M$ is of the form $D=\pi^* C$ for some divisor $C$ of $N$.

\end{theorem}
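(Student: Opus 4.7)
The plan is to reduce to the case of a prime divisor and then to show that every irreducible compact curve $D \subset M$ is supported in a single fiber of $\pi$. The general statement then follows by $\mathbb{Z}$-linearity, decomposing any divisor into its irreducible components and recombining fiber multiplicities into a divisor $C$ on $N$ with $\pi^* C = D$.

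So let $D$ be a prime divisor of $M$. Its image $\pi(D) \subset N$ is a compact irreducible analytic subset of the smooth curve $N$, hence either a single point $\{y\}$ or all of $N$. The core step is to exclude the second possibility. Assume for contradiction that $\pi(D) = N$; then $\pi|_D : D \to N$ is a finite branched cover of degree $d := D \cdot M_y \geq 1$ for a generic fiber $M_y$. I would contradict the defining property $\pi^* \mathbb{C}(N) = \mathbb{C}(M)$ of the algebraic reduction by producing a line bundle on $M$ with Iitaka dimension $2$. Specifically, set $L := \mathcal{O}_M(D + k M_y)$ for a large integer $k$. Since $M_y^2 = 0$ and $D \cdot M_y = d > 0$, we have $L \cdot L = D^2 + 2kd > 0$ as soon as $k > -D^2/(2d)$. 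Riemann--Roch on the complex surface $M$ then yields $\chi(M, L^{\otimes n}) = n^2 L^2/2 + O(n)$, and the Kodaira--Moishezon estimates upgrade this to $h^0(M, L^{\otimes n}) \sim n^2 L^2/2$ for $n \gg 0$. Thus $L$ is big, and its Iitaka fibration exhibits a meromorphic map from $M$ onto a surface, forcing $a(M) \geq 2$, a contradiction.

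Once $\pi(D) = \{y\}$ is known for every prime divisor $D$, each $D$ is an irreducible component of the fiber $\pi^{-1}(y) = \pi^*\{y\}$. Summing over the prime components of the original divisor and rearranging the multiplicities of the fiber components at each point $y \in \pi(|D|)$ (a finite set) produces the divisor $C = \sum_y m_y [y]$ of $N$ with $\pi^* C = D$.

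The main obstacle is the bigness argument in Step~3: Riemann--Roch holds on any compact complex surface, but the implication $L^2 > 0 \Rightarrow L$ Moishezon is delicate in the a priori non-Kähler setting. This is precisely the point where Kodaira's detailed analysis of elliptic surfaces with $a(M) = 1$ in \cite{KodairaCI} is indispensable: it provides both the required structural control on the direct images $\pi_* L^{\otimes n}$ along the fibration and the reconciliation of irreducible fiber components with the divisor $\pi^*\{y\}$ needed to close Step~4.
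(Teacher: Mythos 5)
The paper itself gives no argument for this statement: its ``proof'' is the single line ``See \cite{KodairaCI}, Theorem 4.3'', so your attempt is being measured against Kodaira's theorem rather than against an in-text argument. Your reduction to prime divisors and your exclusion of horizontal components is essentially the standard route and is sound in outline: if an irreducible $D$ dominates $N$ then $D\cdot M_y>0$ for a generic fiber, $L=\mathcal{O}_M(D+kM_y)$ has $L^2>0$ for $k\gg 0$, and Riemann--Roch forces $h^0(L^{\otimes n})$ to grow quadratically once one adds the (easy but necessary) observation that $h^2(L^{\otimes n})=h^0(\mathcal{K}_M\otimes L^{-n})$ stays bounded, since multiplication by a nonzero section of $L^{\otimes n}$ injects it into $H^0(\mathcal{K}_M)$. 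This gives $a(M)=2$, the desired contradiction.

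The genuine gap is your Step 4. Knowing that every prime component of $D$ lies in a fiber only yields $D=\sum_i a_i\Theta_i$ with each $\Theta_i$ an irreducible component of some fiber; for $D=\pi^*C$ you need the contribution over each point $y$ to be an integral multiple of the full scheme-theoretic fiber $\pi^*[y]=\sum_j n_j\Theta_j$. If a fiber is reducible, a single component $\Theta_1$ is a prime divisor that is not $\pi^*$ of anything, and no ``rearranging of multiplicities'' can repair this; likewise, if $\pi^*[y]=m\Theta$ is a multiple fiber, the reduced curve $\Theta$ equals $\pi^*([y]/m)$ and is not the pullback of an integral divisor. So the conclusion you want is strictly stronger than ``every curve is vertical'', and the extra content --- that for $a(M)=1$ the irreducible curves are exactly the appropriate fibers --- is precisely what Kodaira's Theorem 4.3 supplies and what your argument does not establish; it does not follow from your bigness step, because components of reducible fibers have negative self-intersection and produce no class of positive square. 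You acknowledge this at the end as something to be ``reconciled'' via \cite{KodairaCI}, but deferring the decisive step to the reference leaves the proof incomplete: you would need either to prove irreducibility and non-multiplicity of the relevant fibers, or to weaken the conclusion to ``$D$ is supported on fibers of $\pi$'', which is in fact all the paper uses in most of its later applications.
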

\begin{proof} See \cite{KodairaCI}, Theorem 4.3. \end{proof}
 
 Now, let $M\overset{\pi}{\longrightarrow}N$ be a general elliptic surface. The  fibers of $\pi$ which are not smooth elliptic curves are denoted by $M_{y_\beta}$ ($\beta\in J$), and for any local coordinate $z_\beta$ on $N$ centered at $y_\beta$, there exists an integer $m_\beta>1$ and an equation $f_\beta$ for $M_\beta$ with \begin{equation}\label{multiplicity} f_\beta = (z_\beta \circ \pi)^{m_\beta} \end{equation} The corresponding $y_\beta$ are the \textit{singular points} and the above integer will be called the \textit{multiplicty} of $y_\beta$ (resp. of the \textit{singular fiber} $M_\beta$), according to the work of Kodaira (\cite{KodairaCI}). 

\begin{proposition}\label{localformelliptic} Let $M\overset{\pi}{\longrightarrow}N$ be an elliptic surface,   $(y_\beta)_{\beta}$ the singular points and $N'$ their complement in $N$. Then: \begin{enumerate}
    \item For any $y\in N'$, there is a neighborhood $\overline{U}$ of $y$ in $N'$, and a holomorphic function $\tau: \overline{U}\longrightarrow \mathbb{H}$ such that : $$\begin{array}{ccc} \pi^{-1}(\overline{U}) & \simeq & \overline{U}\times \mathbb{C}/\langle \psi_1,\psi_2 \rangle \end{array} $$ where   $z_1,z_2$ are global coordinates adapted to the natural fibration and: \begin{equation}\label{psi12} \begin{array}{ccc} \psi_1(z_1,z_2)=(z_1,z_2+1) & \text{and} & \psi_2(z_1,z_2)=(z_1,z_2+\tau) \end{array} \end{equation}
    \item There exists a global holomorphic function $\tilde{\tau}:\tilde{N'}\longrightarrow \mathbb{H}$ on the universal cover of $\tilde{p}:\tilde{N'}\longrightarrow N'$, such that for any $y\in N'$, and any $\tau$ as in 1., $\tau=\tilde{\tau}\circ s$ for some section of $\tilde{p}$ near $y$.
\end{enumerate} \end{proposition}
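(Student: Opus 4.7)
My plan is to deduce the local description in part (1) from the standard period construction for smooth families of elliptic curves, and then in part (2) to handle the ambiguity of the construction by lifting to the universal cover.

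For part (1), I would start from the observation that the restriction $\pi : \pi^{-1}(N')\longrightarrow N'$ is a proper holomorphic submersion whose fibers are smooth elliptic curves, hence is locally $C^\infty$-trivial with torus fibers. Fix $y\in N'$ and choose a contractible neighborhood $\overline U\subset N'$ of $y$. Since $\overline U$ is contractible, the local system $R^1\pi_*\mathbb Z|_{\overline U}$ is constant of rank $2$, and the Hodge filtration gives a holomorphic line subbundle $\pi_*\Omega^1_{\pi^{-1}(\overline U)/\overline U}\subset R^1\pi_*\mathbb C\otimes\mathcal O_{\overline U}$ which is trivial on the disk. Picking a global generator $\omega$ of this line and a symplectic $\mathbb Z$-basis $\gamma_1,\gamma_2$ of the local system yields two periods $\omega_1(y')=\int_{\gamma_1(y')}\omega$, $\omega_2(y')=\int_{\gamma_2(y')}\omega$ depending holomorphically on $y'$. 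After dividing $\omega$ by $\omega_1$ (shrinking $\overline U$ if necessary to keep $\omega_1$ nonvanishing), we may assume $\omega_1\equiv 1$ and $\omega_2=\tau$ takes values in $\mathbb H$ up to swapping $\gamma_1,\gamma_2$.

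Next I would use a local holomorphic section $\sigma:\overline U\longrightarrow \pi^{-1}(\overline U)$, which exists after shrinking $\overline U$ because $\pi$ is a submersion. This section provides a base point on every fiber, and integrating $\omega$ from $\sigma(y')$ along paths gives a biholomorphism $\pi^{-1}(\overline U)\simeq \overline U\times \mathbb C/\langle \psi_1,\psi_2\rangle$ with the two translations being exactly the periods $\psi_1(z_1,z_2)=(z_1,z_2+1)$ and $\psi_2(z_1,z_2)=(z_1,z_2+\tau(z_1))$, as required by \eqref{psi12}.

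For part (2), the construction above depends on the choice of the basis $(\gamma_1,\gamma_2)$; another choice changes $\tau$ by an element of $SL_2(\mathbb Z)$, so the function $\tau$ is well-defined only modulo the monodromy representation $\rho:\pi_1(N')\longrightarrow SL_2(\mathbb Z)$ acting on $H^1(M_y,\mathbb Z)$. Pulling the whole family back along the universal cover $\tilde p:\tilde{N'}\longrightarrow N'$ kills this monodromy: on $\tilde{N'}$ the local system $\tilde p^*R^1\pi_*\mathbb Z$ is globally constant and admits a single global symplectic basis, so the local construction glues to a well-defined global holomorphic function $\tilde\tau:\tilde{N'}\longrightarrow\mathbb H$. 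For any $y\in N'$ with local $\tau$ as in part (1), the chosen basis $(\gamma_1,\gamma_2)$ over $\overline U$ pulls back from the global basis over $\tilde{N'}$ via some choice of local section $s:\overline U\longrightarrow\tilde{N'}$ of $\tilde p$, and by construction $\tau=\tilde\tau\circ s$ on $\overline U$.

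The only real subtleties I anticipate are the choices needed to get $\tau$ into $\mathbb H$ (fixing an orientation of the basis $(\gamma_1,\gamma_2)$, which is possible since the intersection form is definite of sign depending on the complex orientation), and checking that different local sections of $\tilde p$ yield the same $\tilde\tau$ up to the identification, both of which are formal once the local system has been trivialized on $\tilde{N'}$.
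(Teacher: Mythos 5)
Your proposal is correct and follows essentially the same route as the paper: an Ehresmann-type local trivialization combined with the period construction of $\tau$ for part (1), and killing the $SL_2(\mathbb{Z})$-ambiguity by passing to the universal cover, where the relevant local system trivializes, for part (2). The only cosmetic difference is that you build the biholomorphism by integrating the relative $1$-form from a local holomorphic section, whereas the paper starts from the smooth trivialization $\psi$ and verifies holomorphy via the induced $\mathbb{C}$-linear map on homology of each fiber.
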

\begin{proof}
    \begin{enumerate}
        \item Pick a simply connected neighborhood $\overline{U}$ of $y$ in $N'$. Then the fundamental group of $\pi^{-1}(\overline{U})$ is spanned by the image of any pair of generators $\gamma_1,\gamma_2$ of $\pi_1(M_y,x)$ ($x\in \pi^{-1}(y)$). By the Ehresmanh theorem, and since all smooth elliptic curves are diffeomorphic, shrinking $\overline{U}$ we can assume the existence of a diffeomorphism: \begin{equation}\label{topisolocal}
            \begin{array}{ccccc} \psi&:& \pi^{-1}(\overline{U}) & \longrightarrow & \overline{U}\times \mathbb{C}/\mathbb{Z}\oplus i \mathbb{Z} \end{array}
        \end{equation} such that $\pi=proj_1\circ \psi$. In particular, the cycles in $H_1(\overline{U}\times \mathbb{C}/\mathbb{Z}\oplus i \mathbb{Z},\mathbb{Z})$ corresponding to $1$ and $i$ are mapped to cycles $\gamma_1,\gamma_2\in H^1(\pi^{-1}(\overline{U}),\mathbb{Z})$, restricting as cycles $\gamma_{1,y},\gamma_{2,y}$ on any fiber $M_y=\pi^{-1}(y)$. These cycles form a basis of the real vector space $H^0(M_y,\Omega^1_{M_y})^*$, canonically identified with the universal cover $\mathbb{C}$ and its is well known that $M_y\simeq H^0(M_y,\Omega^1_{M_y})^*/\mathbb{Z}\gamma_{1,y}\oplus \mathbb{Z} \gamma_{2,y}$. We then let \begin{equation}\label{Psielliptic}\begin{array}{ccccc}\Psi&:&\pi^{-1}(\overline{U})& \overset{\sim}{\longrightarrow} &\overline{U}\times \mathbb{C}/\langle (z_1,z_2)\mapsto (z_1,z_2+k_1 \tau(z_1)+k_2) \rangle \\&& \psi^{-1}(z_1,[z_2]) & \mapsto & [[(z_1,z_2)]] \end{array}\end{equation} where $[[(z_1,z_2)]]$ is the class of $(z_1,z_2)\in \overline{U}\times \mathbb{C}$ in the target complex manifold. By construction, $\Psi$ lifts to the universal covers. The homology map induce by the lifting maps $(\gamma_{1,y},\gamma_{2,y})$ on $(1,\tau(y))$, so is $\mathbb{C}$-linear. By the remark preceding \eqref{Psielliptic}, $\Psi$ restricts on each fiber $M_y$ as an isomorphism onto $\mathbb{C}/\mathbb{Z}\oplus \tau(y)\mathbb{Z}$. Hence $\Psi$ is a biholomorphism.

        \item This is equivalent to the assertion that the sheaf $T$ whose local sections are the $\tau$ as in 1. contains a local system on $N'$. This is immediate since for any $y\in N'$, the set of $\tau\in \mathbb{H}$ such that $M_y \simeq \mathbb{C}/\mathbb{Z}\oplus \tau\mathbb{Z}$ is a finite set.
    \end{enumerate}
\end{proof}

Recall tat for any $\tau,\tau'\in \mathbb{H}$, the tori $\mathbb{C}/\mathbb{Z}\oplus \tau\mathbb{Z}$ and 
 $ \mathbb{C}/\mathbb{Z}\oplus \tau'\mathbb{Z}$ are isomorphic exactly when $\tau,\tau'$ lie in the same $SL_2(\mathbb{Z})$-orbit in $\mathbb{H}$, through the action: $$\begin{pmatrix} a & b\\ c & d\end{pmatrix}\tau:= \frac{a\tau+b}{c\tau +d}$$ Since the action is free, there exists an associated representation $\rho:\pi_1(N',y)\longrightarrow SL_2(\mathbb{Z})$ such that: \begin{equation}\label{rhoKodaira} \tilde{\tau}(\gamma\cdot \tilde{y})=\rho(\gamma)\cdot \tilde{\tau}(\tilde{y}) \end{equation} for any $\tilde{y}\in \tilde{p}^{-1}(y)$ and $\gamma \in \pi_1(N',y)$. 
 
 The associated local system over $N'$ is called the \textit{homological invariant} of $\pi$ by Kodaira. The following facts can be found in \cite{KodairaCII}:

 \begin{proposition}\label{coordinatesKodaira} Let $M\overset{\pi}{\longrightarrow}N$ be an elliptic surface, and $N'$ (resp. $M'$) the complement of its singular points $(y_\beta)_{\beta\in J}$ (resp. its singular fibers in $M$). Then: \begin{enumerate}
     \item If $J\neq\emptyset$ then $\tilde{N'}$ is an open subset of $\mathbb{C}$ and $H^1(\tilde{N'},\mathcal{O}_{\tilde{N'}})=\{0\}$. By \autoref{localformelliptic}, this implies $\tilde{M'}=\tilde{N'} \times \mathbb{C}\subset \mathbb{C}^2$ and we let $(z_1,z_2)$ be the canonical coordinates on $\mathbb{C}^2$.
     \item Let $y\in N'$. For any $\gamma\in \pi_1(N',y)$, the corresponding automorphism of $\tilde{N'}$ lifts canonically to an automorphism $\varphi_\gamma$ of $\tilde{M}'$. Moreover, $\varphi_\gamma$ commutes with $\psi_1$ and $\psi_2$ (see \eqref{psi12}), and the corresponding map $$\begin{array}{ccc}\pi_1(N',y) & \longrightarrow & \langle \psi_1,\psi_2, (\varphi_\gamma)_{\pi_1(N',y)}\rangle / \langle \psi_1,\psi_2\rangle \\ \gamma& \mapsto &\varphi_\gamma \mod \langle \psi_1,\psi_2\rangle \end{array}$$ is a homomorphism.
     \item In the case $J\neq \emptyset$, let $\gamma\in \pi_1(N',y)$ and $\varphi_\gamma$ the  automorphism of the universal cover of $M'$ as in 2.  There exists a constant $\mu_\gamma \in \mathbb{C}^*$ and a holomorphic function $f_\gamma$ on $\tilde{N'}$ such that: \begin{equation}\label{universalautomorphisms}
         \varphi_\gamma(z_1,z_2)=(\gamma\cdot z_1, \frac{\mu_\gamma}{c_\gamma \tilde{\tau}(z_1)+d_\gamma}z_2 + f_\gamma(z_1)) \end{equation} where
    $\rho(\gamma)=\begin{pmatrix}a_\gamma & b_\gamma\\c_\gamma & d_\gamma \end{pmatrix}$ (see \eqref{rhoKodaira}).
\item The pullback (see \autoref{pullback}) through the universal covering $\tilde{p}:\tilde{M'}\longrightarrow M'$ induces a bijection between the set of meromorphic affine connections on $M'$ and the set of meromorphic affine connections $\tilde{\nabla}$ on $\tilde{M'}$ satisfying: \begin{equation}\label{invarianceuniversal} \psi_1^\star \tilde{\nabla} = \psi_2^\star \tilde{\nabla} = \varphi_\gamma^\star \tilde{\nabla} = \tilde{\nabla} \end{equation} for any $\gamma \in \pi_1(N',y)$, where $\psi_1,\psi_2$ are defined as in \autoref{localformelliptic}.
 
 \end{enumerate}

 \end{proposition}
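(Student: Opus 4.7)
The plan is to establish the four items in order. Items (1)--(3) unpack the structural analysis of elliptic surfaces from \cite{KodairaCII} in our notation, while item (4) is a direct application of the descent statement \autoref{quotientconnection} to the universal covering map $\tilde{p}:\tilde{M'}\longrightarrow M'$.

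For item (1), since $J\neq\emptyset$ the Riemann surface $N'$ is non-compact, and by the uniformization theorem $\tilde{N'}$ is biholomorphic either to $\mathbb{C}$ or to the upper half-plane $\mathbb{H}$; in either case it is a Stein open subset of $\mathbb{C}$, so Cartan's Theorem B gives $H^1(\tilde{N'},\mathcal{O}_{\tilde{N'}})=0$. To deduce $\tilde{M'}=\tilde{N'}\times\mathbb{C}$, I would pull back the local product descriptions of \autoref{localformelliptic}(1) through the universal cover of $N'$ and patch them into a global holomorphic trivialization of the fiberwise universal cover $\tilde{\pi}:\tilde{M'}\longrightarrow\tilde{N'}$; the obstruction to this patching is a cocycle with values in $\mathcal{O}_{\tilde{N'}}$, and its class in $H^1(\tilde{N'},\mathcal{O}_{\tilde{N'}})$ vanishes.

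For items (2) and (3), I would use that $\tilde{p}$ is Galois, with deck group $G$ fitting into a short exact sequence $1\to\mathbb{Z}^2\to G\to\pi_1(N',y)\to 1$, where the kernel is generated by the fiberwise translations $\psi_1,\psi_2$. Lifting each $\gamma\in\pi_1(N',y)$ to an element $\varphi_\gamma\in G$ produces the claimed homomorphism into $G/\langle\psi_1,\psi_2\rangle$, and commutation with $\psi_1,\psi_2$ follows from the centrality of the kernel. For (3), the lift $\varphi_\gamma$ covers $z_1\mapsto\gamma\cdot z_1$ and is a fiber-preserving holomorphic automorphism of $\tilde{M'}=\tilde{N'}\times\mathbb{C}$, hence of the form $(z_1,z_2)\mapsto(\gamma\cdot z_1,a(z_1)z_2+f_\gamma(z_1))$ with $a\in\mathcal{O}(\tilde{N'})^\times$ and $f_\gamma\in\mathcal{O}(\tilde{N'})$. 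To descend to $M'$, the map $\varphi_\gamma$ must send the lattice $\mathbb{Z}+\tilde{\tau}(z_1)\mathbb{Z}$ onto $\mathbb{Z}+\tilde{\tau}(\gamma\cdot z_1)\mathbb{Z}$ pointwise; combined with the transformation law \eqref{rhoKodaira}, matching lattice generators shows that $a(z_1)\bigl(c_\gamma\tilde{\tau}(z_1)+d_\gamma\bigr)$ is a holomorphic function on $\tilde{N'}$ taking values in a discrete subset of $\mathbb{C}$, hence a constant $\mu_\gamma\in\mathbb{C}^*$, which gives the stated formula.

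For item (4), I would apply \autoref{quotientconnection}(1) with $q=\tilde{p}$, $\mathcal{E}=T\tilde{M'}$, and the natural $G$-linearization induced by $d\tilde{p}$. Since $\tilde{p}$ is unramified and $G$ acts freely on $\tilde{M'}$, the lemma produces the announced bijection for holomorphic connections; to upgrade it to meromorphic affine connections, one restricts to the complement of the pole divisor, applies the same argument there, and observes that a $G$-invariant effective divisor in $\tilde{M'}$ is precisely the pullback of an effective divisor on $M'$. The main obstacle will be the identification of the multiplicative constant $\mu_\gamma$ and of the fiber factor $(c_\gamma\tilde{\tau}(z_1)+d_\gamma)^{-1}$ in (3): one must carefully track the action of the matrix $\rho(\gamma)\in SL_2(\mathbb{Z})$ on the generators $1$ and $\tilde{\tau}(z_1)$ of the fiber lattice, and then absorb the residual global rescaling into the single constant $\mu_\gamma$; the remaining ingredients are standard facts on Stein Riemann surfaces and on Galois descent of quasi-coherent sheaves.
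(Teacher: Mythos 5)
Your overall strategy coincides with the paper's: item (1) by uniformization of the non-compact curve $N'$, items (2)--(3) by analysing the deck group of $\tilde{p}$, and item (4) by descent through \autoref{quotientconnection}. For item (3) your route is in fact slightly more direct than the paper's: you work on $\tilde{M'}=\tilde{N'}\times\mathbb{C}$, note that a fibre-preserving automorphism covering $\gamma$ is fibrewise affine, $(z_1,z_2)\mapsto(\gamma\cdot z_1,a(z_1)z_2+f_\gamma(z_1))$, and argue that compatibility of the lattices via \eqref{rhoKodaira} forces $a(z_1)\bigl(c_\gamma\tilde{\tau}(z_1)+d_\gamma\bigr)$ to take values in the (finite) unit group of the lattice $\mathbb{Z}+\tilde{\tau}(\gamma\cdot z_1)\mathbb{Z}$, hence to be constant. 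The paper instead passes through the intermediate cover $\overline{M'}=\tilde{N'}\underset{N'}{\times}M'$, uses the extension structure of $\mathcal{A}(\overline{M'}_{\tilde{y}})$ by translations, and needs $H^1(\tilde{N'},\mathcal{O}_{\tilde{N'}})=0$ a second time to lift the translation part $\overline{f}_\gamma$ to a function $f_\gamma$; your version avoids that extra cohomological step. Items (1) and (4) are essentially identical to the paper's arguments (in (4) the paper factors the descent through $\overline{M'}$ and uses \autoref{lemmapullback} for the converse inclusion, but the content is the same as your direct application of \autoref{quotientconnection} away from the $G$-invariant polar divisor).

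There is, however, one step in item (2) that fails as written: the kernel $\langle\psi_1,\psi_2\rangle\simeq\mathbb{Z}^2$ of your extension $1\to\mathbb{Z}^2\to G\to\pi_1(N',y)\to 1$ is \emph{not} central in $G$ in general. Conjugation by $\varphi_\gamma$ acts on this lattice subgroup through the homological invariant $\rho(\gamma)\in SL_2(\mathbb{Z})$; concretely, with the formula \eqref{universalautomorphisms} one computes that $\varphi_\gamma\circ\psi_1\circ\varphi_\gamma^{-1}$ is the fibrewise translation by $\mu_\gamma\bigl(c_\gamma\tilde{\tau}(\gamma^{-1}\cdot z_1)+d_\gamma\bigr)^{-1}$, which equals $\psi_1$ only when that quantity is identically $1$. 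So ``centrality of the kernel'' cannot be invoked. What your argument actually needs in order for $\gamma\mapsto\varphi_\gamma \bmod \langle\psi_1,\psi_2\rangle$ to be a well-defined homomorphism is only that each $\varphi_\gamma$ \emph{normalizes} $\langle\psi_1,\psi_2\rangle$, and this does hold, for the reason the paper gives: $\varphi_\gamma$ is a lift of an automorphism of the intermediate quotient $\overline{M'}=\tilde{M'}/\langle\psi_1,\psi_2\rangle$, hence conjugation by it preserves the deck group of $\tilde{M'}\to\overline{M'}$. You should replace the centrality claim by this normalization argument; the rest of your item (2) (existence of the lifts and the homomorphism property in the quotient) then goes through.
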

 \begin{proof}
     \begin{enumerate}
         \item In either case, $\tilde{N'}$ is isomorphic to $ \mathbb{P}^1,\mathbb{C}$ or $\mathbb{H}$. If $\tilde{N'}=\mathbb{P}^1$, then $N'=\tilde{N'}=\mathbb{P}^1$ since $N$ is compact, so that $J=\emptyset$. The converse is clearly true. Now if $J\neq \emptyset$, necessarly $\tilde{N'}$ is not $\mathbb{P}^1$, whence the assertion.
         \item Consider the complex manifold $\overline{M'}=\tilde{N'}\underset{N'}{\times}M'$ fitting in the following diagram: \begin{equation}\label{intermediatecovering} \xymatrix{ \overline{M'}\ar[r]^{\overline{p'}} \ar[d]_{\overline{\pi}} & M' \ar[d]^{\pi} \\ \tilde{N'} \ar[r]^{\overline{p'}} & N'  
         } \end{equation} where $\overline{p'}$ is the projection on the second factor, $\overline{\pi}$ the projection on the first one.  Then $\tilde{M'}$ is clearly isomorphic to the universal cover of $\overline{M'}$.  We denote by $[\tilde{y},x]$ the class of $(\tilde{y},x)\in \tilde{N'}\times M'$ in $\overline{M'}$.
         
         For any $\gamma \in \pi_1(N',y)$ denote by $\overline{\varphi}_\gamma$ the corresponding automorphism of $\overline{p'}$. Then: \begin{equation} \overline{\varphi}_\gamma([\tilde{y},x])=[\overline{\varphi}_\gamma(\tilde{y}),x]\end{equation} define an automorphism $\overline{\varphi}_\gamma$ of $\overline{M'}$ compaptible with $\overline{p}$ and $\overline{\pi}$. Then $\overline{\varphi}_\gamma$ lifts to an unique automorphism $\varphi_\gamma$ of the total space of the universal cover $\tilde{M'}$. The two remaining assertions follow from the fact that $\overline{M'}$ is the quotient of $\tilde{M'}$ by the subgroup $\langle \psi_1,\psi_2\rangle$ of automorphisms, and the map $\gamma \mapsto \overline{\varphi}_\gamma$ is clearly a homomorphism from the definition of $\overline{\varphi}_\gamma$.
         \item Let $(z_1,z_2)$ be coordinates on $\tilde{M'}=\tilde{N'}\times \mathbb{C}$, and $\tilde{\tau}$ as in \autoref{localformelliptic}, 2. Consider again the complex manifold $\overline{M'}$ as in the proof of 2. 
         By the proof of \autoref{localformelliptic}, 1., we have a canonical isomorphism of elliptic fibrations $$\overline{M'}\overset{\sim}{\longrightarrow}\tilde{N'}\times \mathbb{C}/\langle \psi_1,\psi_2\rangle$$ where $\psi_1,\psi_2$ are the automorphism defined by: \begin{equation}\begin{array}{ccc}\psi_1(z_1,z_2)=(z_1,z_2+1) & \text{and} & \psi_2(z_1,z_2)=(z_1,z_2+\tilde{\tau}(z_1)) \end{array}\end{equation} Let $\overline{\varphi}_\gamma$ be the lifting to $\overline{M'}$ of the automorphism of $\tilde{N'}$ corresponding to $\gamma\in \pi_1(N',y)$ (recall that $\varphi_\gamma$ is the lifting of $\overline{\varphi}_\gamma$ to the universal cover $\tilde{M'}$ of $\overline{M'}$). Let $\tilde{y}\in \tilde{N'}$ with $\overline{p'}(\tilde{y})=y$, and $z_1=z_1(\tilde{y})$. Then, by \eqref{rhoKodaira} and the above biholomorphism, we get that the multiplication by $\frac{1}{c_\gamma \tilde{\tau}(z_1)+d_\gamma}\in \mathbb{C}^\times$ induces a biholomorphism between the fibers $\overline{M'}_{\tilde{y}}$ and $\overline{M'}_{\gamma\cdot \tilde{y}}$.
         
         Now, recall that the automorphisms $\mathcal{A}(\overline{M'}_{\tilde{y}})$ are described by the exact sequence: $$\xymatrix{ 0\ar[r]& \overline{M'}_{\tilde{y}} \ar[r]&\mathcal{A}(\overline{M'}_{\tilde{y}}) \ar[r]& \mathbb{Z}/n_{\tilde{y}} \mathbb{Z} \ar[r]&0 }$$ where $\mathbb{Z}/n_{\tilde{y}}\mathbb{Z}$  corresponds to complex multiplications by a $n_{\tilde{y}}$-th root of the unity  inducing an involution on the elliptic curve ($n_{\tilde{y}}\leq 6$) and $\overline{M'}_{\tilde{y}}$ is identified as the subgroup of translations on itself.   

         Since $\overline{\varphi}_\gamma$ is an automorphism of the elliptic fibration $\overline{\pi}:\overline{M'}\longrightarrow \tilde{N'}$, the above remarks imply the existence of a holomorphic function $\mu_\gamma$ on $\tilde{N'}$, and a holomorphic section $\overline{f}_\gamma$ of $\overline{\pi}$ such that: $$\forall z_1\in \tilde{N'},\, \forall z_2\in \mathbb{C},\, \overline{\varphi}_\gamma([z_1,z_2])=[\gamma \cdot z_1, \frac{\mu_\gamma(z_1)}{c_\gamma\tilde{\tau}(z_1)+d_\gamma}z_2] + \overline{f}_\gamma(z_1)$$
         In particular, $\mu_\gamma$ is a constant, and since $\check{H}^1(\tilde{N'},\mathcal{O}_{\tilde{N'}})=\{0\}$, $\overline{f}_\gamma$ lifts to a section of $\tilde{\pi}:\tilde{M'}\longrightarrow \tilde{N'}$, that is a holomorphic function $f_\gamma$ on $\tilde{N'}$. Then $\varphi_\gamma$ is exactly the automorphism described in  the statement.
 
  \item If $\tilde{\nabla}$ is a meromorphic affine connection on $\tilde{M'}$ satisfying \eqref{invarianceuniversal}, then in particular it is invariant through the Galois group of the universal covering $\tilde{\overline{p}}:\tilde{M'}\longrightarrow \overline{M'}$ as in \eqref{intermediatecovering}. Thus, using \autoref{quotientconnection}, we have $\tilde{\nabla}=\tilde{\overline{p}}^\star \overline{\nabla}$ for some meromorphic affine connection on $\overline{M'}$. Since the automorphisms $\varphi_\gamma$ are the lifts of the elements $\overline{\varphi}_\gamma$ of the Galois group of the covering $\overline{p'}$, we also have $\overline{\nabla}=\overline{p'}^\star \nabla$ for some meromorphic affine connection on $M'$, that is $\tilde{\nabla}=\tilde{p}^\star \nabla$.

  Reciprocally, suppose that $\tilde{\nabla}=\tilde{p}^\star \nabla$ for some meromorphic affine connection $\nabla$ on $M'$. Then applying \autoref{lemmapullback} to the lifts $\psi_1,\psi_2$ and $\varphi_\gamma$ of the identity of $M'$ gives \eqref{invarianceuniversal}.
   \end{enumerate} 
  \end{proof}

 We will therefore use these well-known following facts about elliptic surfaces, due to Kodaira \cite{KodairaCI}. First, recall that given divisors $D_1,D_2$ on a complex compact surface $M$, there is a well defined  \textit{intersection number}: $$D_1\cdot D_2 := c_1(D_1)c_1(D_2)$$ where $c_1(D) \in H^1(M,\mathbb{Z})$ stands for the  first Chern class of the line bundle $\mathcal{O}_M(D)$. An \textit{exceptional curve} is then a rational smooth curve $C$ in $M$ such that $C\cdot C=-1$. 

 \begin{theorem}\label{theoremKodaira} Let $N$ be a smooth complex curve, $\mathcal{J}:\tilde{N}\longrightarrow SL_2(\mathbb{Z})\backslash \mathbb{H}$ and $\mathcal{G}$ a sheaf of subgroups of $SL_2(\mathbb{Z})$ as above. Then:
 \begin{enumerate}
     \item There exists a unique (up to biholomorphisms of elliptic surfaces) elliptic surface $B\overset{\pi_0}{\longrightarrow}N$ with invariants $\mathcal{J},\mathcal{G}$ and a global holomorphic section, called the \textit{basic member}.
     \item Any \underline{minimal} elliptic surface $M'\overset{\pi'}{\longrightarrow}N$ with invariants $\mathcal{J},\mathcal{G}$ and no multiple singular fiber is locally isomorphic to $B$.
 
 \end{enumerate} \end{theorem}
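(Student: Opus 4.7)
The plan is to follow Kodaira's original construction in \cite{KodairaCI}. First, I would build the restriction $B|_{N'}$ of the basic member over the complement $N'\subset N$ of the singular points. By \autoref{coordinatesKodaira}, the universal cover $\tilde{N'}\times\mathbb{C}$ carries canonical automorphisms $\psi_1,\psi_2$ built from the lift $\tilde{\tau}$ of $\mathcal{J}$, and for each $\gamma\in\pi_1(N',y)$ the homological invariant $\mathcal{G}$ prescribes the linear part $\rho(\gamma)\in SL_2(\mathbb{Z})$ of a candidate lift of the form \eqref{universalautomorphisms}. To single out the basic member I would make the canonical choice $\mu_\gamma=1$ and $f_\gamma\equiv 0$ for every $\gamma$; a direct computation using the cocycle identity $\tilde{\tau}(\gamma'\cdot z_1) = \rho(\gamma')\cdot\tilde{\tau}(z_1)$ shows that $\varphi_\gamma\circ\varphi_{\gamma'} = \varphi_{\gamma\gamma'}$ on the nose, so $\langle\psi_1,\psi_2,(\varphi_\gamma)_\gamma\rangle$ is a genuine group of automorphisms of $\tilde{N'}\times\mathbb{C}$, its action is free and properly discontinuous, and the image of $\tilde{N'}\times\{0\}$ descends to a tautological holomorphic section of $\pi_0|_{N'}$.

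The next step is to extend $B|_{N'}$ across each singular point $y_\beta\in N\setminus N'$. Here I would invoke Kodaira's classification of singular fibers: the conjugacy class of the local monodromy of $\mathcal{G}$ around $y_\beta$, together with the behavior of $\mathcal{J}$ at $y_\beta$, determines a unique Kodaira type, and for each type there is a canonical analytic completion over a small disc admitting a distinguished holomorphic section. Gluing these local models to $B|_{N'}$ yields the basic member $B$ together with its global section. Uniqueness up to biholomorphism of elliptic surfaces then follows because the data $(\mathcal{J},\mathcal{G})$ plus the requirement of a section completely pin down both $B|_{N'}$ (by the construction above) and each singular fiber (by Kodaira's list).

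For part (2), let $M'\overset{\pi'}{\longrightarrow}N$ be minimal with invariants $(\mathcal{J},\mathcal{G})$ and no multiple singular fibers, and fix $y_0\in N$. On a sufficiently small open disc $\overline{U}\subset N$ around $y_0$, the absence of multiple fibers guarantees a local holomorphic section of $\pi'$ over $\overline{U}$ (every non-multiple type in Kodaira's list meets such a section), and minimality ensures there are no further exceptional curves to contract fiberwise. Choosing this section as a zero section identifies $(\pi')^{-1}(\overline{U})$ with the elliptic surface produced by applying the construction of the first two paragraphs to the restriction of $(\mathcal{J},\mathcal{G})$ to $\overline{U}$, that is with $\pi_0^{-1}(\overline{U})$; this produces the required local biholomorphism to $B$.

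The main obstacle is the second step, namely Kodaira's classification of singular fibers of an elliptic fibration by their local monodromy in $SL_2(\mathbb{Z})$. Enumerating the possible analytic completions across a puncture of the base, verifying exhaustiveness of the list, and matching each completion with its monodromy invariant is a substantial piece of independent work that occupies the bulk of \cite{KodairaCI}; the remaining pieces of the proof of \autoref{theoremKodaira} are essentially bookkeeping on top of this classification.
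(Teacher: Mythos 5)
Your proposal is essentially a reconstruction of Kodaira's own argument, whereas the paper does not reprove anything here: its entire proof of \autoref{theoremKodaira} is the single sentence that the statement follows from Theorem 10.1 of \cite{KodairaCII}. So the comparison is really between your sketch and Kodaira's original work rather than between two arguments in the paper. Your outline is faithful to how Kodaira proceeds: over $N'$ the normalization $\mu_\gamma=1$, $f_\gamma\equiv 0$ does produce a group of automorphisms (the cocycle identity for the automorphy factor $(c_\gamma\tilde{\tau}+d_\gamma)^{-1}$ is exactly what makes $\varphi_\gamma\circ\varphi_{\gamma'}=\varphi_{\gamma\gamma'}$ hold), the zero section descends, and the completion across the $y_\beta$ together with the local identification of any minimal, multiple-fiber-free surface with the basic member is precisely the content of the singular-fiber classification. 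What your version buys is visibility of the mechanism; what it does not buy is an actual proof, since, as you candidly note, the two load-bearing steps --- exhaustiveness of the list of singular fibers attached to a local monodromy class, and the local uniqueness of the minimal completion with a section --- are still imported wholesale from \cite{KodairaCI} and \cite{KodairaCII}. Given that, the paper's choice to cite Theorem 10.1 directly is the honest one; if you wanted your write-up to stand on its own you would also need to justify the gluing step (that the local models over punctured discs match $B|_{N'}$ compatibly with the chosen sections), which you currently pass over in one clause. As a sketch of the provenance of the theorem your proposal is accurate; as a self-contained proof it is not, and neither is the paper's.
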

 \begin{proof}
This immediately follows from Theorem 10.1 in \cite{KodairaCII}.

 \end{proof}

\subsection{Minimal model for meromorphic affine complex surface with algebraic dimension one}
By a well-known result of Grauert, if $C$ is such a curve, then there exists a complex compact surface $M_1$, and $x_1\in M_1$ such that $M$ is isomorphic to the \textit{blow-up} at $x_0$ of $M$:   $$\sigma: M\longrightarrow M_1$$ and $\sigma(C)=\{x_1\}$. In this case $a(M_1)=a(M)$, the restriction of $\sigma$ to $M\setminus C$ is an isomorphism onto $M_1\setminus\{x_1\}$ and $\sigma$ maps any fiber of the algebraic reduction of $M$ to a fiber of the algebraic reduction of $M_1$. Given a complex compact surface $M$, there is a finite number of exceptional curve, and thus composing the maps $\sigma$ obtained as above we get a map: $$\sigma_0: M\longrightarrow M_0$$ which restricts as an isomorphism between $M\setminus C_0$, where $C_0$ is the union of the exceptional curves, and $M_0\setminus \{x_0^1,\ldots,x_0^n\}$ where the $x_0^i$ are points. Again $a(M_0)=a(M)$ and $M_0$ will be called the \textit{minimal model} of $M$.

In particular: 

\begin{lemma}\label{minimalmodel} Let $M$ be a complex compact surface endowed with a meromorphic affine connection $\nabla$. Suppose that $M$ contains an exceptional curve and let $\sigma:M\longrightarrow M_0$ the minimal model of $M$. If $a(M)=1$, then there exists a meromorphic affine connection $\nabla_0$ on $M_0$ such that $\nabla=\sigma_0^\star \sigma_0$. \end{lemma}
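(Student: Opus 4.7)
The plan is to reduce the descent of $\nabla$ along $\sigma_0$ to a finite sequence of elementary blow-downs and then to settle each elementary step by a Hartogs-type extension across a single point. First, I would factor $\sigma_0$ as a composition of single blow-downs $M = \tilde{M}_k \longrightarrow \tilde{M}_{k-1} \longrightarrow \cdots \longrightarrow \tilde{M}_0 = M_0$, each contracting one exceptional curve $E_i$ to a point $x_i \in \tilde{M}_{i-1}$. Since the algebraic dimension and the elliptic fibration are preserved by such a contraction (as recalled in the paragraph preceding the statement), every intermediate surface still has algebraic dimension one, and an induction on the number of blow-downs reduces the lemma to the following single-step claim: for $\sigma: \tilde{M} \longrightarrow \tilde{M}'$ the contraction of one exceptional curve $E$ to a point $x$, every meromorphic affine connection $\nabla$ on $\tilde{M}$ with $a(\tilde{M}) = 1$ descends to a meromorphic affine connection $\nabla'$ on $\tilde{M}'$.

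For this elementary step, $\sigma$ restricts to a biholomorphism $\tilde{M} \setminus E \overset{\sim}{\longrightarrow} \tilde{M}' \setminus \{x\}$, and applying \autoref{pullback} to its inverse immediately produces a meromorphic affine connection $\nabla'^{\circ}$ on $\tilde{M}' \setminus \{x\}$ whose pullback under $\sigma$ agrees with $\nabla$ on $\tilde{M} \setminus E$. The real content is the extension of $\nabla'^{\circ}$ across the single point $x$ as a meromorphic affine connection with a controlled pole divisor. To control this pole divisor I invoke \autoref{invariance}: the pole divisor $D$ of $\nabla$ has the form $\pi^* C$ for some divisor $C$ on $N$, and since the algebraic reduction factors through $\sigma$ as some $\pi': \tilde{M}' \longrightarrow N$, the pole divisor of $\nabla'^{\circ}$ on $\tilde{M}' \setminus \{x\}$ is the restriction of the honest divisor $D' := \pi'^{\,*} C$.

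The remaining task is purely local. On any polydisk neighborhood $V$ of $x$ in $\tilde{M}'$ with coordinates $(u, v)$, the connection $\nabla'^{\circ}$ is encoded by Christoffel symbols $\Gamma^k_{ij}$ which are meromorphic functions on $V \setminus \{x\}$ with poles contained in $D' \cap V$. Since $\{x\}$ has codimension two in $V$, Levi's extension theorem extends each $\Gamma^k_{ij}$ uniquely to a meromorphic function on $V$ with poles still contained in $D' \cap V$, and these extended symbols assemble into a meromorphic affine connection on $V$. Uniqueness of the extension makes the gluing over coordinate charts automatic and yields a global meromorphic affine connection $\nabla'$ on $\tilde{M}'$; the identity $\sigma^\star \nabla' = \nabla$ holds on the dense open $\tilde{M} \setminus E$ and hence on all of $\tilde{M}$. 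Iterating along the factorization of $\sigma_0$ produces the desired $\nabla_0$. The main subtlety I expect is the second step: without \autoref{invariance} the closure of the pole divisor of $\nabla'^{\circ}$ across the blown-down point is not a priori an honest divisor on $\tilde{M}'$, and the extended connection could acquire spurious poles through $x$ that do not descend from the algebraic reduction; it is precisely the hypothesis $a(M) = 1$ that rules this out.
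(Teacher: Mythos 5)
Your proposal is correct, but the key extension step is handled by a genuinely different mechanism than in the paper. Both arguments begin identically: transport $\nabla$ through the biholomorphism off the exceptional locus to get a connection on the complement of finitely many points of $M_0$, and reduce to extending the Christoffel symbols across each such point. Where you invoke Levi's extension theorem for meromorphic functions across an analytic subset of codimension $\geq 2$ (plus Remmert--Stein to see the polar locus closes up to a divisor), the paper instead exploits the elliptic fibration: it pulls the connection back to the local universal cover $\overline{U}\times\mathbb{C}$ of a nearby fibered neighborhood, uses \autoref{invariance} to place the poles on fibers of $z_1$, and then uses invariance under the lattice translations $\psi_1,\psi_2$ together with Liouville to conclude that each Christoffel symbol is constant along the fibers, i.e.\ satisfies $\der{\Gamma^k_{ij}}{u_2}=0$, whence it visibly extends across the blown-down point as a meromorphic function of $u_1$ alone. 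Your route is shorter and more robust --- it would extend any meromorphic connection on a locally free sheaf across a codimension-two set, with no fibration needed --- while the paper's route yields the extra structural information (fiberwise constancy of the coefficients) that is reused in the later classification sections. One small correction to your closing remark: the hypothesis $a(M)=1$ is not what prevents spurious poles through $x$ in your argument --- Levi and Remmert--Stein already guarantee that the extended symbols have polar set equal to the closure of the original one, which is an honest divisor --- so your appeal to \autoref{invariance} is a convenience rather than a necessity; in the paper's proof, by contrast, \autoref{invariance} and the fibration are genuinely load-bearing.
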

\begin{proof} First, using the inverse of the restriction of $\sigma_0$ to $M\setminus C_0$, we obtain a meromorphic affine connection $\nabla_0$ on $M_0\setminus \{x_0^1,\ldots,x_0^n\}$ such that $\sigma_0^\star \nabla_0$ is the restriction of $\nabla$ to $M\setminus C_0$.

It remains to prove that $\nabla_0$ extend across the codimension two subset $\{x_0^1,\ldots,x^n_0\}$. For, pick $i\in\{1,\ldots,n\}$. Let $(u_1,u_2)$ be coordinates on a neighborhood $U_0$ of $x^i_0$ such that the intersection of any fiber of the algebraic reduction $\pi_0$ with $U_0$ is a fiber of $u_1$. Using these coordinates, the matrix of $\nabla_0$ in $(\der{}{u_1},\der{}{u_2})$ has the form $$du_1 \otimes \Gamma^k_{1j}+ du_2 \otimes \Gamma^k_{2j}$$ where $\Gamma_{ij}^k$ are meromorphic functions on $U_0\setminus \{x^i_0\}$.

Let $x'_0\in \sigma_0^{-1}(x_0)$ and let $\overline{U}$ be an open neighborhood of $\pi(x'_0)$ constructed as in point 1. of \autoref{coordinatesKodaira}. We let $(z_1,z_2)$ be the corresponding coordinates obtained using a local trivialisation of the covering $\overline{U}\times \mathbb{C}\longrightarrow \pi^{-1}(\overline{U})$ on a neighborhood $U'_0$ of $x'_0$. From the fact that $\sigma_0$ preserves the fibers of the algebraic reductions, it is clear that for any meromorphic $f$ function on an open subset of $U_0$, we have $\der{f}{u_2}=0$ if and only if $\der{f\circ \sigma_0}{z_2}=0$. Now, the pullback $\tilde{\nabla}$ of $\nabla$ to the universal cover $\overline{U}\times \mathbb{C}$ of $\pi^{-1}(\overline{U})$ has matrix $$dz_1\otimes (\Gamma')^k_{1j} + dz_2 \otimes (\Gamma')^k_{2j}$$ for some meromorphic functions $(\Gamma')_{ij}^k$. Moreover, by \autoref{invariance}, the poles of these functions are fibers of $z_1$. The invariance of $\tilde{\nabla}$ through $\psi_1,\psi_2$ thus implies that the restriction of the $(\Gamma')_{ij}^k$ to a generic fiber of $z_1$ is an elliptic holomorphic function on $\mathbb{C}$, that is a constant. By the previous remark we get $\der{\Gamma_{ij}^k}{u_2}=0$. In particular, each $\Gamma^k_{ij}$ extends across $x^i_0$ as a meromorphic function on $U_0$. Therefore, for any $i\in \{1,\ldots,n\}$, $\nabla_0$ extends as a meromorphic affine connection on a neighborhood $U_0$ of $x^i_0$, as required.

\end{proof}

 \section{Reduction to the classification of principal elliptic surfaces}\label{section3}

By \autoref{minimalmodel}, we can restrict ourselves to the classification of minimal complex compact surface of algebraic dimension one endowed with meromorphic affine connections.
 
We now prove that the functional invariant of such a minimal elliptic surface is constant, hence this is a principal elliptic bundle up to a finite ramified covering.  This reduces the problem of the classification to the classification of meromorphic affine principal elliptic bundles and their quotients.

 \subsection{Reduction to isotrivial elliptic surfaces}
  Let $(M,D,\nabla)$ be a meromorphic affine complex compact surface with algebraic dimension one, and $M\overset{\pi}{\longrightarrow} N$ the corresponding elliptic surface. Our aim is to prove that this is an isotrivial surface, meaning the functional invariant $\tau$ is constant.
  
  Let $S$ (resp. $\overline{S}$) be the union of the singular fibers (resp. the singular points) in $M$ (resp. $N$). In view of \autoref{coordinatesKodaira},$(1)$, we can and do assume that $J\neq \emptyset$ or $N\neq \mathbb{P}^1$, otherwise $M$ is a Hopf surface, hence a principal elliptic fiber bundle over $N$. Let $M'=M\setminus S$ (resp. $N'=N\setminus \overline{S}$), $\tilde{M'}=\tilde{N'}\times \mathbb{C}\overset{\pi' = proj_1}{\longrightarrow} \tilde{N'}$  with adapted global coordinates $(z_1,z_2)$ as in \autoref{coordinatesKodaira}, $(1)$. This is the total space of the universal covering $p':\tilde{M'}\longrightarrow M'$.
  
  We let $\tilde{\nabla'}=p'^{\star}\nabla$, a meromorphic affine connection on $\tilde{M'}$ with poles $D'$ supported on fibers of $\pi'$ by \autoref{invariance}. Moreover, reemploying the notations from \autoref{coordinatesKodaira}, it is both invariant through the automorphisms: \begin{equation}\label{automorphismstori} \begin{array}{ccc}\psi_1(z_1,z_2)=(z_1,z_2+1) & \text{and} & \psi_2(z_1,z_2)=(z_1,z_2+\tau(z_1)) \end{array}\end{equation} and the automorphims $(\varphi_\gamma)_{\gamma \in \pi_1(N',y_0)}$ defined as in \eqref{universalautomorphisms}.

  Now we observe:
\begin{lemma}\label{lemmasemiperiodic} Let $g$ be a global holomorphic function on $\mathbb{C}$ with $g(z+1)=g(z)$ for any $z\in \mathbb{C}$. Suppose that there exists $\nu\in \mathbb{H}$ and $\mu \in \mathbb{C}$ such that \begin{equation}\label{tauaffine} \forall z\in \mathbb{C},\, g(z+\nu)=g(z)+\mu\end{equation} Then $g$ is constant and $\mu=0$. \end{lemma}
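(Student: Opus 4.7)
The plan is to reduce the problem to a standard fact about elliptic functions via differentiation.

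First, I would differentiate the two functional equations with respect to $z$. From $g(z+1)=g(z)$ I get $g'(z+1)=g'(z)$, and from $g(z+\nu)=g(z)+\mu$ I get $g'(z+\nu)=g'(z)$. Hence $g'$ is an entire function that is doubly periodic with periods $1$ and $\nu$.

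Next, because $\nu\in\mathbb{H}$ has strictly positive imaginary part, the vectors $1$ and $\nu$ are $\mathbb{R}$-linearly independent, so they generate a lattice $\Lambda=\mathbb{Z}\oplus\nu\mathbb{Z}\subset\mathbb{C}$. Therefore $g'$ descends to a holomorphic function on the compact complex torus $\mathbb{C}/\Lambda$. By the maximum principle (or equivalently by Liouville's theorem applied to $g'$ on $\mathbb{C}$, which is bounded because it factors through a compact space), $g'$ is constant; write $g'\equiv c$.

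Integrating gives $g(z)=cz+d$ for some $c,d\in\mathbb{C}$. The identity $g(z+1)=g(z)$ forces $c=0$, so $g\equiv d$ is constant. Finally, substituting into the second functional equation yields $\mu=g(z+\nu)-g(z)=0$, which completes the proof. There is no real obstacle here; the only point to be careful about is invoking the hypothesis $\nu\in\mathbb{H}$ (and not just $\nu\in\mathbb{C}^{\times}$) to ensure that $\{1,\nu\}$ generates a genuine lattice.
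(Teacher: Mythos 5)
Your proof is correct, but it follows a genuinely different route from the paper's. You differentiate both functional equations, observe that $g'$ is an entire function that is periodic with respect to the rank-two lattice $\mathbb{Z}\oplus\nu\mathbb{Z}$ (here is where $\nu\in\mathbb{H}$ enters, exactly as you note), and conclude by Liouville that $g'$ is a constant $c$; the $1$-periodicity of $g(z)=cz+d$ then kills $c$, and $\mu=0$ follows by substitution. The paper instead exploits only the $1$-periodicity to write $g(z)=\overline{g}(e^{2i\pi z})$ for a holomorphic function $\overline{g}$ on $\mathbb{C}\setminus\{0\}$, translates the second relation into $\overline{g}(\lambda u)=\overline{g}(u)+\mu$ with $\lambda=e^{2i\pi\nu}$ of modulus strictly less than $1$, differentiates once to get $\overline{g}'(\lambda u)=\overline{g}'(u)/\lambda$, and compares Laurent coefficients: since $|\lambda|<1$, no power $\lambda^{n+1}$ equals $1$, so every coefficient of $\overline{g}'$ vanishes. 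Your argument is the more elementary of the two (Liouville on a compact torus versus a Laurent-series rigidity argument on the punctured plane), and it isolates cleanly the role of the hypothesis $\nu\in\mathbb{H}$; the paper's substitution $u=e^{2i\pi z}$ is the version that treats the two relations asymmetrically, using the honest period $1$ to descend and the ``affine period'' $\nu$ only through the contraction $|\lambda|<1$. Both arguments are complete and yield the same conclusion.
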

\begin{proof} The $1$-periodicity is equivalent to the existence of a holomorphic function on $\mathbb{C}\setminus \{0\}$ such that $g(z)=\overline{g}(e^{2i\pi z})$ for any $z\in \mathbb{C}$. 

Then the property \eqref{tauaffine} implies \begin{equation}\label{tauaffine2}\overline{g}(\lambda u)=\overline{g}(u)+\mu\end{equation} where $\lambda=e^{2i \nu}$ satisfies $0<|\lambda|<1$. Derivating this relation in $u$ implies $\overline{g}'(\lambda u) =\frac{\overline{g}'(u)}{\lambda}$. But $\overline{g}'$ is a Laurent series at $0$, with residue $0$. Since $|\lambda|<1$, $\lambda^{n-1}\neq \frac{1}{\lambda}$ except for $n=0$, and the identity \eqref{tauaffine2} implies $\overline{g}'=0$. Hence $\overline{g}$ is a constant, and the same holds for $g$. In particular $\mu=0$.
\end{proof}

\begin{corollary}\label{coefficientsinvariant} Let $(M,D,\nabla)$ be a meromorphic affine complex compact surface with algebraic dimension one, and let $(\tilde{M'},\tilde{D}',\tilde{\nabla}')$ be as above, with homological invariant $\tau$. Let  $(z_1,z_2)$ be adapted global coordinates as above and $(\der{}{z_1},\der{}{z_2})$ (resp. $(dz_1,dz_2)$) the corresponding trivialisation of $TM'$ (resp. $\Omega^1_{M'}$). Then either $\tau'=0$, or $\tilde{\nabla}'$ has matrix: \begin{equation}\label{matrixuniversalcomplement} \begin{array}{ccc}dz_1\otimes \begin{pmatrix} b(z_1)&0\\d(z_1)& c(z_1)\end{pmatrix} &+& dz_2\otimes \begin{pmatrix} 0&0\\a(z_1)&0 \end{pmatrix} \end{array} \end{equation} for some holomorphic functions $a,b,c,d$ on $z_1(\tilde{N'})\subset \mathbb{C}$.
\end{corollary}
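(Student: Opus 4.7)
The plan is to exploit \emph{only} the $\psi_1$- and $\psi_2$-invariance of $\tilde{\nabla}'$ (the $\varphi_\gamma$-invariance will be used later), and to peel the eight Christoffel symbols off one at a time via repeated applications of \autoref{lemmasemiperiodic}. Write $\tilde{\nabla}' \partial_j = \sum_i(\Gamma^i_{1j}\,dz_1 + \Gamma^i_{2j}\,dz_2)\otimes \partial_i$ with $\Gamma^k_{ij}$ meromorphic on $\tilde{M'}$. By \autoref{invariance} the polar divisor of $\tilde{\nabla}'$ is supported on fibers of $\mathrm{proj}_1$, so for generic $z_1\in z_1(\tilde{N'})$ each $z_2\mapsto \Gamma^k_{ij}(z_1,z_2)$ is entire. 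Since $d\psi_1=\mathrm{Id}$, the invariance $\psi_1^\star\tilde{\nabla}'=\tilde{\nabla}'$ is equivalent to $1$-periodicity of every $\Gamma^k_{ij}$ in $z_2$. Now $d\psi_2(\partial_1)=\partial_1+\tau'(z_1)\partial_2$ and $d\psi_2(\partial_2)=\partial_2$; expanding $\nabla_{d\psi_2(\partial_i)}(d\psi_2(\partial_j)) = d\psi_2(\nabla_{\partial_i}\partial_j)$ at $w=(z_1,z_2+\tau(z_1))$ and separating $\partial_1$/$\partial_2$ components produces eight scalar equations relating $\Gamma^k_{ij}(w)$ to $\Gamma^k_{ij}(z)$, with corrections polynomial in $\tau'(z_1)$ (and $\tau''(z_1)$ for the $(1,1)$ equation on $\partial_2$).

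The iteration proceeds in the order $(2,2)\to(1,2),(2,1)\to(1,1)$. The $(2,2)$-equation on $\partial_1$ reads $\Gamma^1_{22}(w)=\Gamma^1_{22}(z)$; together with $1$-periodicity and entirety in $z_2$, the $\mu=0$ case of \autoref{lemmasemiperiodic} forces $\Gamma^1_{22}$ to depend only on $z_1$. The $\partial_2$-part becomes $\Gamma^2_{22}(w)-\Gamma^2_{22}(z) = \tau'(z_1)\,\Gamma^1_{22}(z_1)$, to which the full force of \autoref{lemmasemiperiodic} (with $\nu=\tau(z_1)\in\mathbb{H}$ and constant-in-$z_2$ right-hand side) applies, giving simultaneously that $\Gamma^2_{22}$ is constant in $z_2$ and that $\tau'(z_1)\Gamma^1_{22}(z_1)\equiv 0$. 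If $\tau'\not\equiv 0$ (otherwise the first alternative of the statement holds), analyticity in $z_1$ gives $\Gamma^1_{22}\equiv 0$. The $(1,2)$ and $(2,1)$ equations, treated identically, yield that $\Gamma^1_{12}$ and $\Gamma^1_{21}$ depend only on $z_1$ and both equal $\Gamma^2_{22}$, and that $\Gamma^2_{12},\Gamma^2_{21}$ depend only on $z_1$. Finally the $(1,1)$-equation on $\partial_1$ reduces to $\Gamma^1_{11}(w)-\Gamma^1_{11}(z) = -2\tau'\,\Gamma^2_{22}(z_1)$, which via \autoref{lemmasemiperiodic} forces both $\Gamma^1_{11}$ to depend only on $z_1$ and $\Gamma^2_{22}\equiv 0$ (hence also $\Gamma^1_{12}=\Gamma^1_{21}=0$); its $\partial_2$-part then makes $\Gamma^2_{11}$ depend only on $z_1$, accompanied by an auxiliary relation $\tau'' = \tau'(\Gamma^1_{11}-\Gamma^2_{12}-\Gamma^2_{21})$ that is not needed for the present statement. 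Setting $b:=\Gamma^1_{11}$, $c:=\Gamma^2_{12}$, $d:=\Gamma^2_{11}$, $a:=\Gamma^2_{21}$ then reads off exactly \eqref{matrixuniversalcomplement}.

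The only real obstacle is bookkeeping: one must expand $\psi_2^\star\tilde{\nabla}'=\tilde{\nabla}'$ carefully and then peel the equations in the above order so that, at each step, the right-hand side fed into \autoref{lemmasemiperiodic} is already known to depend only on $z_1$. Conceptually the argument is uniform: each pullback relation reduces to $g(z_2+\tau(z_1))-g(z_2)=\mu(z_1)$ with $g$ entire and $1$-periodic in $z_2$, and \autoref{lemmasemiperiodic} simultaneously kills $\mu$ and forces $g$ to be independent of $z_2$, so that two conclusions (vanishing of an off-diagonal symbol, and $z_2$-independence of a diagonal one) are extracted from one and the same scalar equation.
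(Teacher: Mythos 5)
Your proposal is correct and follows essentially the same route as the paper: impose $\psi_1$-invariance to get $1$-periodicity along the fibers, then unfold $\psi_2^\star\tilde{\nabla}'=\tilde{\nabla}'$ into a triangular system of functional equations and apply \autoref{lemmasemiperiodic} repeatedly, each application simultaneously killing an off-diagonal coefficient and forcing $z_2$-independence of another. Your bookkeeping (ordering by the pairs $\nabla_{\partial_i}\partial_j$ rather than by the $dz_2$- and $dz_1$-halves of the matrix) and the resulting identifications $f_{12}=g_{11}=g_{22}$, then $g_{22}=0$, agree with the paper's computation.
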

\begin{proof} We assume $\tau'\neq 0$. Let $(f_{ij},g_{ij})_{i,j=1,2}$ be the meromorphic functions on $\tilde{M'}$ such that the matrix of $\tilde{\nabla}'$ in $(\der{}{z_1},\der{}{z_2})$ is: \begin{equation} \begin{array}{ccc}dz_1\otimes \begin{pmatrix} f_{11}&f_{12}\\f_{21}& f_{22}\end{pmatrix} &+& dz_2\otimes \begin{pmatrix} g_{11}&g_{12}\\g_{21}&g_{22} \end{pmatrix} \end{array} \end{equation} Recall that, given any automorphism $\psi$ of $\tilde{M'}$, the pullback $\psi^\star \tilde{\nabla}'$ is described by the following diagram:
\begin{equation}\label{pullbackpsi1}\xymatrix{ T\tilde{M'} \ar[rr]^{\psi^\star \tilde{\nabla}'} \ar[d]_{d\psi} & &\Omega^1_{\tilde{M'}}(\star \tilde{D})\otimes T\tilde{M'} \\ \psi^* T\tilde{M'} \ar[rr]_{\psi^* \tilde{\nabla}'} & &\psi^* \Omega^1_{\tilde{M'}}(\star \tilde{D}) \otimes \psi^* T\tilde{M'}\ar[u]_{d\psi^* \otimes d\psi^{-1}} }\end{equation} 
The differential of the automorphism $\psi_1$ from \eqref{automorphismstori} corresponds, through the trivialisation $(\der{}{z_1},\der{}{z_2})$, to the post-composition by $\psi_1$ of functions. In particular, by \autoref{invariance}, the invariance of  $\tilde{\nabla}'$ through $\psi_1$ implies that the restrictions of $(f_{ij},g_{ij})_{i,j=1,2}$ on a generic fiber of $\pi'$ are $1$-periodic holomorphic functions.
 
Now, using the diagram \eqref{pullbackpsi1}, the identity $\der{}{z_2}\lrcorner \psi_2^\star \tilde{\nabla}'=\der{}{z_2}\lrcorner \tilde{\nabla}'$ rewrites as: \begin{equation}\label{systeminvariance1}\left\{\begin{array}{lcl} g_{12}(z_1,z_2+\tau(z_1))&=& g_{12}(z_1,z_2) \\ g_{ii}(z_1,z_2+\tau(z_1))+(-1)^i \tau'(z_1)g_{12}(z_1,z_2+\tau(z_1))&=&g_{ii}(z_1,z_2) \\ g_{21}(z_1,z_2+\tau(z_1)) (z_1,z_2+\tau(z_1))&=&g_{21}(z_1,z_2) \\ + \tau'(z_1)(g_{11}-g_{22})(z_1,z_2+\tau(z_1)))\\ - (\tau'(z_1))^2g_{12}(z_1,z_2+\tau(z_1)) && \end{array} \right. \end{equation}

Since $\tau'\neq 0$,the first line implies that the restriction of $g_{12}$ to a generic fiber is a holomorphic elliptic function, that is a constant, i.e $g_{12}(z_1,z_2)=g_{12}(z_1)$.
Now, the second line and the previous fact show that the restriction of $g_{ii}$ to a generic fiber satisfies the conditions of \autoref{lemmasemiperiodic}, hence $g_{12}=0$ and $g_{ii}(z_1,z_2)=g_{ii}(z_1)$. This in turn implies, together with the third line, that the restriction of $g_{21}$ to a generic fiber satisfies the conditions in \autoref{lemmasemiperiodic}, so that $g_{11}(z_1)=g_{22}(z_1)$ and $g_{21}(z_1,z_2)=a(z_1)$.  

Now, we can rewrite similarly the system of functional equations corresponding to $\der{}{z_1}\lrcorner \psi_2^\star \nabla'=\der{}{z_1}\lrcorner \nabla'$, taking in account that $$d\psi_2^* (\psi_2^* \der{}{z_2})=\der{}{z_2}+\tau'(z_1)\der{}{z_1}$$ Since $g_{12}=0$, the first line will be indentical to the one of \eqref{systeminvariance1}, that is $f_{12}(z_1,z_2)=f_{12}(z_1)$. Then the second  line show that $f_{ii}$ satisfy conditions of \autoref{lemmasemiperiodic}, so that $f_{ii}(z_1,z_2)=f_{ii}(z_1)$ and $f_{12}-g_{11}=f_{12}+g_{22}=0$, while $g_{11}=g_{22}$ by the previous facts. Hence $g_{11}=g_{22}=0$. Finally, as before, the last line show that $f_{21}$ satisfy conditions of \autoref{lemmasemiperiodic}, i.e. $f_{21}(z_1,z_2)=f_{21}(z_1)$.

\end{proof}

\begin{theorem}\label{invariantsconstant}Any meromorphic affine complex compact surface of algebraic dimension one is an isotrivial elliptic surface. \end{theorem}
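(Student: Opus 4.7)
I would argue by contradiction: assume that $\tilde{\tau}$ is non-constant on $\tilde{N'}$, i.e.\ $\tilde{\tau}'\not\equiv 0$, and derive a contradiction from the remaining invariance of $\tilde{\nabla}'$ under the automorphisms $\varphi_\gamma$ (the invariance under $\psi_1,\psi_2$ has already been used in Corollary~\ref{coefficientsinvariant}). By that corollary, on $\tilde{M'}=\tilde{N'}\times\mathbb{C}$ the pullback $\tilde{\nabla}'$ has the special block form with coefficients $a(z_1),b(z_1),c(z_1),d(z_1)$ depending only on $z_1$, and by \autoref{coordinatesKodaira}(4) it must additionally satisfy $\varphi_\gamma^{\star}\tilde{\nabla}'=\tilde{\nabla}'$ for every $\gamma\in\pi_1(N',y_0)$.

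I would then compute, using \eqref{universalautomorphisms}, the differential
$$d\varphi_\gamma=\begin{pmatrix}(\gamma\cdot z_1)'&0\\ \lambda_\gamma'(z_1)z_2+f_\gamma'(z_1)&\lambda_\gamma(z_1)\end{pmatrix},\qquad \lambda_\gamma(z_1)=\frac{\mu_\gamma}{c_\gamma\tilde{\tau}(z_1)+d_\gamma},$$
so that $\lambda_\gamma'(z_1)=-\mu_\gamma c_\gamma\tilde{\tau}'(z_1)/(c_\gamma\tilde{\tau}(z_1)+d_\gamma)^2$, which is nonzero precisely when $c_\gamma\neq 0$ (given $\tilde{\tau}'\not\equiv 0$). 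The decisive feature is the term $\lambda_\gamma'(z_1)z_2$: it is the only source of $z_2$-dependence in $d\varphi_\gamma$, and hence in $\varphi_\gamma^{\star}\tilde{\nabla}'$. Substituting into \eqref{pullbackpsi2} and identifying the result with $\tilde{\nabla}'$ entry by entry, the coefficients of $z_2^{0}$ and $z_2^{1}$ must match separately. The $z_2$-linear part, present only when $c_\gamma\neq 0$, forces relations among $a,b,c,d$ and the automorphy factors; combining these for two elements $\gamma,\gamma'$ whose monodromies $\rho(\gamma),\rho(\gamma')$ generate an infinite non-abelian subgroup of $SL_2(\mathbb{Z})$ forces $a\equiv b\equiv c\equiv d\equiv 0$, so that $\tilde{\nabla}'=d$ is the trivial flat connection.

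Finally I would exploit that the trivial connection is preserved by $\varphi_\gamma$ iff $\varphi_\gamma$ is affine on $\tilde{M'}$, which by computing the Hessian of the second coordinate gives $\lambda_\gamma'(z_1)\equiv 0$, and hence $c_\gamma\tilde{\tau}'(z_1)\equiv 0$, so $c_\gamma=0$ for every $\gamma$. Thus $\rho$ lands inside the upper-triangular parabolic subgroup $\Gamma_\infty\subset SL_2(\mathbb{Z})$; this is incompatible with the earlier use of an element with $c_\gamma\neq 0$, an element whose existence is guaranteed by the non-isotriviality assumption (a non-constant $\mathcal{J}:N\to SL_2(\mathbb{Z})\backslash\mathbb{H}$ on the compact base $N$ cannot lift through $\Gamma_\infty\backslash\mathbb{H}$, by the classification of homological invariants in \cite{KodairaCII}). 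This contradiction proves $\tilde{\tau}'\equiv 0$, hence $\tau$ is constant on $N'$ and the surface is isotrivial.

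The main obstacle I anticipate is the bookkeeping in the middle step: one must carefully separate the $z_2$-linear and $z_2$-constant parts of $\varphi_\gamma^{\star}\tilde{\nabla}'$ through \eqref{pullbackpsi2} and then leverage the interplay between the automorphy factor $\lambda_\gamma$ and $\tilde{\tau}'$ to conclude vanishing of all four coefficients. The last step — ruling out $\rho\subset\Gamma_\infty$ for a non-isotrivial fibration over a compact curve — is standard, but care is needed at the singular fibers $\overline{S}\subset N$.
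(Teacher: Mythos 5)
Your overall skeleton does match the paper's: start from the reduced matrix form of \autoref{coefficientsinvariant}, impose $\varphi_\gamma^\star\tilde{\nabla}'=\tilde{\nabla}'$, note that the only source of $z_2$-dependence is the entry $\delta_\gamma'(z_1)z_2$ of $d\varphi_\gamma$ with $\delta_\gamma'\propto c_\gamma\tilde{\tau}'$, separate powers of $z_2$, and finally exclude everywhere-parabolic monodromy using the relation $\prod_j[\gamma_{1,j},\gamma_{2,j}]=\prod_\alpha\gamma_\alpha$ together with the positivity of the local translation lengths at the poles of $J$ (Kodaira, Theorem 7.3). That last step, and your identification of the key mechanism, are sound.

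The gap is the middle step: the claim that matching $z_2$-linear parts for two elements with non-commuting monodromy forces $a\equiv b\equiv c\equiv d\equiv 0$. Because both the connection matrix \eqref{matrixuniversalcomplement} and $d\varphi_\gamma$ are lower triangular, the pullback $C^{-1}(\varphi_\gamma^*A)C+C^{-1}dC$ is again lower triangular and its only $z_2$-dependent entry is the $(2,1)$ one, whose coefficient of $z_2$ has the shape
\begin{equation*}
\frac{g_\gamma\delta'_\gamma}{\delta_\gamma}\,\bigl(a+c-b\bigr)(\gamma\cdot z_1)\;+\;\frac{\delta''_\gamma}{\delta_\gamma}-\frac{g'_\gamma\delta'_\gamma}{g_\gamma\delta_\gamma}.
\end{equation*}
This is a single scalar relation in which only the combination $a+c-b$ of the connection coefficients appears; $d$ does not occur at all, and the $z_2$-independent parts are the usual affine automorphy relations for $b,c,d$, which admit many non-trivial solutions (Sections 4--6 of the paper consist entirely of such solutions in the isotrivial case). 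So nothing forces $\tilde{\nabla}'$ to be the trivial connection, and your final step --- which needs $\tilde{\nabla}'=d$ as input in order to deduce that each $\varphi_\gamma$ is affine and hence $c_\gamma=0$ --- cannot be launched. There is also a circularity in the case analysis: the middle step presupposes two elements generating a non-abelian monodromy group, which is precisely what is unavailable in the residual abelian/parabolic case that your last step is meant to dispose of. The paper avoids both problems: from the top-degree coefficient in $z_2$ it extracts the constraint \eqref{equationdelta}, which involves \emph{only} the automorphy data $g_\gamma,\delta_\gamma$, and after precomposing $z_1$ with a suitable Möbius transformation so that $g'_{\gamma_\alpha}\neq 0$ it concludes directly that $c_{\gamma_\alpha}=0$ for every loop around a singular point (or else $\tau$ is constant); the connection coefficients are never required to vanish. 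To repair your argument you would need to replace the ``trivialize the connection'' step by an elimination of this kind, isolating a relation among $\delta'_\gamma,\delta''_\gamma,g_\gamma$ alone.
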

\begin{proof} We reemploy the above notations, and will describe explicitely the identity $$\der{}{z_1}\lrcorner \varphi_\gamma^\star \tilde{\nabla}'=\der{}{z_1}\lrcorner \tilde{\nabla}'$$ for any generator $\gamma$ of $\pi_1(N',y_0)$. 

First introduce the following notations. We let $g_\gamma$ be the function of $z_1$ corresponding to the matrix of the differential of the automorphism of $\tilde{N'}$ corresponding to $\gamma$. We also let: \begin{equation} \delta_\gamma(z_1)= \frac{\mu_\gamma}{c_\gamma \tau(z_1) + d_\gamma} \end{equation} where $\mu_\gamma,c_\gamma,d_\gamma$ are defined as in \autoref{coordinatesKodaira}.  

We first prove that the above identity implies $\delta'_\gamma(z_1)=0$. Indeed, the matrix of $d\varphi_\gamma$ in the basis $(\der{}{z_1},\der{}{z_2})$ and $(\varphi_\gamma^* \der{}{z_1},\varphi_\gamma^*\der{}{z_2})$ is: \begin{equation}\label{matrixdvarphi}C=  \begin{pmatrix}g_\gamma(z_1)&0\\z_2\delta'_\gamma(z_1)+f'_\gamma(z_1) &\delta_\gamma(z_1)\end{pmatrix} \end{equation}  In particular: \begin{equation} \der{}{z_1} \lrcorner C^{-1}dC = \begin{pmatrix} \frac{g'_\gamma(z_1)}{g_\gamma(z_1)}&0 \\ z_2(\frac{\delta''_\gamma(z_1)}{\delta(z_1)}-\frac{\delta'_\gamma(z_1)}{\delta(z_1)} \frac{g'_{\gamma}(z_1)}{g_\gamma(z_1)}) +h(z_1)&\frac{\delta'_\gamma(z_1)}{\delta(z_1)}\end{pmatrix} \end{equation} for some meromorphic function $h$ on $z_1(\tilde{N'})\subset \mathbb{C}$. Recalling the definition of the pullback (\autoref{pullback}), and focusing on the coefficient $c(z_1)$ of the matrix of $\tilde{\nabla}'$ (see \autoref{coefficientsinvariant}), the invariance by $\varphi_\gamma$ leads to: \begin{equation}  \begin{array}{ccc} z_2(\frac{\delta'_\gamma(z_1)}{\delta_\gamma(z_1)}-\frac{\delta'_\gamma(z_1)}{\delta_\gamma(z_1)}b(\gamma\cdot z_1) + \frac{\delta'_\gamma(z_1)}{\delta_\gamma(z_1)}c(\gamma \cdot z_1)) -z_2^2 (\frac{g'_\gamma(z_1)}{g_\gamma(z_1)}\delta''_\gamma(z_1)\frac{\delta'_\gamma(z_1)}{\delta_\gamma(z_1)}) + h(z_1)=c(z_1)  \end{array} \end{equation}  for some meromorphic function $h$ on $z_1(\tilde{N'})$. In particular: \begin{equation}\label{equationdelta} \begin{array}{cccc}\forall \gamma\in \pi_1(N',y_0),& \frac{g'_\gamma(z_1)}{g_\gamma(z_1)}\delta''_\gamma(z_1)\frac{\delta'_\gamma(z_1)}{\delta_\gamma(z_1)} &=&0 \end{array} \end{equation}

Now we fix a set $((\gamma_\alpha)_{\alpha \in I},(\gamma_{\epsilon,1},\ldots,\gamma_{\epsilon,2g})_{\epsilon=1,2})$ of generators for $\pi_1(N',y_0)$, where $\gamma_\alpha$ is obtained from a loop containing the singular point $y_\alpha$ in its bounded component, and $\gamma_1,\ldots,\gamma_{2g}$ span $\pi_1(N)$ ($g=g(N)$ is the genus). We prove that there are two possible cases:
\begin{enumerate}[label=\alph*)]
    \item $\tau$ is constant.
    \item For any $\alpha \in I$, $c_{\gamma_\alpha}=0$.
\end{enumerate}

Indeed, assume $\tau$ is not constant and pick $\alpha \in I$. Clearly, $c_{\gamma_\alpha}\neq 0$ implies that $\delta_{\gamma_\alpha}'' \delta'_{\gamma_\alpha} \neq 0$. Hence, in view of \eqref{equationdelta}, either $c_{\gamma_\alpha}=0$ or $g_{\gamma_\alpha}'=0$. Now, considering a suitable Möbius transformation $\phi$ in the connected component of $\mathbb{P}SL_2(\mathbb{R})$, and replacing the coordinate $z_1$ by $z'_1=\phi\circ z_1$, the equation \eqref{equationdelta} remains true when replacing $\tau$ by $\overline{\tau}=\tau\circ \phi^{-1}$, $g'_{\gamma_\alpha}$ 
 by the matrix of the differential of $\gamma_\alpha\cdot$ in the basis $\der{}{z'_1}$ which can be picked out non-zero, and keeping the same $c_{\gamma_\alpha}$ (since $\overline{\tau}$ have the same monodromy as $\tau$). Hence necessarly $c_{\gamma_\alpha}=0$.\\

 Now we prove that case b) can't happen in our situation. Indeed, it is known that there are generators $(\gamma_\alpha)_{\alpha\in I}$ and $(\gamma_{\epsilon,j})_{\substack{j=1,\ldots,g\\ \epsilon=1,2}}$ as above such that: \begin{equation}\label{fundamentalgroup} \underset{j=1}{\overset{g}{\prod}} [\gamma_{1,j},\gamma_{2,j}] = \underset{\alpha\in I}{\prod} \gamma_\alpha \end{equation} But property $b)$ means that the monodromy of $\tau$ is a representation $\rho$ with values in the abelian subgroup of translations in $SL_2(\mathbb{R})$. In particular, \eqref{fundamentalgroup} implies that the composition of the images $\rho(\gamma_\alpha)$ is trivial. 
 Moreover, following the proof of Theorem 7.3 in \cite{KodairaCII}, these are translations by $b_\alpha\geq 0$. From the previous remark, the sum of these positive integers is zero, so that $b_\alpha=0$, that is $A_{\gamma_\alpha}$ is the identity for any $\alpha \in I$.
 
  By Theorem 7.3 in \cite{KodairaCII}, this implies that the functional invariant $J$ have no pole on $N$. It is therefore a constant meromorphic function. As a consequence, $\tau$ is constant.

\end{proof}
 
 \subsection{Reduction to  elliptic fiber bundles}
 We now prove that the classification reduces to the one of meromorphic affine elliptic fiber bundles over complex curves, and their finite quotients.

From now, assume that $M\overset{\pi}{\longrightarrow} N$ is a \underline{minimal}  elliptic surface, of algebraic dimension one, with singular fibers $S=\pi^{-1}(\overline{S})$, endowed with a meromorphic affine connection $\nabla$ with poles $D$. By \autoref{invariantsconstant}, its invariants are constants. Since $N$ is a smooth compact complex curve, its is clear that there exists a finite   cover $\hat{N}\overset{\overline{q}}{\longrightarrow}N $, ramified at $\overline{S}$, such that the elliptic surface $\hat{M}\overset{\hat{\pi}}{\longrightarrow}\hat{N}$ obtained from the diagram: \begin{equation}\label{finitecover} \xymatrix{
\hat{M}=M\underset{N}{\times}\hat{N} \ar[r]^{q} \ar[d]_{\hat{\pi}} & M\ar[d]^{\pi} \\ \hat{N} \ar[r]_{\overline{q}} & N 
} \end{equation} where $q$ is the restriction of the first projection, and $\hat{\pi}$ is the restriction of the second projection, is an elliptic surface without multiple singular fiber.  Its invariants are respectively $\hat{J}=J\circ q$ and $q^* \mathcal{G}$, that is respectively a constant and a constant sheaf.

We get: \begin{theorem}\label{reductionprincipalelliptic} There is a surjective functor from the category of objects of the form $(\hat{M}\overset{\hat{\pi}}{\longrightarrow} \hat{N},\hat{\nabla},\Gamma)$ where: \begin{itemize}[label=$\bullet$]
\item $\hat{M}\overset{\hat{\pi}}{\longrightarrow}\hat{N}$ is a principal elliptic bundle of algebraic dimension one through its algebraic reduction.
\item $\hat{\nabla}$ is a meromorphic affine connection on $\hat{M}$, with pole $\hat{D}$,
\item $\Gamma$ is a finite group of automorphisms of the elliptic surface $\hat{M}\overset{\hat{\pi}}{\longrightarrow} \hat{N}$ and of the meromorphic affine connection $\hat{\nabla}$, with smooth quotient.
\end{itemize} to the category of \underline{minimal} meromorphic affine surfaces $(M\overset{\pi}{\longrightarrow}N,\nabla)$ with $a(M)=1$. This functor maps $(\hat{M}\overset{\hat{\pi}}{\longrightarrow}\hat{N},\hat{\nabla},\Gamma)$ to $(M,\nabla)$ where: \begin{itemize}\item $M=\Gamma\backslash \hat{M}$, $N=\overline{\Gamma}\backslash \hat{N}$ and $\overline{\Gamma}$ is the subgroup of finite automorphisms of $\hat{N}$ covered by an element of $\Gamma$, \item $\nabla$ is the meromorphic connection on $TM\ =(q_* T\hat{M} )^\Gamma$ with poles at $D=q(\hat{D})$, obtained by applying \autoref{quotientconnection} to the quotient map $q:\hat{M}\longrightarrow M$ corresponding to the action of $\Gamma$. \end{itemize}  Moreover, given any $(\hat{M},\hat{\nabla},\Gamma)$ in the first category, any $\gamma \in \Gamma$ lifts to the universal cover $\tilde{p}:\tilde{M}\longrightarrow \hat{M}$ as an automorphism: \begin{equation} \label{liftPsi}\tilde{\Psi}(z_1,z_2)=(\delta\cdot z_1, \mu z_2 + f_\delta(z_1) )\end{equation} where $\delta$ is the lift of an automorphism $\overline{\delta}\in Aut(\hat{N})$ to the universal covering $\tilde{N}$, $\mu\in \mathbb{C}^*$  and $f_\delta$ is a holomorphic function on $\tilde{N}$, and $(z_1,z_2)$ are coordinates on $\tilde{M}\subset \mathbb{C}^2$ as in \autoref{coordinatesKodaira}.  \end{theorem}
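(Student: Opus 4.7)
The plan is to split the proof into three parts: the well-definedness of the functor, its surjectivity, and the explicit form of the lifted automorphism.

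For the construction of the functor, I would start with a triple $(\hat{M}\to\hat{N},\hat\nabla,\Gamma)$ and form $M=\Gamma\backslash \hat M$, which is a smooth complex compact surface by hypothesis, together with the induced fibration $\pi:M\to N=\overline\Gamma\backslash\hat N$. The action of $\Gamma$ on $\hat M$ is a finite cover ramified on the fixed point locus, and by hypothesis $\hat\nabla$ is $\Gamma$-invariant; \autoref{quotientconnection}(2) then supplies a unique meromorphic connection $\nabla$ on $TM=(q_*T\hat M)^\Gamma$ with poles on $q(\hat D)$ and on the ramification locus, which is precisely a meromorphic affine connection. That $a(M)=1$ and $M$ is minimal follows from the fact that $a(\hat M)=1$ (by construction) and the projection $q$ is finite, so algebraic dimension is preserved. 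Functoriality on morphisms follows because the pullback $(f,df)^\star$ commutes with the quotient construction of \autoref{pullback}.

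For surjectivity, let $(M,\nabla)$ be minimal with $a(M)=1$ and algebraic reduction $\pi:M\to N$. By \autoref{invariantsconstant}, the functional invariant $J$ is constant, so the homological invariant factors through the finite group $\mathrm{Aut}(E,0)\subset SL_2(\mathbb{Z})$ of order at most $6$ (automorphisms of the elliptic fiber fixing the origin). I would choose a finite Galois cover $\bar q:\hat N\to N$ ramified at the singular points $y_\beta$ with index $m_\beta$ (to kill multiplicities, via \eqref{multiplicity}) and further ramified to trivialize the representation of $\pi_1(N')$ into $\mathrm{Aut}(E,0)$. Setting $\hat M=M\times_N\hat N$ with first projection $q$ and second projection $\hat\pi$, I would check that $\hat M\to\hat N$ is a minimal elliptic surface with no multiple fibers and trivial invariants; \autoref{theoremKodaira}(2) then identifies it locally with the basic member over $\hat N$, which carries a section, and the triviality of the homological invariant upgrades this to a principal elliptic bundle structure. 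Then $\hat\nabla:=q^\star\nabla$ is a meromorphic affine connection, and the Galois group $\Gamma$ of the cover $q$ acts on $\hat M$ preserving $\hat\pi$ and $\hat\nabla$ by \autoref{lemmapullback}, with smooth quotient $M$ by construction.

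For the lifting formula \eqref{liftPsi}, I would fix $\gamma\in\Gamma$ covering $\bar\delta\in\mathrm{Aut}(\hat N)$, lift $\bar\delta$ to $\delta\in\mathrm{Aut}(\tilde N)$, and apply \autoref{coordinatesKodaira}(3) to the automorphism of $\hat M$ given by $\gamma$. In that formula, $\delta_\gamma(z_1)=\mu_\gamma/(c_\gamma\tilde\tau(z_1)+d_\gamma)$; but for $\hat M$ the functional invariant $\tilde\tau$ is constant and the homological invariant is trivial (by construction of the cover $\hat N\to N$), so $c_\gamma=0$ and $d_\gamma=1$, whence $\delta_\gamma(z_1)=\mu\in\mathbb{C}^*$ is a genuine constant and the formula becomes exactly \eqref{liftPsi}.

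The main obstacle will be the second paragraph: concretely constructing the Galois cover $\bar q:\hat N\to N$ that simultaneously kills all multiple fibers and trivializes the homological invariant, and then arguing that the fiber product $\hat M=M\times_N\hat N$ is genuinely a \emph{principal} elliptic bundle rather than merely a smooth isotrivial elliptic surface. The delicate point is that $\hat N$ must be chosen so that the ramification indices match the $m_\beta$ (which forces a Riemann–Hurwitz-type existence argument, routine for $g(N)\geq 1$ but requiring care when $N=\mathbb{P}^1$ has few orbifold covers), and that once multiple fibers are removed and monodromy trivialized, the classification of \autoref{theoremKodaira} must be combined with the existence of a section to conclude that we have a principal bundle structure and not just local triviality up to automorphisms.
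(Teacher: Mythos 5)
Your proposal follows essentially the same route as the paper: quotient construction via \autoref{quotientconnection} for the functor, the fiber product \eqref{finitecover} plus Kodaira's classification (\autoref{theoremKodaira}) for surjectivity, and \autoref{coordinatesKodaira}(3) with constant $\tilde\tau$ and trivialized monodromy for the lifting formula \eqref{liftPsi}. You even correctly flag the step the paper itself treats most casually, namely the existence of the base cover $\overline{q}:\hat N\to N$ killing multiplicities and trivializing the homological invariant, and the upgrade from local triviality to a genuine principal bundle structure.

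The one place where your justification is genuinely too thin is the pair of minimality claims. In the functor direction, ``algebraic dimension is preserved'' does not give minimality of $M=\Gamma\backslash\hat M$: the actual argument is that an exceptional curve $C$ in $M$ is necessarily a fiber of $\pi$ (by \autoref{invariance}, every divisor on a surface with $a(M)=1$ is a union of fibers of the algebraic reduction), so $q^*C$ would be a smooth rational curve occurring as a fiber of $\hat\pi$, impossible for a principal elliptic bundle. In the surjectivity direction, applying \autoref{theoremKodaira}(2) requires $\hat M$ to be minimal, which you defer to ``I would check''; the paper proves it by showing an exceptional curve $\hat C\subset\hat M$ would have to be a singular fiber, whose image $C$ in $M$ satisfies $-1=\hat C\cdot\hat C=k\,C\cdot C$ for a positive integer $k$, forcing $C\cdot C=-1$ and contradicting minimality of $M$. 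Neither gap is fatal, but both need the specific arguments above rather than the one-line justifications you give.
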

\begin{proof}
We begin by proving the equivalence of categories. Suppose that $(\hat{M}\overset{\hat{\pi}}{\longrightarrow}\hat{N},\hat{\nabla},\Gamma)$ is an object and $(M,\nabla)$ its image as in the statement. First, we prove that $M$ is of algebraic dimension one. Indeed, suppose that $f$ is a meromorphic function on $M$. Then $\hat{f}=f\circ q$ is an element of $\hat{\pi}^\# \mathbb{C}(\hat{N})$. By definition of $M\overset{\pi}{\longrightarrow} N$, $f$ is thus an element of $\pi^\# \mathbb{C}(N)$. Also, $M$ is a minimal surface. Indeed, if $C$ is an exceptional curve, then $q^* C=\hat{C}$ is a smooth rational curve in $\hat{M}$, contained in a fiber of $\hat{\pi}$, which can't be a principal elliptic fiber bundle. Hence, the functor is well-defined on objects and extends as a functor for the obvious choice of arrows (namely $\Gamma$-equivariant isomorphisms of meromorphic affine connections and isomorphisms of meromorphic affine connections).

Now, if $(M,\nabla)$ is an object of the target category, then we define $\hat{M}\overset{\hat{\pi}}{\longrightarrow}\hat{N}$ as in \eqref{finitecover} and $\hat{\nabla}=q^\star \nabla$. We prove that $\hat{M}\overset{\hat{\pi}}{\longrightarrow}\hat{N}$ is a principal elliptic bundle. First, recall that the invariants are respectively a constant for the functional invariant and a constant sheaf for the homological invariant. The basic member $\hat{B}$ associated to these invariants (see \autoref{theoremKodaira}) is $\hat{B}=\hat{N}\times \mathbb{C}/\Lambda$ for some lattice $\Lambda$. 
Moreover, $\hat{M}$ is a minimal surface. Indeed, suppose the existence of an exceptional curve $\hat{C}$ in $\hat{M}$. Since the algebraic dimension of $\hat{M}$ is one, the proof of Theorem 4.2 in \cite{KodairaCI} implies that $\hat{C}$ is a singular fiber of $\hat{\pi}$. But then its image $C$ through $q$ would also be an exceptional curve. Indeed, the restriction of $q$ to the support of $\hat{C}$ is a biholomorphism, so $C$ is a smooth rational curve. It is also a singular fiber of the minimal elliptic surface $M$. By the proof of Theorem 6.2 in \cite{KodairaCII}, we must have $C\cdot C\leq -1$. In one other hand, since $q$ is a finite cover, we also have $$-1 =\hat{C}\cdot \hat{C} = deg(\mathcal{N}_{\hat{C}}) =k C\cdot C$$ for some positive integer $k$, hence $C\cdot C=-1$ contradicting the minimality of $M$. From the point 2. of the \autoref{theoremKodaira}, $\hat{M}$ is locally isomorphic, as an elliptic fibration, to $\hat{B}$. Hence, $\hat{M}$ has no singular fiber. Up to considering a finite cover, $\hat{M}\overset{\hat{\pi}}{\longrightarrow}\hat{N}$ is therefore a principal elliptic fiber bundle. 
Finally, setting $\Gamma$ as the (finite) group of automorphisms of $q$, we get an object mapping to $(M,\nabla)$. 

Now let's prove the formula \eqref{liftPsi}. Let $\Psi$ be an automorphism of $\hat{\pi}$. It covers an automorphism $\overline{\delta}\in Aut(\hat{N})$ and we define $\delta\in Aut(\tilde{N})$ as its lift to the universal covering $\tilde{\overline{p}}:\tilde{N}\longrightarrow \hat{N}$. By construction, any lift $\tilde{\Psi}$ of $\Psi$ to the universal covering $\tilde{M}$ is an automorphism of $\mathbb{C}$-principal bundle covering $\delta$.
 
Consider the covering $\overline{q}:\overline{M}\longrightarrow M$, where $\overline{M}=\hat{M}\underset{\hat{N}}{\times}\tilde{N}$, obtained as in the proof of \autoref{coordinatesKodaira}. Any element of $\overline{M}$ is of the form $[\hat{y},x]$ for some $x\in \hat{M}$ and $\hat{y}$ in $\hat{N}$. Recall that $\tilde{M}$ is also the universal covering of $\overline{M}$.  Moreover, $\Psi$ lifts canonically to an automorphism $\overline{\Psi}$ of $\overline{M}$ defined by: \begin{equation}\label{barPsi} \overline{\Psi}([\hat{y},x])=[\overline{\delta}(\hat{y}),\Psi(x)] \end{equation} Then the formula \eqref{liftPsi} follows as  in the proof of point 3. of \autoref{coordinatesKodaira}.

\end{proof}

 \section{Principal elliptic surfaces over projective line}\label{section4}
We begin to apply the strategy proposed in the last section, with the case $\hat{N}=\mathbb{P}^1$. We first describe the compact complex surfaces which are principal elliptic bundles of algebraic dimension one on $\hat{N}$, namely the Hopf surfaces with $a(\hat{M})=1$. This description is in terms of the universal cover $\tilde{M}=\mathbb{C}^2\setminus \{0\}$.  Then we classify meromorphic affine connections on $\hat{M}$ (\autoref{classificationHopf1}). Finally we prove that a non trivial finite group of $Aut(\hat{\pi})$ has no fixed curve (\autoref{quotientsHopf}), any minimal meromorphic affine surface $M$ with $a(M)=1$ arising from a Hopf surface through the construction \eqref{finitecover} is again a Hopf surface.

 \subsection{Hopf surfaces of algebraic dimension one}
Recall the following characterization of Hopf surfaces among elliptic surfaces (see for example \cite{BarthPeters}):

 \begin{theorem}\label{characterizationHopf} Let $M$ be a complex compact surface. Then the following assertions are equivalent: \begin{enumerate}\item $M$ is a Hopf surface with algebraic dimension one
 \item The universal cover of $M$ is $\mathbb{C}^2\setminus \{0\}$ and $M$ is of algebraic dimension one
 \item There exists $\lambda \in \mathbb{C}^*$ with $|\lambda|^k \rightarrow 0$, an integer $d\geq 1$ such that the following diagram commutes:
 \begin{equation}\xymatrix{ \mathbb{C}^2\setminus \{0\}\ar[d]_{p_1} \ar[r]^{\rho_d} & M \ar[d]^{\pi} \\\mathbb{P}^1 \ar@{=}[r] & \mathbb{P}^1 }\end{equation} where $\rho_d$ is the quotient map corresponding to the action of $\Gamma_d=\langle (z_1,z_2)\mapsto (\lambda z_1,\lambda^{\frac{1}{d}}z_2)\rangle$ on $\mathbb{C}^2\setminus \{0\}$ and $p_1$ is the bundle map for the tautological bundle of $\mathbb{P}^1$.

 \end{enumerate}

 The Hopf surface corresponding to some fixed $\lambda$ and $d\geq 1$ will be denoted by $M_d$. 
 \end{theorem}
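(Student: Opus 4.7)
The plan is to establish the three implications in turn, with the bulk of the work devoted to (2) $\Rightarrow$ (3), which relies on Kodaira's classification of surfaces of class VII${}_0$ combined with Hartogs-type extension.

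The equivalence (1) $\Leftrightarrow$ (2) is essentially the definition of a Hopf surface, namely a compact complex surface whose universal cover is biholomorphic to $\mathbb{C}^2 \setminus \{0\}$. Both conditions carry the same constraint $a(M)=1$, so there is nothing further to check.

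For (3) $\Rightarrow$ (1),(2), I would verify that the generator $\phi$ of $\Gamma_d$ is a contraction of $\mathbb{C}^2$ toward the origin, since its eigenvalues $\lambda$ and $\lambda^{1/d}$ both have modulus strictly less than one. Consequently $\Gamma_d$ acts freely and properly discontinuously on $\mathbb{C}^2\setminus\{0\}$, with an annular fundamental domain of the form $\{r\leq \max(|z_1|,|z_2|^d)\leq r/|\lambda|\}$, which is relatively compact. The quotient $M$ is then a compact complex surface whose universal cover is $\mathbb{C}^2\setminus\{0\}$, giving (1) and (2) simultaneously. The descent of $p_1$ through $\rho_d$ yields the elliptic fibration $\pi$ making the diagram commute; the resulting non-constant meromorphic function forces $a(M)\geq 1$, and since Hopf surfaces are not algebraic (they have $b_1=1$), we conclude $a(M)=1$.

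For (2) $\Rightarrow$ (3), set $\Gamma=\pi_1(M)$, so $M=(\mathbb{C}^2\setminus\{0\})/\Gamma$ with $\Gamma$ acting freely and properly discontinuously by biholomorphisms. Every element of $\Gamma$ extends by Hartogs to a biholomorphism of $\mathbb{C}^2$, and proper discontinuity forces $0$ to be a common fixed point. By Kodaira's results on surfaces of class VII${}_0$ (\cite{KodairaSI}), such a group is linearizable after a biholomorphism of $(\mathbb{C}^2,0)$, so we may assume $\Gamma\subset GL_2(\mathbb{C})$. The hypothesis $a(M)=1$ provides a non-constant meromorphic function on $M$, which lifts to a $\Gamma$-invariant meromorphic function on $\mathbb{C}^2\setminus\{0\}$; by Hartogs this is a rational function of $(z_1,z_2)$, and its level sets define the algebraic reduction. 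After a linear change of coordinates this fibration is compatible with the tautological bundle map, identifying the algebraic reduction with $p_1$.

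It remains to pin down the structure of $\Gamma$. Cocompactness of the $\Gamma$-action on each generic fiber, together with $p_1$-invariance, forces $\Gamma$ to contain a diagonal contraction, while freeness and discreteness then force $\Gamma$ to be infinite cyclic. Diagonalizing the primitive generator (the Jordan-block case is ruled out by the $p_1$-invariance needed for $a(M)=1$) and rescaling the coordinates produces the normal form $(z_1,z_2)\mapsto(\lambda z_1,\lambda^{1/d}z_2)$ with $|\lambda|<1$ and $d\geq 1$, where $d$ records the arithmetic relation between the two eigenvalues that is imposed by the existence of a $\Gamma$-invariant rational function of the form $z_1^p/z_2^q$. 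The main obstacle is the linearization step, which is not elementary and requires Kodaira's theorem; once $\Gamma\subset GL_2(\mathbb{C})$ is in hand, the remaining normal-form analysis is a direct exercise in the structure of discrete contraction subgroups of $GL_2(\mathbb{C})$.
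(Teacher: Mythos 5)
The paper does not actually prove this theorem: it is quoted from Barth--Peters with a reference, so there is no argument in the text to compare yours against. Judged on its own, your treatment of (1)$\Leftrightarrow$(2) and of (3)$\Rightarrow$(1),(2) is fine, but the implication (2)$\Rightarrow$(3) has two genuine gaps, and they sit exactly where the content of the statement lies.

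First, the linearization step is not what Kodaira's theorem gives. Kodaira's normal form for the contraction generating a primary Hopf surface is $(z_1,z_2)\mapsto(\alpha z_1+t z_2^m,\beta z_2)$ with $0<|\alpha|\leq|\beta|<1$ and $t(\alpha-\beta^m)=0$; when $t\neq 0$ (so $\alpha=\beta^m$) this germ is \emph{not} linearizable, and the corresponding Hopf surfaces exist. What excludes them here is precisely the hypothesis $a(M)=1$: a resonant Hopf surface carries a single irreducible curve and no non-constant meromorphic function, hence has algebraic dimension $0$. So the hypothesis must be invoked \emph{before} you may assume $\Gamma\subset GL_2(\mathbb{C})$, not afterwards, and your remark that the ``Jordan-block case is ruled out by $p_1$-invariance'' only covers $m=1$. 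Second, ``freeness and discreteness force $\Gamma$ to be infinite cyclic'' is false: secondary Hopf surfaces, whose fundamental group is a non-trivial finite extension of $\mathbb{Z}$, act freely and can have $a(M)=1$ (for instance the quotient of $H_{\lambda,\lambda}$ by $(z_1,z_2)\mapsto(-z_1,i z_2)$, on which $(z_1/z_2)^4$ descends). Moreover, even for cyclic $\Gamma$ generated by a diagonal contraction $(\alpha z_1,\beta z_2)$, the relation forced by $a(M)=1$ is only $\alpha^p=\beta^q$ with $p,q$ coprime, and this is of the announced shape $\beta=\lambda^{1/d}$, $\alpha=\lambda$ only when $p=1$; for coprime $p,q\geq 2$ one obtains a primary Hopf surface of algebraic dimension one (with two multiple fibres of its elliptic fibration) which is not isomorphic to any $M_d$. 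The ``direct exercise'' at the end therefore conceals the step where an extra input is needed --- namely that $\pi$ is required to be the descent of the tautological projection $p_1$, as the commutative diagram in (3) demands, or equivalently a more restrictive reading of what counts as a Hopf surface here. As written, your argument would prove an implication that these examples contradict, so the gap is not merely cosmetic.
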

 
 As an example, the original Hopf surface is $M_{1}$ for $\lambda=\frac{1}{2}$.

 \subsection{Meromorphic affine elliptic bundles over the projective line}
Let $\hat{M}$ be a principal elliptic bundle over $\hat{N}=\mathbb{P}^1$, with algebraic dimension one. By \autoref{characterizationHopf}, there exists $d\geq 1$ such that $\hat{M}=M_d$ using the previous notation. In particular, the field of meromorphic functions on $\hat{M}$ is \begin{equation}\label{fieldHopf}  \tilde{p}^\#\mathbb{C}(\hat{M})= \mathbb{C}(\frac{z_1}{z_2^d}) \end{equation} where $\tilde{p}:\tilde{M}\longrightarrow \hat{M}$ is the universal cover and $(z_1,z_2)$ are the canonical coordinates of $\mathbb{C}^2$ restricted to $\tilde{M}=\mathbb{C}^2\setminus \{0\}$. From now on we identify $\mathbb{C}(\hat{M})$ with $\mathbb{C}(X)$ through the above identification. 
\begin{theorem}\label{classificationHopf1} Let $\lambda \in \mathbb{C}^*$ and $(M_d)_{d\geq 1}$ be the corresponding Hopf surfaces of algebraic dimension one (see \autoref{characterizationHopf}). Denote by $\mathcal{A}$ the affine space $\mathcal{A}$ of meromorphic affine connections on $\hat{M}$. Then: \begin{enumerate}\item There is an isomorphism of affine spaces: \begin{equation}\label{bijectionHopf1}\begin{array}{ccc}\mathbb{C}(X)^4 \times \mathbb{C}(X)^4 &\overset{\sim}{\longrightarrow} &\mathcal{A} \\ (P_{ij}(X),Q_{ij}(X))_{i,j=1,2} & \mapsto & \rho(\tilde{\nabla}_{P_{ij},Q_{ij}}) \end{array} \end{equation} where $\rho$ is the map sending any meromorphic affine connection on $\tilde{M}$ invariant by the Galois group of $\tilde{p}$ to the meromorphic affine connection on $\hat{M}$ constructed in \autoref{quotientconnection}, and: $\tilde{\nabla}_{P_{ij},Q_{ij}}$ is the meromorphic affine connection on $\tilde{M}$ whose matrix in $(\der{}{z_1},\der{}{z_2})$ is: \begin{equation}\label{matrixHopf}dz_1 \otimes \begin{pmatrix} P_{11}(\frac{z_1}{z_2^d})\frac{z_2^d}{z_1^2} & P_{12}(\frac{z_1}{z_2^d})\frac{z_2^{d-1}}{z_1} \\ P_{21}(\frac{z_1}{z_2^d})\frac{z_2}{z_1^2} & P_{22}(\frac{z_1}{z_2^d})\frac{z_2^d}{z_1^2} \end{pmatrix} +dz_2\otimes \begin{pmatrix} Q_{11}(\frac{z_1}{z_2^d})\frac{z_2^{d-1}}{z_1}& Q_{12}(\frac{z_1}{z_2^d})\frac{z_2^d}{z_1} \\ Q_{21}(\frac{z_1}{z_2^d})\frac{z_2^d}{z_1^2}&Q_{22}(\frac{z_1}{z_2^d})\frac{z_2^{d-1}}{z_1}\end{pmatrix}\end{equation}
    \item In particular, there exists non-flat meromorphic affine connections on any Hopf surface of algebraic dimension one, and exactly one holomorphic affine connection on any such manifold, that is the standard affine structure.
\end{enumerate}
\end{theorem}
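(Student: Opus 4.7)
The plan is to analyse $\Gamma_d$-invariant meromorphic affine connections on the universal cover $\tilde{M}=\mathbb{C}^2\setminus\{0\}$ by weight theory for the generator $\psi:(z_1,z_2)\mapsto(\lambda z_1,\lambda^{1/d}z_2)$ of $\Gamma_d$, and then descend. Since $\Gamma_d$ acts freely and $\tilde{p}:\tilde{M}\to\hat{M}$ is an unramified Galois cover, the equivalence of categories recalled after \autoref{linearization}, combined with the obvious meromorphic adaptation of \autoref{quotientconnection}(1) (noting that the pole divisor upstairs is $\tilde{p}^{-1}$ of the one downstairs, hence $\Gamma_d$-invariant), provides an isomorphism of affine spaces between $\mathcal{A}$ and the space of $\Gamma_d$-invariant meromorphic affine connections on $\tilde{M}$. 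This is the inverse of $\rho$, so the problem reduces to an explicit description upstairs.

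Writing such an invariant $\tilde{\nabla}$ in the global frame $(\partial_{z_1},\partial_{z_2})$ as $dz_1\otimes F+dz_2\otimes G$ for $2\times 2$ meromorphic matrices $F,G$, the pullback formula of \autoref{pullback} with $\varphi=d\psi$ (a constant diagonal matrix) shows that invariance is equivalent to each entry being a $\psi^*$-eigenfunction with a specific eigenvalue $\lambda^{s}$, $s\in\tfrac{1}{d}\mathbb{Z}$, determined by the position in the matrix. By Hartogs' theorem every meromorphic function on $\tilde{M}$ extends to $\mathbb{C}^2$; using $\psi^*(z_1^a z_2^b)=\lambda^{a+b/d}z_1^a z_2^b$ and the hypothesis $|\lambda|<1$ (in particular $\lambda$ is not a root of unity), the eigenspace for $\lambda^s$ is a free rank-one $\mathbb{C}(X)$-module, generated by any single monomial $z_1^a z_2^b$ with $a+b/d=s$. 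Picking such a generator for each of the eight entries yields the parametrisation \eqref{matrixHopf} and so the isomorphism in (1).

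For (2), the curvature is a non-trivial polynomial function of $(P_{ij},Q_{ij})$, so non-flat elements of $\mathcal{A}$ exist. The uniqueness of the holomorphic one is obtained entry by entry: a coefficient of the shape $R(X)\cdot z_1^a z_2^b$ is holomorphic on $\tilde{M}=\mathbb{C}^2\setminus\{0\}$ only if $R$ vanishes at $X=0$ to an order that cancels the pole along $\{z_1=0\}$ and decays at $X=\infty$ to match the pole along $\{z_2=0\}$, and a direct inspection of the eight weights shows that these two conditions have no non-trivial solution in $\mathbb{C}(X)$, leaving only the zero-Christoffel connection --- the standard affine structure inherited from $\mathbb{C}^2$. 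The main obstacle I anticipate is this final case-by-case check, together with the combinatorial verification in (1) that each invariant eigenfunction is captured by exactly one monomial-with-$\mathbb{C}(X)$-coefficient model, i.e.\ that no spurious eigenvalue coincidences occur among the eight chosen monomials.
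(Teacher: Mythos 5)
Your part (1) is essentially the paper's own argument: the paper also writes the pullback connection in the frame $(\der{}{z_1},\der{}{z_2})$, derives the weight system \eqref{invariancenabla} for the generator of $\Gamma_d$, exhibits one monomial particular solution per entry, and divides to land in the field of invariant functions, identified with $\mathbb{C}(X)$ via \eqref{fieldHopf}. One caveat on your justification: Hartogs extension plus $|\lambda|<1$ gives the weight decomposition, hence rank-one-ness of the eigenspaces, only for \emph{holomorphic} eigenfunctions; for meromorphic ones the quotient-by-a-monomial step needs to know that the $\Gamma_d$-invariant meromorphic functions on $\tilde{M}$ are exactly $\tilde{p}^{\#}\mathbb{C}(\hat{M})=\mathbb{C}(z_1/z_2^d)$ — i.e.\ precisely \eqref{fieldHopf}, which is what the paper invokes. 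Your ``free rank-one $\mathbb{C}(X)$-module'' claim is that statement, not a consequence of Hartogs; cite it (or run a multiplier argument on a coprime representation $h=u/v$) and part (1) is sound and matches the paper.

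The genuine gap is in part (2), exactly at the case-by-case check you deferred. Carrying it out: seven of the eight weights are negative ($-1$ for $f_{11},f_{22},g_{21}$; $-1/d$ for $f_{12},g_{11},g_{22}$; $1/d-2$ for $f_{21}$), so those entries of a holomorphic invariant connection vanish, as you expect. But the entry $g_{12}=\Gamma^1_{22}$ obeys $\lambda^{2/d-1}g_{12}(\lambda z_1,\lambda^{1/d}z_2)=g_{12}(z_1,z_2)$ (fifth line of \eqref{invariancenabla}), so its monomial weight is $a+b/d=1-2/d$, which is \emph{nonnegative} as soon as $d\geq 2$: the holomorphic eigenspace is $\mathbb{C}\,z_2^{d-2}\neq 0$ (the constants, when $d=2$). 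Concretely, the connection on $\tilde{M}$ whose only nonzero Christoffel symbol is $\Gamma^1_{22}=c\,z_2^{d-2}$ is $\Gamma_d$-invariant and holomorphic on all of $\mathbb{C}^2$, and descends to a holomorphic affine connection on $M_d$ distinct from the standard one. So the inspection you postpone does not return ``no non-trivial solution'': uniqueness as you assert it holds only for $d=1$. Note that this resonance also exposes an inconsistency in the target statement itself: the monomial $z_2^d/z_1$ printed in the $g_{12}$-slot of \eqref{matrixHopf} has weight $0$, so it solves the fifth line of \eqref{invariancenabla} only when $d=2$ (the weight-correct generator is e.g.\ $z_1/z_2^2$), and for $d=2$ the parametrization itself produces the holomorphic entry $g_{12}=c$ via $Q_{12}(X)=cX$. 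In short, your anticipated ``main obstacle'' is real: the sketch cannot close as stated, and a correct write-up must treat the resonant entry $g_{12}$ separately — both to repair the generator in \eqref{matrixHopf} for part (1) and to restrict or reformulate the uniqueness claim in part (2).
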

\begin{proof}
Pick a meromorphic affine connection $\nabla$ on  $\hat{M}$ let $\tilde{\nabla}$ be its pullback on $\mathbb{C}^2\setminus \{0\}$ through $\tilde{p}$ (see \autoref{pullback}). Then $\tilde{\nabla}$ is by construction a meromorphic affine connection on $\mathbb{C}^2\setminus \{0\}$, which is $\Gamma_d$-invariant, and there are meromorphic functions $(f_{ij},g_{ij})_{i,j=1,2}$ on $\mathbb{C}^2\setminus \{0\}$ such that the matrix of $\tilde{\nabla}$ in $(\der{}{z_1},\der{}{z_2}) is$: \begin{equation}\label{matrixnablaHopf} \begin{array}{ccc}  dz_1 \otimes \begin{pmatrix}  f_{11} & f_{12} \\ f_{21} & f_{22} \end{pmatrix} &+& dz_2 \otimes \begin{pmatrix}  g_{11} & g_{12} \\ g_{21} & g_{22} \end{pmatrix} \end{array}\end{equation}  The invariance of $\tilde{\nabla}$ by the Galois group of $\tilde{p}$ is equivalent to the system of functional equations: \begin{equation}\label{invariancenabla}\left\{\begin{array}{cccc} \lambda& f_{ii}(\lambda z_1,\lambda^{\frac{1}{d}} z_2)&=&f_{ii}(z_1,z_2) \\\lambda^{\frac{1}{d}}&f_{12}(\lambda z_1,\lambda^{\frac{1}{d}}z_2) &=&f_{12}(z_1,z_2) \\ \lambda^{2-\frac{1}{d}}&f_{21}(\lambda z_1,\lambda^{\frac{1}{d}}z_2)&=&f_{21}(z_1,z_2) \\ \lambda^{\frac{1}{d}}&g_{ii}(\lambda z_1,\lambda^{\frac{1}{d}}z_2)&=&g_{ii}(z_1,z_2)\\ \lambda^{\frac{2}{d}-1}&g_{12}(\lambda z_1,\lambda^{\frac{1}{d}}z_2)&=& g_{12}(z_1,z_2) \\ \lambda &g_{21}(\lambda z_1,\lambda^{\frac{1}{d}} z_2)&=& g_{21}(z_1,z_2) 
\end{array}\right. \end{equation}

Now, it is straightforward that the functions $(f^0_{ij},g^0_{ij})_{i,j=1,2}$ given by setting $P_{ij}=Q_{ij}=1$ in \eqref{matrixHopf} define a solution of the above system. Given any other solution $(f_{ij},g_{ij})_{i,j=1,2})$, the quotients $h_{ij}=\frac{f_{ij}}{f^0_{ij}}$ and $k_{ij}=\frac{g_{ij}}{g^0_{ij}}$ are clearly elements of $q^\# \mathbb{C}(\hat{M})$. Hence $\nabla=\rho(\tilde{\nabla}_{P_{ij},Q_{ij}})$ for some $P_{ij},Q_{ij}\in \mathbb{C}(X)$. Reciprocally, given any elements $P_{ij},Q_{ij}\in \mathbb{C}(X)$, the meromorphic connection $\tilde{\nabla}_{P_{ij},Q_{ij}}$ on $\mathbb{C}^2\setminus \{0\}$ is invariant through the action of the Galois group of $\tilde{p}$, so by \autoref{quotientconnection} there exists a unique $\nabla$ on $\hat{M}$ with $\tilde{p}^\star \nabla = \tilde{\nabla}_{P_{ij},Q_{ij}}$.

         The curvature of the connection $\tilde{\nabla}=\tilde{\nabla}_{P_{ij},Q_{ij}}$  can be computed explicitely. As an example, for $Q_{ij}=0$, $P_{ij}=\delta_{ij}^{21}$ the matrix of the curvature $R_{\tilde{\nabla}}$  in the basis $\der{}{z_1},\der{}{z_2}$ is $$ dA+A\wedge A= dz_1\wedge dz_2 \otimes \begin{pmatrix} 0 & 0 \\\frac{1}{z_1^2} & 0\end{pmatrix} \neq 0$$ whence the assertion.

        However, when $\tilde{\nabla}_1$ is holomorphic, then $P_{ij}=Q_{ij}=0$ that is $\tilde{\nabla}$ is the standard affine structure on $\mathbb{C}^2$.
    
\end{proof}
\subsection{Quotients of meromorphic affine Hopf surfaces}

\begin{theorem}\label{quotientsHopf} Let $(M,\nabla)$ be a minimal meromorphic affine surface with $a(M)=1$ and suppose that the finite covering $\hat{M}$ from \eqref{finitecover} is a principal elliptic surface over $\mathbb{P}^1$. Then $\hat{M}=M$ and $(M,\nabla)$ is classified in \autoref{classificationHopf1}. \end{theorem}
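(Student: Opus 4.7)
The plan is to show that the finite group $\Gamma$ arising from \autoref{reductionprincipalelliptic} must be trivial; once this is established, the construction \eqref{finitecover} gives $\hat{M}=M$ tautologically and \autoref{classificationHopf1} applies directly to $\hat{M}$. My first step is to describe the lift of an arbitrary $\gamma\in\Gamma$ to the universal cover $\tilde{M}=\mathbb{C}^{2}\setminus\{0\}$ of the Hopf surface. Since $\gamma$ covers a Möbius automorphism of $\hat{N}=\mathbb{P}^{1}$, Hartogs' theorem extends the lift $\tilde{\gamma}$ to a biholomorphism of $\mathbb{C}^{2}$ fixing the origin, and preservation of lines through the origin forces $\tilde{\gamma}\in GL_{2}(\mathbb{C})$; normalization of $\Gamma_{d}=\langle\mathrm{diag}(\lambda,\lambda^{1/d})\rangle$ then (for $d>1$) forces $\tilde{\gamma}$ to be diagonal.

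Next, following the roadmap announced in the introduction, I prove that no non-trivial $\gamma\in\Gamma$ has a fixed curve in $\hat{M}$. A fixed curve must lie in a fiber of $\hat{\pi}$, hence equals a whole fiber $F_{0}=\hat{\pi}^{-1}(y_{0})$, and its preimage in $\tilde{M}$ is a punctured line through the origin fixed pointwise by $\tilde{\gamma}$. In coordinates adapted both to this line and to $\Gamma_{d}$, one may write $\tilde{\gamma}=\mathrm{diag}(1,\zeta)$ with $\zeta$ a non-trivial root of unity. I then plug $\tilde{\gamma}$ into the pullback invariance $\tilde{\gamma}^{\star}\tilde{\nabla}=\tilde{\nabla}$ for the explicit matrix form \eqref{matrixHopf} of \autoref{classificationHopf1}: each entry is a product of a monomial in $(z_{1},z_{2})$ (carrying a definite $\zeta$-weight under $\tilde{\gamma}$) with one of the rational functions $P_{ij}(X),Q_{ij}(X)$ whose argument $X=z_{1}/z_{2}^{d}$ is rescaled by $\zeta^{-d}$. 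Using the transformation rule \eqref{pullbackpsi2}, the invariance translates into a system of functional equations of the shape $f(\zeta^{-d}X)=\zeta^{w_{f}}f(X)$, and the weight pattern $w_{f}$ (distinct for different entries of \eqref{matrixHopf}) is seen to be incompatible with $\zeta\neq1$: one entry forces another to vanish, and the remaining functional equations have no non-zero solution, contradicting the existence of a fixed curve.

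The last step concludes that $\Gamma=\{e\}$. Since stabilizers at isolated fixed points would have to be generated by pseudoreflections (acting as complex reflections on the normal bundle) for the quotient $M$ in \autoref{reductionprincipalelliptic} to be smooth, and pseudoreflections necessarily fix curves, the absence of fixed curves forces $\Gamma$ to act freely. Then $\hat{N}\longrightarrow N$ is unramified, so $\overline{\Gamma}\subset\mathrm{Aut}(\mathbb{P}^{1})$ would act freely on $\mathbb{P}^{1}$; but no non-trivial finite subgroup of $\mathrm{Aut}(\mathbb{P}^{1})$ does, hence $\overline{\Gamma}=\{e\}$. A final application of the same invariance analysis to scalar lifts $\tilde{\gamma}=cI$ (which act as fiber translations and lie in $\ker(\Gamma\to\overline{\Gamma})$) shows via the constraint on the $Q_{12}$-entry of \eqref{matrixHopf} that this kernel too is trivial, hence $\Gamma=\{e\}$ and $\hat{M}=M$. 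The main obstacle in this plan is the weight-bookkeeping in the fixed-curve step: the matrix \eqref{matrixHopf} contains eight rational coefficients whose interactions under the $\zeta$-scaling depend both on the order of $\zeta$ and on $d$, so extracting the required incompatibility needs a careful case analysis, in particular distinguishing the cases $\zeta^{d}=1$ and $\zeta^{d}\neq1$.
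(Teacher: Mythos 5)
Your plan has two genuine gaps, and the second is fatal to the overall strategy. First, the fixed-curve step cannot work as described. The invariance system $\tilde{\gamma}^\star\tilde{\nabla}=\tilde{\nabla}$ is \emph{automatically} solvable: the actual connection $\tilde{\nabla}$ solves it, since $\nabla$ is defined on the quotient $M$ and is therefore preserved by every element of $\Gamma$; moreover the origin of the affine parametrization \eqref{bijectionHopf1} (all $P_{ij}=Q_{ij}=0$, the standard flat affine structure of $\mathbb{C}^2$) is invariant under all of $GL_2(\mathbb{C})$, in particular under $\mathrm{diag}(1,\zeta)$. So your weight-bookkeeping can at best force $P_{ij}=Q_{ij}=0$, i.e.\ conclude that $\nabla$ is the standard flat structure --- which is perfectly consistent with a pseudoreflection $\mathrm{diag}(1,\zeta)$ fixing $\{z_2=0\}$ pointwise. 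No contradiction with the existence of a fixed curve can come out of the invariance equations alone. Second, the inference ``$\Gamma$ acts freely on $\hat{M}$, hence $\hat{N}\to N$ is unramified'' is false, and it is exactly the phenomenon this theorem must rule out: a free action upstairs can cover a ramified action on the base, producing multiple fibers downstairs. Concretely, on $M_1$ the lift $\tilde{\gamma}=\mathrm{diag}(\omega,\omega^2)$ with $\omega$ a primitive cube root of unity acts freely (a fixed point of $\tilde{\gamma}^jT^k$ forces $\omega^j\lambda^k=1$ or $\omega^{2j}\lambda^k=1$, hence $k=0$ and $3\mid j$ since $|\lambda|\neq 1$), it preserves the standard flat connection, yet it covers $X\mapsto\omega^{-1}X$ on $\mathbb{P}^1$, which is ramified at $0$ and $\infty$. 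Your concluding appeal to ``no non-trivial finite subgroup of $\mathrm{Aut}(\mathbb{P}^1)$ acts freely'' therefore never gets off the ground.

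The paper's proof takes a quite different and connection-free route, which shows where the real content lies: it assumes $\overline{q}$ has a ramification point, observes that the corresponding fiber $\hat{S}_\beta$ of $\hat{\pi}$ is then the quotient of a coordinate axis (say $\{z_1=0\}$) and is preserved by $\Gamma$, writes the lift of any $\epsilon\in\Gamma$ explicitly as a (triangular) linear map $\tilde{\epsilon}(z_1,z_2)=(\mu(az_1+bz_2),\mu z_2)$, and then exploits the arithmetic constraints coming from $\tilde{\epsilon}^m\in\langle(z_1,z_2)\mapsto(\lambda z_1,\lambda^{1/d}z_2)\rangle$ together with the condition that $\epsilon$ fixes the quotient of $\{z_1=0\}$ to force $\tilde{\epsilon}$ into the Galois group of $\rho_d$, i.e.\ $\epsilon=\mathrm{id}$; hence $\overline{q}$ is unramified, $M$ has no multiple fiber, so $M$ is itself a Hopf surface and \eqref{finitecover} gives $\hat{M}=M$ tautologically. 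Note also a structural mismatch in your plan: what must be excluded is not every non-trivial quotient group (free quotients of $M_d$ by fiber translations such as $cI$ do exist and do preserve connections --- but for their quotients the construction \eqref{finitecover} is already trivial, so they are harmless), but precisely the quotients creating \emph{multiple fibers}, i.e.\ ramification of $\overline{q}$. Any correct proof must engage the multiple-fiber mechanism directly, as the paper does, rather than argue through invariance of $\nabla$, which all the relevant automorphisms can satisfy.
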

\begin{proof} Let $\Gamma$ be the Galois group of $q:\hat{M}\longrightarrow M$ and $\overline{\Gamma}$ the Galois group of $\overline{q}:\hat{N}\longrightarrow N$. Suppose that $\overline{q}$ (and so $q$) admits a ramification point $y_\beta\in N$. By definition, this means that there exists on $M$ a multiple fiber $S_\beta=\pi^{-1}(y_\beta)$. Let $\hat{S}_\beta=q^{-1}(S_\beta)$. Then $\hat{S}_\beta$ is the curve obtained as the quotient of $\{z_1=0\}$ or $\{z_2=0\}$ in the univeral cover $\mathbb{C}^2\setminus \{0\}$ of the Hopf surface $\hat{M}=M_d$ (see \autoref{characterizationHopf}). These are precisely the inverse images of $0$ and $\infty$ through $\hat{\pi}:\hat{M}\longrightarrow\hat{N}=\mathbb{P}^1$. Without loss of generality (up to exchanging $z_1$ and $z_2$), we suppose that $\hat{S}_\beta$ is the quotient of $\{z_1=0\}$. This implies that $\overline{\Gamma}$ is a subgroup of $Aut(\mathbb{C})$. In particular, the action of any $\epsilon \in \Gamma$ lifts to the universal cover $\mathbb{C}^2\setminus \{0\}$ as an automorphism $\tilde{\epsilon}$ defined by: $$\tilde{\epsilon}(z_1,z_2)=(\mu(az_1+bz_2),\mu z_2)$$ for some $a,\mu \in \mathbb{C}^*$ and $b\in \mathbb{C}$. But then $\tilde{\epsilon}^m$ is an element of the Galois group of the universal cover $\rho_d : \mathbb{C}^2\setminus \{0\} \longrightarrow \hat{M}$ for some $m\geq 1$. Since this Galois group is spanned by $(z_1,z_2)\mapsto (\lambda z_1,\lambda^d z_2)$, this implies: $$\left\{ \begin{array}{ccc}\mu^m &=& \lambda^rd \\ &&\\ b\underset{k=1}{\overset{m}{\sum}} \mu^k&=& 0 \\ &&\\a \mu^m &=&\lambda^r \end{array}\right. $$ Since $|\lambda|>1$ we get $b=0$, so that $\tilde{\epsilon}(z_1,z_2)=(\lambda^{\frac{1}{r}}z_1,\lambda^{\frac{d}{r}} z_2)$ for some integer $r$. Now, by definition of $q$ and the remarks above, $\epsilon$ fixes the quotient of $\{z_1=0\}$. Hence $\lambda^{\frac{d}{r}}=\lambda^{ld}$ for some integer $l$, so that $\tilde{\epsilon}$ is in fact an element of the Galois group of $\rho_d$, i.e $\epsilon$ is the identity on $\hat{M}$. 

We have proved that either that $\overline{q}$ is an unramified finite cover. Hence $M$ has no multiple fiber, so it is a Hopf surface, in particular a principal elliptic bundle. This implies $\hat{M}=M$.  \end{proof}

\section{Principal elliptic surfaces over an elliptic curve and quotients}\label{section5}
In this section, following \autoref{reductionprincipalelliptic}, we classify meromorphic affine connections on (holomorphic) principal elliptic surfaces over a one torus $\hat{N}=\mathbb{C}/\Lambda'$, as well as their quotients.

Let $\hat{M}$ be a complex compact surface which is a holomorphic principal elliptic bundle over a torus. We first recall a result of Kodaira asserting that $\hat{M}$ corresponds to one of the two following examples: 
\begin{definition}\label{primary} A primary Kodaira surface over a torus $\hat{N}=\mathbb{C}/\Lambda'$  is an elliptic surface $\hat{M}\overset{\hat{\pi}}{\longrightarrow}\hat{N}$, where $\hat{M}=G\backslash \mathbb{C}^2$, with $\hat{\pi}(z_1,z_2)=[z_1]$ ($[z_1]$ stands for the class of $z_1$ in $\mathbb{C}/\Lambda'$) and the group $G\subset Aut(\mathbb{C}^2)$ spanned by $\psi_1,\psi_2$ as in \autoref{localformelliptic} (for some $\tau \in \mathbb{H}$) and the automorphisms $(\varphi_{\lambda'})_{\lambda'\in \Lambda'}$ defined by: \begin{equation} \varphi_{\lambda'}(z_1,z_2)=(z_1+\lambda',z_2+\overline{\lambda'}z_1+\beta_{\lambda'}) \end{equation} for some $\beta_{\lambda'}\in \mathbb{C}$. \end{definition}
\begin{definition}\label{torus} A two torus is an elliptic surface $\hat{M}\overset{\hat{\pi}}{\longrightarrow}\hat{N}$ where $\hat{M}$ is a quotient $G\backslash \mathbb{C}^2$, with $\hat{\pi}(z_1,z_2)=[z_1]$ and $G$ a subgroup of translations in $\mathbb{C}^2$. \end{definition}
\begin{theorem}\label{canonicaltrivial} Let $\hat{M}$ be a complex compact surface which is a holomorphic principal elliptic bundle over a torus. Then $\mathcal{K}_{\hat{M}}\simeq \mathcal{O}_{\hat{M}}$ and either: \begin{enumerate}[label=\alph*)]
    \item The first Betti number of $\hat{M}$ is odd if and only if $\hat{M}$ is a primary Kodaira surface.
    \item The first Betti number of $\hat{M}$ is even if and only if $\hat{M}$ is a two torus.
\end{enumerate}
\end{theorem}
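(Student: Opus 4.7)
The plan is threefold: first, describe $\hat{M}$ as a quotient of $\mathbb{C}^{2}$ by an explicit affine group $G$; second, trivialize $\mathcal{K}_{\hat{M}}$ via an invariant $(2,0)$-form; third, compute $b_{1}(\hat{M})$ as the rank of $G^{\mathrm{ab}}$ and read off the dichotomy.

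Since $\hat{M}$ is a \emph{principal} elliptic bundle over the torus $\hat{N} = \mathbb{C}/\Lambda'$, there are no singular fibers, and the analysis of \autoref{coordinatesKodaira} identifies the universal cover as $\tilde{M} = \mathbb{C}^{2}$. The deck group $G$ is generated by the fiber translations $\psi_{1}(z_{1},z_{2}) = (z_{1}, z_{2}+1)$, $\psi_{2}(z_{1},z_{2}) = (z_{1}, z_{2}+\tau)$ and by lifts $\varphi_{\lambda'}$ of the base translations $z_{1} \mapsto z_{1} + \lambda'$ for $\lambda' \in \Lambda'$. Principality forces the multiplier on $z_{2}$ in \eqref{universalautomorphisms} to equal $1$, so
\[
\varphi_{\lambda'}(z_{1},z_{2}) \;=\; \bigl(z_{1} + \lambda',\; z_{2} + f_{\lambda'}(z_{1})\bigr)
\]
for some entire function $f_{\lambda'}$. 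The differentials of $\psi_{1}, \psi_{2}, \varphi_{\lambda'}$ are all lower triangular with $1$'s on the diagonal, so $dz_{1} \wedge dz_{2}$ is $G$-invariant and descends to a nowhere vanishing global section of $\mathcal{K}_{\hat{M}}$, proving $\mathcal{K}_{\hat{M}} \simeq \mathcal{O}_{\hat{M}}$.

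The group relation $\varphi_{\lambda_{1}'+\lambda_{2}'} \equiv \varphi_{\lambda_{1}'} \circ \varphi_{\lambda_{2}'}$ modulo $\langle \psi_{1}, \psi_{2}\rangle$ translates into the cocycle condition
\[
f_{\lambda_{1}'+\lambda_{2}'}(z_{1}) \;\equiv\; f_{\lambda_{1}'}(z_{1} + \lambda_{2}') + f_{\lambda_{2}'}(z_{1}) \pmod{\Lambda},
\]
where $\Lambda = \mathbb{Z} \oplus \tau \mathbb{Z}$. Differentiating in $z_{1}$ removes the $\Lambda$-ambiguity, and properness of the $G$-action forces each $f'_{\lambda'}$ to be a bounded entire function on $\mathbb{C}$, hence constant. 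One deduces $f_{\lambda'}(z_{1}) = a(\lambda')\,z_{1} + \beta_{\lambda'}$, with $a : \Lambda' \to \mathbb{C}$ additive. A linear change of the fiber coordinate reduces $a$ either to $0$ (yielding \autoref{torus}) or to $a(\lambda') = \overline{\lambda'}$ (yielding \autoref{primary}). This rigidity statement is the most delicate step; everything else is linear algebra.

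Finally, $b_{1}(\hat{M}) = \operatorname{rk}_{\mathbb{Z}} G^{\mathrm{ab}}$. In the torus case, $G$ is abelian of rank $4$, so $b_{1} = 4$ is even. In the primary Kodaira case, picking $\mathbb{Z}$-generators $\lambda_{1}', \lambda_{2}'$ of $\Lambda'$, a direct computation gives
\[
[\varphi_{\lambda_{1}'}, \varphi_{\lambda_{2}'}](z_{1},z_{2}) \;=\; \bigl(z_{1},\; z_{2} + \overline{\lambda_{1}'}\lambda_{2}' - \overline{\lambda_{2}'}\lambda_{1}'\bigr),
\]
a nonzero purely imaginary translation which cocompactness of the $G$-action forces to lie in $\Lambda$. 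Writing this element as $m\psi_{1} + n\psi_{2}$ with $(m,n) \in \mathbb{Z}^{2} \setminus \{0\}$, the commutator relation becomes the single nontrivial constraint $m\psi_{1} + n\psi_{2} = 0$ in $G^{\mathrm{ab}}$, which kills a rank-one subgroup of $\langle \psi_{1}, \psi_{2}\rangle$ and yields $G^{\mathrm{ab}} \simeq \mathbb{Z}^{3}$, so $b_{1} = 3$ is odd.
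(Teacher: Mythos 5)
Your trivialization of $\mathcal{K}_{\hat{M}}$ by the $G$-invariant form $dz_1\wedge dz_2$ is exactly the paper's argument. Your treatment of the dichotomy is genuinely different: the paper simply invokes Kodaira's classification (\cite{KodairaSI}, Ch.\ 6), whereas you reprove it by presenting the deck group explicitly and computing $b_1(\hat{M})=\operatorname{rk}_{\mathbb{Z}}G^{\mathrm{ab}}$. The cocycle setup, the commutator computation $[\varphi_{\lambda_1'},\varphi_{\lambda_2'}]=$ translation by $2i\,\mathrm{Im}(\overline{\lambda_1'}\lambda_2')\in\Lambda$, and the final rank count ($4$ versus $3$) are all correct and give both directions of the equivalences.

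The gap is in the step you yourself flag as delicate. Properness of the $G$-action does \emph{not} force $f'_{\lambda'}$ to be bounded: conjugating the standard model by the fiber-preserving biholomorphism $(z_1,z_2)\mapsto(z_1,z_2+h(z_1))$, with $h$ entire and $h'$ unbounded, yields an action that is still free, proper and cocompact (it has the same quotient), but now $f_{\lambda'}(z_1)=a(\lambda')z_1+\beta_{\lambda'}+h(z_1+\lambda')-h(z_1)$ has unbounded derivative. The correct statement is that the cocycle $\lambda'\mapsto f'_{\lambda'}$ in $Z^1(\Lambda',\mathcal{O}(\mathbb{C}))$ is \emph{cohomologous} to a constant one, i.e.\ one must first perform the substitution $z_2\mapsto z_2-H(z_1)$ killing the coboundary part before assuming $f'_{\lambda'}$ constant. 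This rests on the fact that $H^1(\Lambda',\mathcal{O}(\mathbb{C}))\simeq H^1(\hat{N},\mathcal{O}_{\hat{N}})\simeq\mathbb{C}$ is generated by the class of the constant cocycle $\lambda'\mapsto\overline{\lambda'}$ --- a Dolbeault/Hodge-theoretic (or Fourier) input on the base torus that your argument needs and does not supply. A second, smaller slip: the additive map $a:\Lambda'\to\mathbb{C}$ has the form $a(\lambda')=\alpha\lambda'+\beta\overline{\lambda'}$, and eliminating the $\alpha\lambda'$ component requires the \emph{quadratic} substitution $z_2\mapsto z_2-\tfrac{\alpha}{2}z_1^2$, not a linear change of the fiber coordinate; only then does rescaling reduce $\beta$ to $0$ or $1$.
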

\begin{proof} Let $(z_1,z_2)$ be coordinates on the universal cover $\tilde{M}$ of $\hat{M}$ as in \autoref{coordinatesKodaira}. Then the holomorphic volume form $dz_1\wedge dz_2$ is clearly invariant through the automorphisms $\psi_1,\psi_2$ and $(\varphi_\gamma)_{\gamma \in \pi_1(\hat{N},y)}$. Thus, it is the pullback of a global holomorphic volume form $\hat{\eta}$ on the covering $\overline{M}$ defined as in \eqref{intermediatecovering}, which is invariant through the Galois group of this covering. Hence, $\hat{\eta}$ is the pullback of a global holomorphic volume form on $M$, proving $\mathcal{K}_M\simeq \mathcal{O}_M$.

The second assertion is a part of a result of Kodaira (\cite{KodairaSI}, Chapter 6.), where we eliminated the K3 surfaces since these are elliptic surfaces over the projective line.
    
\end{proof}

 We are thus led to classify meromorphic affine primary Kodaira surfaces, meromorphic affine two tori, and their quotients.

\subsection{Meromorphic affine primary Kodaira surfaces}
Let $\hat{M}\overset{\pi}{\longrightarrow} \hat{N}= \mathbb{C}/\Lambda'$ be a primary Kodaira surface (\autoref{primary}) and $G$ the group such that $\hat{M}=G\backslash \mathbb{C}^2$. Suppose the existence of a meromorphic affine connection $\nabla$ on  $(M,D)$ for some divisor $D$. Since $\pi$ is a principal elliptic bundle, \autoref{invariance} implies $D=\pi^* C$ for some divisor on the one torus $N$.

Define $\mathcal{E}_0$ as the subspace of $\Lambda'$-elliptic functions, that is: \begin{equation}\label{E0}\begin{array}{ccc}\mathcal{E}_0 =\{h\in \mathcal{M}(\mathbb{C})\, |\, \forall \lambda'\in \Lambda',\, \delta_{\lambda'}(h)(z_1)=0\} &\text{where}& \delta_{\lambda'}(h)(z_1)=h(z_1+\lambda')-h(z_1)\end{array} \end{equation} Recall that $\mathcal{E}_0$ is the subfield of meromorphic functions obtained as the extension of $\mathbb{C}$ by two elements $\wp(z_1),\wp'(z_1)$, where: \begin{equation}\label{Weirestrass}\wp(z_1) = \frac{1}{z_1^2}+\underset{\lambda' \in \Lambda'\setminus \{0\}}{\sum} \frac{1}{(z_1-\lambda')^2}-\frac{1}{\lambda'^2} \end{equation} is Weirestrass elliptic function. Then define: \begin{equation}\begin{array}{ccc}\mathcal{E}_1 &=& \{ h \in \mathcal{M}(\mathbb{C})\, |\, \exists \chi_h \in Hom_\mathbb{Z}(\Lambda',\mathbb{C}),\, \forall \lambda'\in \Lambda',\, \delta_{\lambda'}(h)(z_1)=\chi_h(\lambda')\}\end{array}\end{equation} equipped with the natural linear map: \begin{equation}\begin{array}{ccccc}\chi &:& \mathcal{E}_1 & \longrightarrow & Hom_\mathbb{Z}(\Lambda',\mathbb{C}) \\&& h & \mapsto & \delta(h) \end{array}\end{equation} Clearly $h\in \mathcal{E}_1$ if and only if $h'\in \mathcal{E}_0$. In particular, $z_1 \in \mathcal{E}_1$, and the Weirestrass zeta function (a primitive of $\wp$) : \begin{equation} \zeta \in \mathcal{E}_1 \end{equation} This implies:

\begin{lemma}\label{E1} There is an exact sequence: \begin{equation} \xymatrix{ 0 \ar[r] & \mathcal{E}_0 \ar[r] & \mathcal{E}_1 \ar[r]^{\chi} & Hom_\mathbb{Z}(\Lambda',\mathbb{C}) \ar[r] & 0 }\end{equation} which splits through the linear map: \begin{equation}  \chi^{-1}(\alpha \chi_{\zeta} +\beta \chi_{z_1}) = \alpha \zeta(z_1)+\beta z_1  \end{equation} \end{lemma}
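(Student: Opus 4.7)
The plan is to verify exactness of the sequence directly from the definitions, then produce the splitting by establishing $\mathbb{C}$-linear independence of $\chi_\zeta$ and $\chi_{z_1}$ in the target, via the classical Legendre relation for the Weierstrass $\zeta$ function.

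First I would check that $\chi$ is well-defined as a $\mathbb{C}$-linear map with image in $\mathrm{Hom}_\mathbb{Z}(\Lambda',\mathbb{C})$. The telescoping identity
\[
\delta_{\lambda'_1+\lambda'_2}(h)(z_1) \;=\; \delta_{\lambda'_2}(h)(z_1+\lambda'_1) + \delta_{\lambda'_1}(h)(z_1),
\]
together with the hypothesis that each $\delta_{\lambda'}(h)$ is a constant when $h\in\mathcal{E}_1$, forces $\lambda'\mapsto \chi_h(\lambda')$ to be additive. The $\mathbb{C}$-linearity of $h\mapsto \chi_h$ is then immediate.

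Exactness at $\mathcal{E}_1$ is tautological from the definition of $\mathcal{E}_0$: $\ker\chi$ is precisely the subspace of $\Lambda'$-periodic meromorphic functions. The inclusion $\mathcal{E}_0\hookrightarrow\mathcal{E}_1$ is obviously injective, so the only substantive point is surjectivity. Fixing a $\mathbb{Z}$-basis $(\omega_1,\omega_2)$ of $\Lambda'$, the target $\mathrm{Hom}_\mathbb{Z}(\Lambda',\mathbb{C})$ is a $\mathbb{C}$-vector space of dimension two, so it suffices to prove that the two distinguished elements $\chi_\zeta$ and $\chi_{z_1}$ are $\mathbb{C}$-linearly independent. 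One has $\chi_{z_1}(\omega_i)=\omega_i$, while $\chi_\zeta(\omega_i)=\eta_i$ is the classical quasi-period of the Weierstrass $\zeta$ function at $\omega_i$; the Legendre relation $\omega_1\eta_2-\omega_2\eta_1 = 2i\pi$ then gives invertibility of the $2\times 2$ matrix with rows $(\omega_i,\eta_i)$, hence the required independence.

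The main obstacle is precisely this appeal to the Legendre relation (equivalently, the fact that $\zeta$ is not a $\mathbb{C}$-linear combination of $z_1$ modulo $\mathcal{E}_0$, which would otherwise produce a degree-one elliptic function on $\mathbb{C}/\Lambda'$, impossible). Once independence is established, the map $\alpha\chi_\zeta+\beta\chi_{z_1}\mapsto \alpha\zeta(z_1)+\beta z_1$ is automatically a well-defined $\mathbb{C}$-linear section of $\chi$ with image in $\mathbb{C}\zeta\oplus\mathbb{C} z_1\subset\mathcal{E}_1$, providing the desired splitting.
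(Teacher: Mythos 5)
Your proof is correct, and its skeleton matches the paper's: $\ker\chi=\mathcal{E}_0$ is definitional, and surjectivity (hence the splitting) reduces to the $\mathbb{C}$-linear independence of $\chi_\zeta$ and $\chi_{z_1}$ in the two-dimensional space $\mathrm{Hom}_{\mathbb{Z}}(\Lambda',\mathbb{C})$. Where you genuinely diverge is in how that independence is established. The paper argues via residues: $\mathrm{Res}_0(\zeta)=-1$ while $z_1$ and elements of $\mathcal{E}_0$ contribute nothing, so $\zeta$ cannot lie in $\mathbb{C} z_1+\mathcal{E}_0$. (As stated the paper is slightly too strong --- an individual elliptic function can have a nonzero residue at $0$, e.g.\ $\wp'/\wp$; the correct statement is that the residues of an elliptic function over a fundamental parallelogram sum to zero, which is exactly your parenthetical ``no degree-one elliptic function'' remark. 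So your aside is the paper's argument, repaired.) Your main route instead invokes the Legendre relation $\omega_1\eta_2-\omega_2\eta_1=2i\pi$ to get invertibility of the period/quasi-period matrix directly; this is a heavier classical input but gives independence in one stroke and even quantifies it. You also supply the telescoping identity showing $\chi_h$ is additive, a well-definedness point the paper leaves implicit. Either approach is acceptable; the residue (degree) argument is the more elementary and is what the paper intends.
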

\begin{proof} The fact $ker(\chi)=\mathcal{E}_0$ is immediate by definition of $\chi$. By \eqref{Weirestrass}, $Res_0(\zeta)=-1$, while $Res_0(z_1)=Res_0(f)=0$ for any $f\in \mathcal{E}_0$. As a consequence, $\chi_\zeta=\chi(\zeta)$ and $\chi_{z_1}=\chi(z_1)$ are independant. Since $Hom_\mathbb{Z}(\Lambda',\mathbb{C})$ has dimension two, the above sequence is right-exact.

\end{proof}

Consider the pullback $\tilde{\nabla}=\tilde{\rho}^\star \nabla$, which is a meromorphic $G$-invariant affine connection on $\mathbb{C}^2$. By \autoref{invariance}, either $\nabla$ is flat or the pole $\tilde{D}$ of $\tilde{\nabla}$ is supported on a $\Lambda'$-invariant union of subvarieties $\{z_1=y_\alpha + \lambda'\}$. Suppose $\tilde{\nabla}$ is not flat. \\

There are meromorphic function $f_{ij},g_{ij}$ on $\mathbb{C}^2$, with poles supported at $\tilde{D}$, such that : \begin{equation}\label{matnabla} Mat_{\der{}{z_1},\der{}{z_2}}(\tilde{\nabla})=dz_1 \otimes \begin{pmatrix}f_{11} & f_{12}\\ f_{21} & f_{22} \end{pmatrix} + dz_2 \otimes \begin{pmatrix} g_{11} & g_{12} \\g_{21} & g_{22} \end{pmatrix}\end{equation} By the $\Lambda$-invariance of $\tilde{\nabla}$, the restriction of $f_{ij}$ and $g_{ij}$ to any fiber of $z_1$ is constant. That is $f_{ij},g_{ij}$ are elements of $(\pi\circ \tilde{\rho})^\# \mathbb{C}(N)$, and we will omit the second variable $z_2$ in the sequel.

Now, given any $\lambda'\in \Lambda'$, we have : \begin{equation}\label{pullbackformsKodaira} \begin{array}{ccc} \varphi^\star_{\lambda'}(dz_1)=dz_1  &\text{and} & \varphi^\star_{\lambda'}(dz_2)=dz_2+b_{\lambda'}dz_1 \end{array}\end{equation} where $\varphi_{\lambda'}$ are the elements of $G$ as in \autoref{primary}. Hence,  the invariance of $\tilde{\nabla}$ by $G$  rewrites as (see \cite{Vitter} p.238-239): \begin{equation}\label{invarianceKodaira}\forall \lambda'\in \Lambda',\, \left\{ \begin{array}{ccl}\delta_{\lambda'}(g_{12})(z_1)&=&0\\&&\\  \delta_{\lambda'}(g_{ii})(z_1)&=&(-1)^{i+1} \overline{\lambda'}g_{12}(z_1)\\&&\\ \delta_{\lambda'}(g_{21})(z_1 )&=& \overline{\lambda'}(g_{22}-g_{11})(z_1) -\overline{\lambda'}^2g_{12}(z_1) \\&& \\\delta_{\lambda'}(f_{12})(z_1)&=& -\overline{\lambda'}g_{12}(z_1)\\&&\\ \delta_{\lambda'}(f_{ii})(z_1) &=&  (-1)^{i+1} \overline{\lambda'}f_{12}(z_1)-\overline{\lambda'}g_{ii} (z_1)\\&&+ (-1)^i \overline{\lambda'}^2 g_{12}(z_1)\\&&\\ \delta_{\lambda'}(f_{21})(z_1) &=&   \overline{\lambda'} (f_{22}-f_{11}-g_{21})(z_1) \\&&  + \overline{\lambda'}^2(g_{22}(z_1)-g_{11}(z_1)-f_{12}(z_1)) + \overline{\lambda'}^3 g_{12}(z_1) \end{array}\right. \end{equation} Reciprocally, any family $(f_{ij},g_{ij})_{i,j=1,2}$ of meromorphic functions on $\mathbb{C}$ satisfying \eqref{invarianceKodaira} define a meromorphic affine connection on $M$.

Now, we  study the simultaneous solutions $(f_{ij},g_{ij})$ of the system of functional equations \eqref{invarianceKodaira}.

\begin{proposition}\label{matricesKodaira}  Let $\hat{\nabla}$ be a meromorphic connection on a primary Kodaira surface $\hat{M}$ as above. Let    $\alpha \zeta + \beta z_1$  the meromorphic function from \autoref{E1}. Then, using the notation $\mathcal{Z}(z_1)=\alpha \zeta(z_1)+\beta z_1$, the matrix of the meromorphic affine connection $\tilde{\nabla}=p^\star \hat{\nabla}$ in the basis $(\der{}{z_1},\der{}{z_2})$ is either:
\vspace{1\baselineskip}

\begin{enumerate}[label= \textbf{form \Roman*}:]
    \item 
\begin{equation}\label{matrix1Kodaira}\begin{array}{cc} &dz_1 \otimes \begin{pmatrix}[c|c] - (\mathcal{Z}^2 +\gamma_{11})g_{12}  & \mathcal{Z}g_{12} \\ \hline   \begin{array}{rr}&- (\mathcal{Z}^3 \\ + & c(\mathcal{Z}+k)^2 \\ +& d\mathcal{Z} + \gamma_{12})g_{12}  \end{array}& \begin{array}{ll}& ((\mathcal{Z} + \delta_{22}+\gamma_{12})^2 \\ +& \gamma_{22})g_{12} \end{array}\end{pmatrix}\\&\\ +& dz_2 \otimes \begin{pmatrix}  -(\mathcal{Z} + \delta_{11})g_{12} & g_{12} \\ -((\mathcal{Z}+(\delta_{22}-\delta_{11}))^2 + \delta_{21}) g_{12} &(\mathcal{Z} + \delta_{22})g_{12} \end{pmatrix} \end{array}\end{equation} with $g_{12}$ a non trivial $\Lambda'$-elliptic function, $\gamma_{ij},\delta_{ij}\in \mathcal{E}_0$, and $h=-\frac{2}{3}\delta_{11}$, $c=\delta_{11}+\delta_{22}+\gamma_{12}$ and $d,k$ satisfying \eqref{coeff21matrix1Kodaira}, or:
\item \begin{equation}\label{matrix2Kodaira}
\begin{array}{cc} &dz_1 \otimes \begin{pmatrix}[c|c] \begin{array}{ll} & - (g_{11}-f_{12})\mathcal{Z}\\ + &\gamma_{11} \end{array} & f_{12} \\ \hline  \begin{array}{ll} &(g_{11}-g_{22}+f_{12})\mathcal{Z}^2  \\ +&(\gamma_{11}-\gamma_{22}+\delta_{21})\mathcal{Z}\\+&\gamma_{21} \end{array}   &\begin{array}{ll}& - (g_{22}+f_{12})\mathcal{Z}\\+&\gamma_{22} \end{array}\end{pmatrix}\\&\\ +& dz_2 \otimes \begin{pmatrix}  g_{11} & 0 \\ - (g_{22}-g_{11})\mathcal{Z}+\delta_{21} & g_{22} \end{pmatrix} \end{array}\end{equation} where $g_{11},g_{22},f_{12},\gamma_{ij},\delta_{21}$ are arbitrary  $\Lambda'$-elliptic functions. 

\end{enumerate}
 
\vspace{1\baselineskip}

Reciprocally, any matrix as above is the matrix of the pullback $\tilde{\nabla}$ of some meromorphic affine connection $\hat{\nabla}$ on $\hat{M}$ through its universal covering $\tilde{p}:\tilde{M}\longrightarrow \hat{M}$. 
\end{proposition}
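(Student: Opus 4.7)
The plan is to reduce the classification to solving the system \eqref{invarianceKodaira} for families $(f_{ij},g_{ij})_{i,j=1,2}$ of meromorphic functions of the single variable $z_1$. The reduction to functions of $z_1$ alone has already been carried out in the setup preceding the statement: $\psi_1,\psi_2$-invariance forces $1$-periodicity and $\tau$-periodicity in $z_2$, and since by \autoref{invariance} the poles are supported on fibers of $z_1$, restriction to a generic fiber of $z_1$ is a holomorphic elliptic function, hence constant. So all six coefficients descend to elements of $(\hat\pi\circ\tilde p)^{\#}\mathbb{C}(\hat N)$, i.e.\ to meromorphic functions of $z_1$ alone.

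The key computational tool is the function $\mathcal{Z}(z_1)=\alpha\zeta(z_1)+\beta z_1$, chosen via \autoref{E1} so that $\delta_{\lambda'}(\mathcal{Z})=\overline{\lambda'}$; such a choice exists because $\lambda'\mapsto\overline{\lambda'}$ is a $\mathbb{Z}$-linear map $\Lambda'\to\mathbb{C}$ and $\chi_\zeta,\chi_{z_1}$ span $\mathrm{Hom}_{\mathbb{Z}}(\Lambda',\mathbb{C})$. Since the kernel of $\delta_{\lambda'}$ (simultaneously for all $\lambda'$) is exactly $\mathcal{E}_0$, the $\mathcal{Z}$-trick reads: whenever $\delta_{\lambda'}(h)=e(z_1)\,\overline{\lambda'}$ with $e\in\mathcal{E}_0$, the general solution is $h=e\cdot\mathcal{Z}+\tilde h$ with $\tilde h\in\mathcal{E}_0$, using that $\delta_{\lambda'}(e\mathcal{Z})=e\,\delta_{\lambda'}(\mathcal{Z})$ by $\Lambda'$-ellipticity of $e$. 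Iterating and expanding $\delta_{\lambda'}(\mathcal{Z}^k)=\sum_{j<k}\binom{k}{j}\mathcal{Z}^{j}\overline{\lambda'}^{\,k-j}$, any equation of the form $\delta_{\lambda'}(h)=\sum_{k}e_k(z_1)\overline{\lambda'}^{\,k}$ with $e_k\in\mathcal{E}_0$ is solved by a polynomial in $\mathcal{Z}$ with elliptic coefficients, fixed by matching the highest power of $\overline{\lambda'}$ first and descending.

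Armed with this, one solves the six equations in the order they are written: the system is upper-triangular, so one determines successively $g_{12}$ (elliptic), then $g_{11},g_{22}$ and $f_{12}$, then $g_{21}$, then $f_{11},f_{22}$, and finally $f_{21}$. The case split on $g_{12}$ is natural: if $g_{12}=0$ (Form II), every right-hand side is at most linear in $\overline{\lambda'}$ with elliptic coefficient, the solutions are at most linear in $\mathcal{Z}$, and $g_{11},g_{22},f_{12}$ together with the integration constants $\gamma_{ij},\delta_{21}\in\mathcal{E}_0$ are free parameters, giving exactly \eqref{matrix2Kodaira}. If $g_{12}\neq 0$ (Form I), the iterative procedure pushes up to cubic expressions in $\mathcal{Z}$ by the time one reaches $f_{21}$, and the stated matrix \eqref{matrix1Kodaira} is read off by identifying leading $\mathcal{Z}$-coefficients with those dictated by $g_{12}$ and the integration constants $\gamma_{ij},\delta_{ij}\in\mathcal{E}_0$.

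The main obstacle will be the $f_{21}$-equation in Form I, whose right-hand side is cubic in $\overline{\lambda'}$ with $\Lambda'$-elliptic coefficients assembled from the previously obtained $f_{11}-f_{22},\,g_{21},\,g_{22}-g_{11},\,f_{12},\,g_{12}$; when these are substituted as polynomials in $\mathcal{Z}$, the cancellation of the mixed $\mathcal{Z}^{j}\overline{\lambda'}^{k-j}$ terms (and not merely of the leading $\overline{\lambda'}^{3}$ coefficient) forces the algebraic relations $c=\delta_{11}+\delta_{22}+\gamma_{12}$ and the auxiliary identity \eqref{coeff21matrix1Kodaira} between $d,k$ and the already chosen elliptic data. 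The converse direction — that every matrix of Form I or Form II satisfies \eqref{invarianceKodaira} for every $\lambda'\in\Lambda'$ and thus descends by \autoref{coordinatesKodaira}(4) to a meromorphic affine connection on $\hat M$ — is a direct substitution using $\delta_{\lambda'}(\mathcal{Z}^k)$ together with $\Lambda'$-ellipticity of all remaining coefficients, so the content lies in the forward implication.
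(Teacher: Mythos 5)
Your proposal follows essentially the same route as the paper: reduce to the functional system \eqref{invarianceKodaira} in the variable $z_1$ alone, use the splitting of \autoref{E1} (i.e.\ the function $\mathcal{Z}=\alpha\zeta+\beta z_1$ with $\delta_{\lambda'}(\mathcal{Z})=\overline{\lambda'}$) to integrate the equations iteratively in their upper-triangular order, split on whether $g_{12}$ vanishes, and check the converse by direct substitution. You also correctly locate where the constraints \eqref{coeff21matrix1Kodaira} arise, namely in matching the cubic-in-$\mathcal{Z}$ terms of the $f_{21}$ equation in Form I, which is exactly the paper's argument.
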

\begin{proof} Recall that the pullback of a meromorphic affine connection on $\hat{M}$ to its universal covering $\tilde{M}$ defines a bijection between meromorphic affine connections on $\hat{M}$ and  meromorphic functions $(f_{ij},g_{ij})_{i,j=1,2}$ on $\mathbb{C}$ solutions of \eqref{invarianceKodaira}.

\begin{itemize}[label=$\bullet$]
\item First suppose $g_{12}\neq 0$. In this case, the first line of \eqref{invarianceKodaira} is equivalent to $g_{12}\in \mathcal{E}_0\setminus \{0\}$, and applying \autoref{E1} to $\frac{g_{ii}}{g_{12}}$ shows that the second and fourth one is equivalent to $g_{ii}=(-1)^i (\alpha \zeta + \beta z_1+\delta_{ii})g_{12}$ and $f_{12}=(\alpha \zeta + \beta z_1+\gamma_{12})g_{12}$ for some elliptic functions $\delta_{11},\delta_{22},\gamma_{12}$. Rewriting the system \eqref{invarianceKodaira} in this case, we see that the third line is equivalent to: \begin{equation}\label{identitydelta} \forall \lambda'\in \Lambda',\, \delta_{\lambda'}(g_{21})(z_1)=\delta_{\lambda'}(-(\alpha \zeta + \beta z_1+(\delta_{22}-\delta_{11}))^2 g_{12})(z_1) \end{equation}   so that this line becomes equivalent to $$g_{21}=-((\alpha \zeta + \beta z_1+(\delta_{22}-\delta_{11}))^2 + \delta_{21}) g_{12}$$ for an arbitrary   elliptic function $\delta_{21}$. By the same principle, the fifth and sixth lines are now equivalent to $f_{ii}=(-1)^i ((\alpha \zeta + \beta z_1 + \delta_{ii}+\gamma_{12})^2 +\gamma_{ii})g_{12}$ and $f_{21}= -(\frac{1}{3}(\alpha \zeta + \beta z_1 + h)^3 + c(\alpha \zeta + \beta z_1 +k)^2 +\gamma_{21})g_{12}$ for an arbitrary elliptic functions $\gamma_{21}$ and with $h,c,k\in \mathcal{E}_0$ solutions of the system: 
\begin{equation}\label{coeff21matrix1Kodaira} \left \{\begin{array}{ccl} 3h^2 + 2ck &=& (\delta_{22}-\delta_{11})^2+(\delta_{22}+\gamma_{12})^2+(\delta_{11}+\gamma_{12})^2 + \delta_{21}+\gamma_{11}-\gamma_{22} \\&&\\ 2c+3h &=& 4(\delta_{22}+\gamma_{12}) \\&&\\3h+c&=& \delta_{22}-\delta_{11}+\gamma_{12} \end{array} \right. \end{equation}

We get the matrix \textbf{form I}.

\item Now suppose $g_{12}=0$. Then using the \autoref{E1} as before we get that the five first lines of \eqref{invarianceKodaira} are equivalent to $g_{12}=0$,$g_{11}, g_{22},f_{12}\in \mathcal{E}_0$ and $g_{21}=(\alpha \zeta+\beta z_1)(g_{11}-g_{22})+\delta_{21}$ and $f_{ii}=-(\alpha \zeta +\beta z_1)(g_{ii}+(-1)^i f_{12})+\gamma_{ii}$ for some arbitrary elliptic function $\gamma_{ii},\delta_{21}$.

Now,  the last line is equivalent to: $$\delta_{\lambda'}(f_{21})(z_1)= \delta_{\lambda'}((\alpha \zeta + \beta z_1 +\gamma_{21})^2 (g_{11}-g_{22}+f_{12})  +(\alpha \zeta +z_1)(\gamma_{11}-\gamma_{22}+\delta_{21}))$$ We get the \textbf{form II}.
\end{itemize}

\end{proof}

We obtained: \begin{theorem}\label{classificationprimaryKodaira} Let $\pi:M\longrightarrow \mathbb{C}/\Lambda'$ be a primary Kodaira surface, $\zeta,\alpha,\beta$ as above and $\tilde{\rho}:\mathbb{C}^2 \longrightarrow M$ its universal cover. Then: \begin{enumerate}
    \item The pullback of meromorphic connections through $\rho'$  gives a bijection between the set of meromorphic affine connections on $M$ and the set of meromorphic affine connections on $\mathbb{C}^2$ with matrix as in \autoref{matricesKodaira}.
    
    \item The only holomorpic affine connections on $M$ are the     $\nabla$ corresponding to $\tilde{\nabla}$ with matrix \eqref{matrix2Kodaira} with constant entries and $g_{21}=f_{22}-{11}$ (this was first proved by A. Vitter, see \cite{Vitter}, 5.b). In particular their curvature identically vanishes, and there are flat holomorphic affine structures on $M$.

    \item There exists non flat meromorphic affine connections on $M$.

\end{enumerate} \end{theorem}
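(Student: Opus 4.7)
My plan is to derive the three assertions directly from Proposition \ref{matricesKodaira} together with a case-by-case inspection of Forms \textbf{I} and \textbf{II}.

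\textbf{Part 1} is essentially a reformulation of Proposition \ref{matricesKodaira}: pullback through $\tilde\rho$ identifies meromorphic affine connections on $M$ with $G$-invariant ones on $\mathbb{C}^2$, by Lemma \ref{quotientconnection} applied to the free action of $G$; the proposition then exhausts such $G$-invariant connections through Forms \textbf{I} and \textbf{II}. Combining gives the bijection as stated.

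For \textbf{Part 2}, I first rule out Form \textbf{I}. Its description requires the factor $g_{12}$ to be a nontrivial $\Lambda'$-elliptic function, and by Liouville any such function must have poles on $\mathbb{C}$. These poles cannot be cancelled by the polynomials in $\mathcal{Z}=\alpha\zeta+\beta z_1$, because $\mathcal{Z}$ has its own poles precisely along $\Lambda'$, which are disjoint from the poles of a generic elliptic $g_{12}$. Hence Form \textbf{I} can never produce a holomorphic connection. For Form \textbf{II}, I impose that every matrix entry be entire on $\mathbb{C}^2$: first, every $\Lambda'$-elliptic coefficient (the $g_{ii}$, $f_{12}$, $\gamma_{ij}$, $\delta_{21}$) must then be a constant; second, since $M$ is a \emph{primary Kodaira} surface rather than a two-torus, the character $\lambda'\mapsto\overline{\lambda'}$ on $\Lambda'$ is not a $\mathbb{C}$-multiple of $\chi_{z_1}$, so the representative $\mathcal{Z}$ of Lemma \ref{E1} has a genuine $\zeta$-contribution and therefore has poles on $\Lambda'$. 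Consequently every coefficient of $\mathcal{Z}^k$ ($k\geq 1$) in the entries of Form \textbf{II} must vanish. Collecting the vanishing conditions from the $(1,1)$, $(2,2)$ and $(2,1)$ positions of both matrix blocks gives a small linear system whose unique solution is $g_{12}=g_{11}=g_{22}=f_{12}=0$ together with $\delta_{21}=\gamma_{22}-\gamma_{11}$, equivalently $g_{21}=f_{22}-f_{11}$. This is the claimed characterisation. The flatness assertion is then obtained by a direct computation of $dA+A\wedge A$ on the resulting constant matrix, which encodes the fact that these connections descend from the translation-invariant flat affine structure on $\mathbb{C}^2$ to the quotient $M$, as already observed by Vitter.

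For \textbf{Part 3}, it is enough to exhibit one explicit parameter choice producing nonzero curvature. Taking Form \textbf{I} with $g_{12}=\wp(z_1)$ and all other free parameters $\gamma_{ij},\delta_{ij},h,c,d,k$ set to zero, I compute $R_{\tilde\nabla}=dA+A\wedge A$ in the trivialisation $(\partial/\partial z_1,\partial/\partial z_2)$; the resulting $2$-form has a nonzero entry (analogous to the Hopf-surface computation at the end of the proof of Theorem \ref{classificationHopf1}), establishing the existence of non-flat meromorphic affine connections on $M$.

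The main obstacle is the bookkeeping in Part 2: correctly enumerating the independent vanishing conditions on the coefficients of $\mathcal{Z}$ and $\mathcal{Z}^2$ in the four entries of the two Form \textbf{II} blocks, showing that they are mutually compatible and force the stated relation $g_{21}=f_{22}-f_{11}$, and finally checking that the resulting constant connection is genuinely flat.
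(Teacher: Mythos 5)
Your overall strategy coincides with the paper's: both reduce everything to \autoref{matricesKodaira} and then inspect Forms \textbf{I} and \textbf{II} entrywise. Your Part 1 is the paper's Part 1 verbatim, and your Part 2 is in fact \emph{more} detailed than the paper's one-line assertion; the key observation you supply — that $\alpha\neq 0$ because $\lambda'\mapsto\overline{\lambda'}$ is not a $\mathbb{C}$-multiple of $\chi_{z_1}$ on a lattice, so $\mathcal{Z}=\alpha\zeta+\beta z_1$ genuinely has poles along $\Lambda'$ — is exactly what makes the pole-counting work, and your resulting linear system does yield the stated characterisation $f_{12}=g_{11}=g_{22}=0$, all entries constant, $g_{21}=f_{22}-f_{11}$.

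Two points need repair. First, your exclusion of Form \textbf{I} has a small hole: ``non-trivial'' there means $g_{12}\in\mathcal{E}_0\setminus\{0\}$, so $g_{12}$ may be a nonzero \emph{constant}, to which the Liouville argument does not apply. The case is handled by the same mechanism you use for Form \textbf{II}: the entry $\mathcal{Z}g_{12}=c\,\mathcal{Z}$ in position $(1,2)$ of the $dz_1$-block then has poles along $\Lambda'$ (again because $\alpha\neq 0$), so holomorphy still forces $g_{12}=0$, a contradiction. Second, in Part 3 you only assert the curvature computation. Your choice (Form \textbf{I} with $g_{12}=\wp$) is legitimate but much heavier than necessary — every entry of both blocks is then a nonzero polynomial in $\mathcal{Z}$ times $\wp$, and you must also check that the constraint system \eqref{coeff21matrix1Kodaira} is compatible with setting the remaining parameters to zero (it is, since all $\delta_{ij},\gamma_{ij}=0$ forces $h=c=0$). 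The paper instead takes a Form \textbf{II} connection with a single non-constant entry ($f_{11}=\wp$, $f_{22}=f_{21}=0$), for which $dA+A\wedge A$ is a one-line computation with a visibly nonzero $(2,1)$-entry. Either example works, but as written your Part 3 is a claim rather than a proof, and I would recommend switching to the Form \textbf{II} example or actually carrying out the Form \textbf{I} computation.
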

\begin{proof}
\begin{enumerate}

    \item  By  the remark below \eqref{invarianceKodaira}, the set of meromorphic affine connections on $M$ is in bijection with the set of $\tilde{\nabla}$ with matrix form as in \autoref{matricesKodaira}.

    \item Among the matrix forms in \eqref{matricesKodaira}, the only possible form with holomorphic one forms as entries is \eqref{matrix2Kodaira}, with $f_{ij}\in \mathbb{C}$. In particular the curvature is identically zero, and picking $f_{11}=f_{22}=f_{21}=0$, we get that the standard holomorphic affine structure of $\mathbb{C}^2$ induces a holomorphic affine structure on $M$, thus recovering the result of Inoue,Kobayashi and Ochiai.

    \item In \eqref{matrix2Kodaira}, pick $f_{11}=\wp$, $f_{22}=f_{21}=0$. Then the curvature of $\tilde{\nabla}$ is $$R_{\tilde{\nabla}}=-\wp(z_1) dz_1 \wedge dz_2 \otimes dz_1 \otimes \der{}{z_2} \neq 0$$ We thus get a non flat meromorphic affine connection on $M$.

\end{enumerate}
    
\end{proof}

\subsection{Quotients: Meromorphic affine secondary Kodaira surfaces}

We now classify the quotients of meromorphic affine primary Kodaira surfaces.

The following fact, which comes from the proof of Theorem 39 in \cite{KodairaSII}, describe the possible quotients of a primary Kodaira surface $\hat{M}$:

\begin{lemma}\label{lemmasecondary} Let $M\overset{\pi}{\longrightarrow}N$ be a minimal elliptic surface with $a(M)=1$, endowed with a meromorphic affine connection, and $\hat{M}\overset{\hat{\pi}}{\longrightarrow} \hat{N}$ its finite ramified covering as in \autoref{reductionprincipalelliptic}. Suppose $\hat{M}$ has canonical trivial bundle. Then either $\hat{M}=M$ or $\mathcal{K}_M^{\otimes k}=\mathcal{O}_M$ for some $k\geq 2$. Moreover: \begin{enumerate}  \item$M=\Gamma\backslash\hat{M}$ where $\Gamma$ is a cyclic group acting freely and spanned by an automorphism $\tilde{\Psi}$ of the form \begin{equation}\label{PsisecondaryKodaira} \tilde{\Psi}(z_1,z_2)=(\nu z_1+\theta, \mu z_2  +az_1 +b)\end{equation}  where $\nu$ is a $k$-th root of the unity ($k\leq 6$), $\mu$ is a power of $\nu$ and $a,b\in \mathbb{C}$. \item Moreover, if $\hat{M}$ is a two torus and $\Gamma$ is not trivial, $\mu\neq \nu$ in \eqref{PsisecondaryKodaira}. \end{enumerate}
\end{lemma}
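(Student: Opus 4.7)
The plan is to apply the lift formula \eqref{liftPsi} of \autoref{reductionprincipalelliptic} to a general element of $\Gamma$, then combine the structure theorem \autoref{canonicaltrivial} for $\hat{M}$ with the requirement that $\tilde{\Psi}$ normalize the deck group of $\tilde{M}\to\hat{M}$. This essentially reproduces Kodaira's argument from \cite{KodairaSII}, Theorem~39.

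First, I would use the freeness hypothesis on $\Gamma$ from \autoref{reductionprincipalelliptic} to identify $M=\Gamma\backslash\hat{M}$ with $q\colon\hat{M}\to M$ étale, and then apply \eqref{liftPsi} to any $\Psi\in\Gamma$ to obtain a lift $\tilde{\Psi}(z_1,z_2)=(\delta\cdot z_1,\mu z_2+f_\delta(z_1))$. Here $\delta$ lifts a finite-order automorphism $\overline{\delta}$ of $\hat{N}=\mathbb{C}/\Lambda'$, so by the crystallographic restriction on rank-$2$ lattices in $\mathbb{C}$, the group $\overline{\Gamma}$ is cyclic of order $k\in\{1,2,3,4,6\}$ with $\overline{\delta}\colon[z_1]\mapsto[\nu z_1+\theta]$, $\nu^k=1$; cyclicity of $\Gamma$ itself then follows from the standard fact that a finite group of bundle automorphisms acting freely on a principal elliptic bundle over an elliptic curve is cyclic.

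Next I would show $f_\delta$ is affine. In the two-torus case $\hat{M}=\mathbb{C}^2/\Lambda$ this is immediate, since every holomorphic self-map of a complex torus is affine. In the primary Kodaira case, I would impose that $\tilde{\Psi}$ normalize the deck group $G$ of \autoref{primary}: conjugation of $\psi_1$ by $\tilde{\Psi}$ yields a pure $z_2$-translation by $\mu$, forcing $\mu$ into the period lattice, while conjugation of $\varphi_{\lambda'}$ produces an element of $G$ whose $z_2$-component contains $f_\delta(z_1+\lambda')-f_\delta(z_1)$; the condition that this be of the form $\overline{\lambda''}z_1+\beta_{\lambda''}$ forces that difference to be affine in $z_1$ for every $\lambda'\in\Lambda'$, hence $f_\delta''$ is a $\Lambda'$-periodic entire function on $\mathbb{C}$, so $f_\delta''\equiv 0$ and $f_\delta(z_1)=az_1+b$.

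For the relation $\mu\in\langle\nu\rangle$ and $\mathcal{K}_M^{\otimes k}\simeq\mathcal{O}_M$, I would observe that the $G$-invariant form $\eta=dz_1\wedge dz_2$, which trivializes $\mathcal{K}_{\hat{M}}$ by \autoref{canonicaltrivial}, satisfies $\tilde{\Psi}^*\eta=\nu\mu\,\eta$, so the cyclic action of $\Gamma$ on $\mathcal{K}_{\hat{M}}$ is by multiplication by $\nu\mu$. Demanding that $\tilde{\Psi}^k$ lie in the deck group of $\tilde{M}\to\hat{M}$ forces $\mu^k=1$, and after choosing a generator compatibly with $\nu$ one gets $\mu\in\langle\nu\rangle$; then $\eta^{\otimes k}$ descends to a nowhere-vanishing section of $\mathcal{K}_M^{\otimes k}$, giving $\mathcal{K}_M^{\otimes k}\simeq\mathcal{O}_M$.

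Finally, for the two-torus case with $\Gamma$ non-trivial: if $\mu=\nu\neq 1$, the $\mathbb{C}$-linear part of $\tilde{\Psi}-\mathrm{Id}$ on $\mathbb{C}^2$ has determinant $(\nu-1)^2\neq 0$, hence is invertible; for every $\lambda\in\Lambda$ the equation $\tilde{\Psi}(x)-x=\lambda$ admits a unique solution $x\in\mathbb{C}^2$, producing a fixed point of $\Psi$ on $\hat{M}$ and contradicting freeness. The residual case $\mu=\nu=1$ forces $a=0$ (so that $\tilde{\Psi}$ descends to $\hat{M}$) and makes $\tilde{\Psi}$ a translation, absorbed in the trivial case $\hat{M}=M$. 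The main technical obstacle is the bookkeeping in the primary Kodaira normalization of step two (tracking the inhomogeneous terms carefully); the remaining steps reduce to the crystallographic classification and linear fixed-point arithmetic.
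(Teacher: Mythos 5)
Your route is genuinely different from the paper's, and it is worth saying what each buys before pointing at the gaps. You re-derive Kodaira's normal form by hand from the lift formula \eqref{liftPsi} and then trivialize $\mathcal{K}_M^{\otimes k}$ by descending $(dz_1\wedge dz_2)^{\otimes k}$; the paper instead runs a Kodaira-dimension argument ($kod(M)\leq kod(\hat{M})=0$, with $kod(M)=-\infty$ excluded because $M$ would then be a Hopf surface, hence a principal elliptic bundle, hence $\hat{M}=M$, contradicting triviality of $\mathcal{K}_{\hat{M}}$), invokes the Enriques--Kodaira table to get $\mathcal{K}_M^{\otimes k}=\mathcal{O}_M$, and then identifies $q:\hat{M}\to M$ with the \emph{canonical covering} attached to the torsion class of $\mathcal{K}_M$, citing Theorems 38--40 of Kodaira. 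That identification does serious work: it delivers in one stroke that $\Gamma$ is \emph{cyclic} of order $k$ and acts \emph{freely}. Your proposal assumes both. Freeness is not a hypothesis you may import: \autoref{reductionprincipalelliptic} only requires $\Gamma$ to have \emph{smooth quotient}, and freeness is part of the conclusion of the lemma; since your argument for point (2) (solving $\tilde{\Psi}(x)=x$ using that the linear part of $\tilde{\Psi}-\mathrm{id}$ has determinant $(\nu-1)^2\neq 0$) derives a contradiction with freeness, it is circular as written. The paper avoids this by pushing to the local model: it first forces $a=0$ (because $\tilde{\Psi}^k$ must lie in the deck group, whose differentials are trivial, while $d\tilde{\Psi}^k$ has lower-left entry $ka\nu^{k-1}$), then observes that at the fixed point the action is $(u_1,u_2)\mapsto(\nu u_1,\nu u_2)$ with $\nu\neq 1$, whose quotient is a genuine cyclic singularity --- contradicting smoothness of $M$, which is all that is hypothesized. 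Moreover your ``standard fact'' that a finite group acting freely by bundle automorphisms on a principal elliptic bundle over an elliptic curve is cyclic is false: the Bagnera--de Franchis list contains free actions of $\mathbb{Z}/2\oplus\mathbb{Z}/2$ and $\mathbb{Z}/3\oplus\mathbb{Z}/3$ on two-tori fibered over elliptic curves. These are excluded in the present setting only because $a(M)=1$ (bielliptic quotients are algebraic), or, as in the paper, because $q$ is the cyclic canonical covering --- either way an argument is needed and is missing.

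Two further gaps. In your primary-Kodaira normalization, ``$f_\delta''$ is $\Lambda'$-periodic entire, so $f_\delta''\equiv 0$'' is a non sequitur: periodicity with respect to a rank-two lattice plus Liouville only gives $f_\delta''\equiv c$ constant, i.e.\ $f_\delta$ quadratic; the quadratic term is killed only by matching the coefficient of $z_1$ in $\tilde{\Psi}\varphi_{\lambda'}\tilde{\Psi}^{-1}=\varphi_{\nu\lambda'}$, which yields a relation of the form $\alpha\overline{\lambda'}+\beta\lambda'=0$ for all $\lambda'\in\Lambda'$ and hence $c=0$, because $(\lambda',\overline{\lambda'})$ spans $\mathbb{C}^2$ over $\mathbb{R}$ as $\lambda'$ runs through the rank-two lattice. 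Finally, your descent of $\eta^{\otimes k}$ is a nice shortcut to $\mathcal{K}_M^{\otimes k}\cong\mathcal{O}_M$, but it does not give the stated dichotomy: you must still show that $\hat{M}\neq M$ forces $k\geq 2$, i.e.\ that $\mathcal{K}_M$ is itself nontrivial. If $\mathcal{K}_M$ were trivial, $M$ would be a two torus, a primary Kodaira surface, or a K3 surface; the first two are principal elliptic bundles, forcing $\hat{M}=M$ by the construction \eqref{finitecover}, and K3 is excluded since it is simply connected and so admits no nontrivial quotient presentation with $q$ unramified. This closing step, which the paper handles through the classification, appears nowhere in your write-up.
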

\begin{proof}\begin{enumerate}
    \item If $M$ is a principal elliptic bundle then $\hat{M}=M$ by construction of $\hat{M}$. We then suppose that $M$ is not a principal elliptic bundle and $\Gamma$ is not trivial. Moreover $kod(M)\leq kod(\hat{M})=0$. Since $M$ is minimal, by the Enriques-Kodaira classification (see \cite{BarthPeters}, Table 10 p.189), if $kod(M)=0$, then $\mathcal{K}_M^{\otimes k}=\mathcal{O}_M$ for some integer $k\geq 2$. Since $\mathcal{K}_{\hat{M}}$ is trivial, $q:\hat{M}\longrightarrow M$ is isomorphic to the unramified covering associated with $\mathcal{K}_M$, and the formula \eqref{PsisecondaryKodaira} follows from the proof of Theorem 38 in \cite{KodairaSII}. If $kod(M)=-\infty$,  then $M$ is a Hopf surface with $a(M)=1$, and therefore a principal elliptic bundle through its algebraic reduction. In particular, it has no singular fiber so that $\hat{M}=M$. This contradicts $\mathcal{K}_{\hat{M}}=0$ so necessarly $kod(M)=0$. In particular $\mathcal{K}_M^{\otimes k}$ is trivial for some $k\geq 1$ and the formula \eqref{PsisecondaryKodaira} can be recovered from Theorem 39 and Theorem 40 in \cite{KodairaSII}.

    \item Suppose $\mu=\nu$. By \autoref{lemmaautomorphismscovering}, $\tilde{\Psi}^k$ belongs to the subgroup spanned by the automorphisms $\Psi_1,\Psi_2$ and $(\varphi_{\lambda'})_{\lambda'\in \Lambda'}$, where $k$ is the order of $\Gamma$. Suppose $k>1$. The matrix of $d\tilde{\Psi}$ in the basis $(\der{}{z_1},\der{}{z_2})$ and $(\tilde{\Psi}^* \der{}{z_1},\tilde{\Psi}^*\der{}{z_2})$ is: $$\begin{pmatrix} \nu & 0 \\ a & \nu \end{pmatrix}$$, while the matrix of any element $\varphi$ in the subgroup spanned by $\Psi_1,\Psi_2$ and the $\varphi_{\lambda'}$ in $(\der{}{z_1},\der{}{z_2})$ and $(\varphi^* \der{}{z_1},\varphi^* \der{}{z_2})$ is the identity. 
    
    We get immediately $a=0$. Hence, $\tilde{\Psi}$ is the product of the automorphisms $z_1\mapsto \nu z_1 +\theta$ and $z_2 \mapsto \nu z_2+b$, with $\nu\neq 1$. Hence, there exists on $\tilde{M}=\mathbb{C}^2$ an isolated fixed point $\tilde{x}_0=(z_1^0,z_2^0)$. The coordinates $u_1=z_1-z_1^0$ and $u_2=z_2-z_2^0$ identify a neighborhood $\tilde{U}$ of $\tilde{x}_0$, invariant by $\tilde{\Psi}$, with $D(0,1)\times D(0,1)$. It conjugates the action of $\tilde{\Psi}$ and the action of the automorphism $\Psi_1$ of $D(0,1)\times D(0,1)$ defined by: $$\Psi_1(u_1,u_2)=(\nu u_1,\nu u_2)$$ For a suitably small $\tilde{U}$, $\tilde{p}$ restricts as a biholomorphism between $\tilde{U}$ and an open neighborhood $\hat{U}$ of $\hat{x}_0=\tilde{p}(\tilde{x}_0)$. Then $q(\hat{U})$ is an open neighborhood of some point $x_0\in M$, which is isomorphic to the analytic space obtained as the quotient of $D(0,1)\times D(0,1)$ by the subgroup spanned by $\Psi_1$ as above. It is clear since $\nu\neq 1$, that this space is not smooth, contradicting $(i)$ in \autoref{lemmaautomorphismscovering}. Hence $\mu=\nu$ implies $k=1$ that is $\Gamma$ is the trivial group.
\end{enumerate}

\end{proof}
 \begin{definition}\label{secondary} A secondary Kodaira surface is an elliptic surface $M\overset{\pi}{\longrightarrow}N$ which admits a primary Kodaira surface $\hat{M}\overset{\hat{\pi}}{\longrightarrow}\hat{N}$ as a finite unramified cover. \end{definition} 

 Hence, the classification of (non-trivial) quotients of meromorphic affine primary Kodaira surfaces is reduced to the classification of meromorphic affine secondary Kodaira surfaces.

The two following lemmas will be useful to simplify the invariance equations corrsponding to $\tilde{\Psi}^\star \tilde{\nabla}=\tilde{\nabla}$: 

 \begin{lemma}\label{lemmafunctionsinvolution} Let $\nu\in \mathbb{C}^\times \setminus \{1\}$ and $\theta \in \mathbb{C}$ such that $z_1 \mapsto \nu z_1 + \theta$ is an automorphism of the elliptic curve $\mathbb{C}/\Lambda'$, $r:\mathbb{C}/\Lambda' \longrightarrow \mathbb{P}^1$ the quotient by the subgroup spanned by this automorphism, and $\wp_0=Z_1\circ r$ where $Z_1$ is any primitive element in the field of meromorphic functions on $\mathbb{P}^1$. Then, for any integer $k\geq 0$ the set of $\Lambda'$-elliptic functions satisfying: \begin{equation} \label{propertyfunctioninvolution}f(\nu z_1+\theta)=\frac{1}{\nu^k}f(z_1)\end{equation} is $\mathbb{C}\left(\wp_0\right) \wp^{(k)}_0$.
 
 \end{lemma}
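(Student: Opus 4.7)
The plan is to use $\wp_0^{(k)}$ as a reference element satisfying the required transformation law, and then reduce the classification to the well-understood problem of meromorphic functions on $\mathbb{P}^1$.

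First I would verify that $\wp_0^{(k)}$ itself satisfies \eqref{propertyfunctioninvolution}. By construction $\wp_0 = Z_1\circ r$ is $\sigma$-invariant, where $\sigma(z_1) = \nu z_1 + \theta$, so $\wp_0(\nu z_1 + \theta) = \wp_0(z_1)$. Differentiating this identity once with respect to $z_1$ and applying the chain rule gives $\nu\,\wp_0'(\nu z_1 + \theta) = \wp_0'(z_1)$, i.e.\ $\wp_0' \circ \sigma = \nu^{-1}\wp_0'$. Iterating $k$ times yields $\wp_0^{(k)}\circ \sigma = \nu^{-k}\wp_0^{(k)}$, as required. One also needs that $\wp_0^{(k)} \not\equiv 0$; this holds because $\wp_0$ is non-constant and $\Lambda'$-periodic, hence not a polynomial in $z_1$, so none of its derivatives vanish identically. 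Moreover, any rational function $R(\wp_0)\in \mathbb{C}(\wp_0)$ is $\sigma$-invariant, so $R(\wp_0)\cdot \wp_0^{(k)}$ automatically satisfies \eqref{propertyfunctioninvolution}. This establishes the inclusion $\mathbb{C}(\wp_0)\wp_0^{(k)} \subset \{f : f\circ\sigma = \nu^{-k}f\}$.

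For the reverse inclusion, given a $\Lambda'$-elliptic $f$ with $f\circ \sigma = \nu^{-k} f$, I would form the quotient $g := f/\wp_0^{(k)}$. It is meromorphic on $\mathbb{C}/\Lambda'$ (quotient of two $\Lambda'$-elliptic functions), and the transformation factors cancel:
\begin{equation*}
g\circ\sigma = \frac{f\circ\sigma}{\wp_0^{(k)}\circ\sigma} = \frac{\nu^{-k}f}{\nu^{-k}\wp_0^{(k)}} = g,
\end{equation*}
so $g$ is $\sigma$-invariant. Hence $g$ descends through $r$ to a meromorphic function $\overline{g}$ on $\mathbb{P}^1$. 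Since $Z_1$ is a primitive element for $\mathcal{M}(\mathbb{P}^1)$, one has $\overline{g} = R(Z_1)$ for some $R \in \mathbb{C}(X)$. Pulling back through $r$ gives $g = R(\wp_0)$, and therefore $f = R(\wp_0)\,\wp_0^{(k)} \in \mathbb{C}(\wp_0)\wp_0^{(k)}$.

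There is no serious obstacle: the argument is essentially the descent of $\sigma$-invariant elliptic functions through the ramified quotient $r : \mathbb{C}/\Lambda' \longrightarrow \mathbb{P}^1$, combined with the chain-rule computation that identifies $\wp_0^{(k)}$ as a canonical solution of the transformation equation. The only point requiring a small verification is the non-vanishing of $\wp_0^{(k)}$, handled by the periodicity remark above.
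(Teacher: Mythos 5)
Your proof is correct and follows essentially the same route as the paper: differentiate the $\sigma$-invariance of $\wp_0$ to see that $\wp_0^{(k)}$ transforms with the factor $\nu^{-k}$, then divide an arbitrary solution $f$ by $\wp_0^{(k)}$ and observe that the quotient is a $\sigma$-invariant elliptic function, hence lies in $r^\#\mathbb{C}(\mathbb{P}^1)=\mathbb{C}(\wp_0)$. Your explicit check that $\wp_0^{(k)}\not\equiv 0$ is a small point the paper leaves implicit, but otherwise the two arguments coincide.
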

 
 \begin{proof}  Since $\mathbb{C}(\mathbb{P}^1)=\mathbb{C}(Z_1)$, we obviously have: $$r^\#\mathbb{C}(\mathbb{P}^1)=\mathbb{C}\left(\wp_0\right)$$ In one other hand, by definition of $r$, $r^\#\mathbb{C}(\mathbb{P}^1)$ is the subset of $\Lambda'$-elliptic functions invariant through the automorphism from the statement.

  In particular, derivating the invariance equation for $\wp_0$ fives : \begin{equation} \wp^{(k)}_0(\nu z_1 + \theta) = \frac{1}{\nu}\wp^{(k)}_0(z_1)  \end{equation} that is $\wp^{(k)}_0$ is a  $\Lambda'$-elliptic function satisfying \eqref{propertyfunctioninvolution}.  

  Now let $f$ be any function as in the statement. Then $g=\frac{f}{\wp^{(k)}_0}$ is an $\Lambda$-elliptic function which is invariant through the automorphism from the statement. Hence, $g\in \mathbb{C}\left(\wp_0\right)$ and finally $f\in \mathbb{C}\left( \wp_0\right) \wp^{(k)}_0$.

 \end{proof}

\begin{lemma}\label{lemmaindependancepowers} Let $\nu$ be a non trivial root of the unity and $\theta\in \mathbb{C}$ such that $\delta:z_1\mapsto \nu z_1 + \theta$ is a finite automorphism of $\mathbb{C}/\Lambda'$. Suppose $k\geq 1$ and $(h_{i,1},h_{i,0})_{i=0,\ldots,k}$ are elements of $\mathcal{E}_0$ such that: \begin{equation}\label{relationpowers} \underset{i=0}{\overset{k}{\sum}} h_{i,1}(z_1)(\alpha \zeta + \beta z_1)^i(\delta \cdot z_1)  + h_{i,0}(z_1) (\alpha \zeta + \beta z_1)^i(z_1)=0 \end{equation}
Then $h_{0,0}=0$ and $h_{i,0}=-\nu^i h_{i,1}$ for $i=1,\ldots,k$. Moreover, if there exists $i\in \{1,\ldots,k\}$ with $h_{i,0}\neq 0$ then $\theta=0$.
\end{lemma}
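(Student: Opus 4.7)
The plan is to view the hypothesized relation as a polynomial identity in the two independent transcendentals $\zeta(z_1)$ and $z_1$ over the subfield $\mathcal{E}_0$, then extract coefficient equations and analyze them by descending induction on the total degree.

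First I would rewrite $\mathcal{Z}(\delta z_1)$ in terms of $\zeta(z_1)$ and $z_1$. Because $\delta$ descends to $\mathbb{C}/\Lambda'$ one has $\nu\Lambda' = \Lambda'$, so the scaling property of the Weierstrass zeta function gives $\zeta(\nu w;\Lambda') = \nu^{-1}\zeta(w;\Lambda')$. Moreover the function $\phi_0(z_1) := \zeta(z_1+\theta/\nu) - \zeta(z_1)$ lies in $\mathcal{E}_0$, since the quasi-period $\chi_\zeta$ cancels in the difference. Combining these gives $\mathcal{Z}(\delta z_1) = \nu^{-1}\alpha\,\zeta(z_1) + \nu\beta\, z_1 + \phi(z_1)$ with $\phi := \alpha\nu^{-1}\phi_0 + \beta\theta \in \mathcal{E}_0$, and $\phi \equiv 0$ if and only if $\theta \equiv 0 \pmod{\Lambda'}$ (so, after reducing $\theta$ modulo $\Lambda'$, iff $\theta = 0$).

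Second, I would invoke that $\zeta$ and $z_1$ are algebraically independent over $\mathcal{E}_0$. Since $\mathcal{E}_0 = \mathbb{C}(\wp,\wp')$ is differentially closed under $d/dz_1$, if either $\zeta$ or $z_1$ were algebraic over the other a standard minimal-polynomial-plus-derivative argument would force $\wp$ (respectively $1$) to have a primitive inside $\mathcal{E}_0$ (respectively $\mathcal{E}_0(\zeta)$); but such primitives would render $\zeta$ (resp.\ $z_1$) elliptic, contradicting $\chi_\zeta \not\equiv 0$ and $\chi_{z_1} \not\equiv 0$. With this in hand, the substitution turns the hypothesis into a polynomial identity in $\zeta, z_1$ over $\mathcal{E}_0$, and every coefficient of $\zeta^a z_1^b$ must vanish.

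The top-degree coefficients ($a + b = k$) read $\binom{k}{a}\alpha^a\beta^{k-a}\bigl(\nu^{k-2a}h_{k,1}+h_{k,0}\bigr) = 0$ for $a=0,\ldots,k$, and a short case distinction on $(\alpha,\beta)$ forces $h_{k,0} = -\nu^k h_{k,1}$: if $\alpha\beta \neq 0$ and $\nu \neq -1$, two different values of $a$ give incompatible relations unless $h_{k,1}=h_{k,0}=0$; if $\nu = -1$ or one of $\alpha,\beta$ vanishes, the relation is immediate. The same method applied at total degree $k-1, k-2,\ldots$ produces equations of the shape $h_{j,0} + \nu^j h_{j,1} = \phi \cdot \bigl(\text{$\mathcal{E}_0$-linear combination of the } h_{i,1},\, i > j\bigr)$. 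When $\theta \neq 0$ (so $\phi \neq 0$), cascading from the top forces each $h_{i,1}$ and hence each $h_{i,0}$ with $i \geq 1$ to vanish; when $\theta = 0$, the right-hand side is identically zero and we recover $h_{i,0} = -\nu^i h_{i,1}$ for every $i = 1,\ldots,k$. The constant-coefficient equation combined with the freedom in the $i=0$ decomposition ($h_{0,0}$ and $h_{0,1}$ both multiply $1$) yields $h_{0,0}=0$.

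The main obstacle I anticipate is the combinatorial bookkeeping in the descending induction: at degree $j$, the coefficient of $\zeta^a z_1^{j-a}$ involves trinomial-weighted contributions of $h_{i,1}\phi^{i-j}$ for every $i > j$, and verifying that the cascading argument cleanly separates the two regimes $\phi = 0$ and $\phi \neq 0$ requires careful tracking. A secondary subtlety is making the algebraic-independence step rigorous; I would handle it by explicitly observing that applying $d/dz_1$ to any alleged relation and then differencing by a $\Lambda'$-translate produces a strictly lower-degree relation, allowing an induction on the degree.
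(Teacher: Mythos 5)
Your first two steps are sound and can be made rigorous: the transformation rule $\mathcal{Z}(\delta z_1)=\nu^{-1}\alpha\zeta(z_1)+\nu\beta z_1+\phi(z_1)$ with $\phi\in\mathcal{E}_0$ does follow from $\nu\Lambda'=\Lambda'$ and the homogeneity $\zeta(\nu w)=\nu^{-1}\zeta(w)$, and the algebraic independence of $\zeta$ and $z_1$ over $\mathcal{E}_0$ follows from your differencing-by-translates argument, since by Legendre's relation the pairs $(\eta(\lambda'),\lambda')$ for two generators of $\Lambda'$ span $\mathbb{C}^2$, so differencing a minimal relation produces a nontrivial shorter one. This is a genuinely different and more transparent route than the paper's, which argues by induction on $k$ using differentiation plus a residue count for $k=1$ and the operators $\delta_{\lambda'}$ for the step. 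Your top-degree equations $\binom{k}{a}\alpha^a\beta^{k-a}\bigl(\nu^{k-2a}h_{k,1}+h_{k,0}\bigr)=0$ are also correct; note that in the only regimes admitting a nonzero solution ($\alpha=0$, $\beta=0$, or $\nu=-1$) they give $h_{k,0}=-\nu^{k}h_{k,1}$ or $h_{k,0}=-\nu^{-k}h_{k,1}$, so the exponent in the paper's statement is a convention issue except when $\nu=\pm1$. (One caveat on your step 1: for $\theta\in\Lambda'\setminus\{0\}$ one gets $\phi=\alpha\eta(\theta)+\beta\theta$, which can vanish for special $(\alpha,\beta)$; your biconditional ``$\phi=0$ iff $\theta\equiv 0$'' needs the normalization $\chi_{\mathcal{Z}}(\lambda')=\overline{\lambda'}$ used in the application.)

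The genuine gap is the cascade. At total degree $j$ your coefficient equations determine only the combinations $h_{j,0}+\nu^{j-2a}S_j$, where $S_j=\sum_{i\geq j}\binom{i}{j}h_{i,1}\phi^{i-j}$; in the single-equation regimes ($\alpha=0$, $\beta=0$, or $\nu=-1$ --- exactly the regimes that survive the top-degree analysis) the fresh unknowns $h_{j,1},h_{j,0}$ absorb the $\phi$-contributions, so when $\phi\neq0$ nothing forces the higher $h_{i,1}$ to vanish. In fact no argument can close this gap, because the ``moreover'' clause fails as stated: take $\nu=-1$ (an automorphism of every $\mathbb{C}/\Lambda'$), $\alpha=1$, $\beta=0$ and any $\theta\neq0$; since $\zeta(\nu z_1)=\nu^{-1}\zeta(z_1)$,
\begin{equation*}
\zeta(\nu z_1+\theta)-\nu^{-1}\zeta(z_1)-\phi(z_1)=0,\qquad \phi(z_1)=\zeta(\nu z_1+\theta)-\zeta(\nu z_1)\in\mathcal{E}_0,
\end{equation*}
is an instance of \eqref{relationpowers} with $k=1$, $h_{1,1}=1$, $h_{1,0}=-\nu^{-1}\neq0$, $h_{0,1}=0$, $h_{0,0}=-\phi\neq0$; for $k\geq2$ and a CM lattice one similarly violates even $h_{1,0}=-\nu h_{1,1}$ (e.g.\ $h_{2,1}=1$, $h_{1,1}=0$, $h_{1,0}=-2\nu^{-1}\phi$). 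So the honest output of your method is the weaker family of identities $h_{j,0}=-\nu^{\pm j}S_j$, and the conclusions involving $\theta$ need input beyond \eqref{relationpowers} (in the application it comes from the coupling between the $h_{i,1}$ and $h_{i,0}$ imposed by the invariance equations). You are in good company: the paper's own proof breaks at the same configuration --- its claim that the residues of $\mathcal{Z}(\delta z_1)+c\,\mathcal{Z}(z_1)$ ``sum to a nonzero value'' silently excludes $c=-\nu^{-1}$, which is precisely the solution above, and its inductive step discards the cross terms of $(\nu\mathcal{Z}+\phi)^{k+1}$ --- but your cascade, as written, is not a proof either. A last small point: only the sum $h_{0,1}+h_{0,0}$ is constrained by the constant coefficient, so $h_{0,0}=0$ by itself cannot be extracted, contrary to your final sentence.
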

\begin{proof} We proceed by induction on $k\in \mathbb{N}_{\geq 1}$. 

Suppose the relation \eqref{relationpowers} holds for $k=1$. If $h_{1,1}=0$, then clearly $h_{1,1}=h_{1,0}=h_{0,0}=0$ since $(\alpha \zeta+\beta z_1)$ is not an element of $\mathcal{E}_0$. Thus, we can assume, without loss of generality, that $h_{1,1}=1$. Then derivating \eqref{relationpowers} gives : $$\nu(\alpha \wp + \beta)(\nu z_1+\theta) + \der{h_{1,0}}{z_1}(\alpha \zeta +\beta z_1)(z_1) + h_{1,0}(\alpha \wp + \beta) + h_{0,0}=0 $$  As before this implies $h_{1,0}=c\in \mathbb{C}$. But then, \eqref{relationpowers} becomes: $$(\alpha \zeta + \beta z_1)(\nu z_1 + \theta)+ c(\alpha \zeta + \beta z_1)(z_1) = -h_{0,0}$$ If $\theta\not \in \Lambda'$, then the left handside has residues summing to a nonzero value. This is impossible by the second Liouville's theorem. Hence $\theta\not \in \Lambda'$ implies $h_{1,1}=h_{1,0}=h_{0,0}=0$.

If $\theta\in \Lambda'$, then $(\alpha \zeta + \beta z_1)(\delta z_1)=\nu (\alpha \zeta + \beta z_1)(z_1)$. Again, since $(\alpha \zeta + \beta z_1)$ is not an element of $\mathcal{E}_0$, we get that \eqref{relationpowers} implies $h_{0,0}=0$ and $h_{1,0}=-\nu h_{1,1}$.

Suppose the lemma is true for $k\in \mathbb{N}_{\geq 1}$, and suppose \eqref{relationpowers} holds for $k+1$ in place of $k$. Applying the operator $\delta_{\lambda'}$ (see \eqref{E0}) to this relation gives: $$\forall \overline{\lambda'}\in \Lambda',\, \overline{\lambda'}^{k+1}(\nu^{k+1}h_{k+1,1}+h_{k+1,0}) + \overline{\lambda'}^k f_k + \ldots + \overline{\lambda'}f_1 + f_0 =0 $$ where $f_0,\ldots,f_k$ are $\mathbb{C}$-linear combinations of $h_{j,0}$, $h_{j,1}$ and $(\alpha \zeta + \beta z_1)^i( z_1)$  with $i,j\leq k$. In particular we get $\nu^{k+1}h_{k+1,1}+h_{k+1,0}=0$ and:   \begin{equation}\label{powersk} \underset{i=0}{\overset{k}{\sum}} h_{i,1}(z_1)(\alpha \zeta + \beta z_1)^i(\delta \cdot z_1)  + h_{i,0}(z_1) (\alpha \zeta + \beta z_1)^i(z_1)=0 \end{equation} By induction hypothesis, we get that $h_{i,0}=\nu^i h_{i,1}$ for $i=1,\ldots,k$. 

Finally, if there exists $i\in \{1,\ldots,k\}$ such that $h_{i,0}\neq 0$, then the induction hypothesis implies $\theta\in \Lambda'$. Also, by \eqref{powersk}, if $h_{k+1,0}\neq 0$ and $\theta\not \in \Lambda'$, then by the second Livouille's theorem $h_{k+1,1}(\alpha \zeta +\beta z_1)^{k+1}(\nu z_1+ \theta) + h_{k+1,0}(\alpha \zeta + \beta z_1)^{k+1}(z_1)$  have non trivial poles at the classes of $0$ and $\theta$ in $\mathbb{C}/\Lambda'$.

 \end{proof}
 
 \begin{theorem}\label{classificationsecondary} Let $M\overset{\pi}{\longrightarrow}N$ be a minimal meromorphic affine elliptic surface with $a(M)=1$, and suppose that the elliptic surface $\hat{M}\overset{\hat{\pi}}{\longrightarrow} \hat{N}$ from \eqref{finitecover} is a primary Kodaira surface. Denote by $p:\tilde{M}\longrightarrow \hat{M}$ the universal covering of $\hat{M}$ and $(z_1,z_2)$ coordinates as in \autoref{coordinatesKodaira}. Then: \begin{enumerate}
     \item\label{item1secondary} $M$ is the quotient of $\hat{M}$ by a cyclic group $\Gamma$, spanned by an element $\Psi$, which lifts to an automorphism $\tilde{\Psi}$ of $\tilde{M}$ of the form: \begin{equation}\label{tildePsisecondary} \tilde{\Psi}(z_1,z_2)=(\nu z_1 + \theta, \mu z_2 + bz_1 + c)\end{equation} where $z_1\mapsto \nu z_1 + \theta$ is an automorphism of the elliptic curve $\hat{N}=\mathbb{C}/\Lambda'$, $\mu$ is a power of $\nu$, and  $b,c\in \mathbb{C}$. Moreover $\nu=1$ if and only if $\hat{M}=M$.
     \item Suppose that $\nu\neq 1$ as in \autoref{item1secondary}  (i.e. $M$ is a secondary Kodaira surface). Let $\wp_0$ defined as in \autoref{lemmafunctionsinvolution}. Then the map $\nabla \mapsto \tilde{\nabla}=\tilde{p}^\star q^\star \nabla$ is a bijection between the set of meromorphic affine connections on $M$ and the set of meromorphic affine connections on $\tilde{M}$ with one of the following matrix forms in $(\der{}{z_1},\der{}{z_2})$: 
\vspace{1\baselineskip}
     
     \begin{enumerate}[label=\alph*) ]
         \item  \underline{if $\mu=\nu^2=1$ and $\theta=0$:}

        $$ dz_1 \otimes \begin{pmatrix} \gamma_{11}&0\\ (\alpha \zeta +\beta z_1)(\gamma_{11}-\gamma_{22}+\delta_{21})+\frac{a}{1-\nu}\delta_{21}+\gamma_{21} & \gamma_{22}  \end{pmatrix}  + dz_2 \otimes \begin{pmatrix}0 & 0 \\ \delta_{21} & 0 \end{pmatrix}$$ with $\gamma_{ii},\delta_{21} \in \mathbb{C}\left(  \wp_0\right)\wp'_0$ and $\gamma_{21}\in \mathbb{C}\left( \wp_0 \right) \wp''_0$.
        \vspace{1\baselineskip}
         
         \item \underline{if $\mu\neq 1$ or $\nu^2\neq 1$ or $\theta\neq 0$:}
         
         $$  dz_1 \otimes \begin{pmatrix} \gamma_{11}&0\\ \gamma_{21} & \gamma_{11} \end{pmatrix} $$ with $\gamma_{11}\in \mathbb{C}\left(  \wp_0\right)\wp'_0$ and $\gamma_{21}\in \mathbb{C}\left(\wp_0\right) \wp''_0$,

     \end{enumerate}
 \end{enumerate}
\vspace{1\baselineskip}

 In particular, there always exists meromorphic affine flat connections on any secondary Kodaira surfaces.

 \end{theorem}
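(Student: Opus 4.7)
The first assertion is essentially a restatement of \autoref{lemmasecondary}: once we know that $M$ is a quotient of $\hat{M}$ by a finite subgroup of the elliptic fibration automorphisms, \autoref{reductionprincipalelliptic} gives the lift $\tilde{\Psi}$ of the form \eqref{tildePsisecondary}, and the condition $\nu=1$ translates to $\tilde{\Psi}$ preserving fibers of $z_1$, hence acting trivially on $\hat{N}$. This reduces the theorem to the analysis of the $\tilde{\Psi}$-invariance equation $\tilde{\Psi}^\star \tilde{\nabla} = \tilde{\nabla}$ imposed on top of the classification of $\tilde{\nabla}$ already obtained in \autoref{matricesKodaira}.

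The plan for part (2) is to start with a meromorphic affine connection $\hat{\nabla}$ on $\hat{M}$ and, using \autoref{matricesKodaira}, split into the two cases given by \textbf{form I} ($g_{12}\neq 0$) and \textbf{form II} ($g_{12}=0$). For each case, the matrix of $\tilde{\nabla}$ is expanded in powers of $\mathcal{Z}=\alpha\zeta+\beta z_1$ with coefficients in $\mathcal{E}_0$, and the invariance $\tilde{\Psi}^\star \tilde{\nabla}=\tilde{\nabla}$ translates—after computing the conjugation of the Christoffel matrix by $d\tilde{\Psi}=\bigl(\begin{smallmatrix} \nu & 0 \\ b & \mu \end{smallmatrix}\bigr)$ and pulling back $dz_1, dz_2$—into a system of functional equations relating the values of the $\Lambda'$-elliptic coefficients at $z_1$ and at $\nu z_1+\theta$, together with relations involving $\mathcal{Z}(\nu z_1+\theta)$ and $\mathcal{Z}(z_1)$. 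Applying \autoref{lemmaindependancepowers} to these relations, which isolates the admissible degrees of dependence on $\mathcal{Z}$, will show that \textbf{form I} is incompatible with $\nu\neq 1$: the cubic, quadratic and linear terms in $\mathcal{Z}$ force $g_{12}=0$ and collapse \textbf{form I} onto the zero connection. Thus only \textbf{form II} survives the quotient.

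For \textbf{form II}, the invariance equation reduces to functional equations for the $\Lambda'$-elliptic coefficients $g_{11},g_{22},f_{12},\gamma_{ij},\delta_{21}$ of the shape $h(\nu z_1+\theta)=\nu^{-k}h(z_1)$ for an appropriate integer $k$ depending on the entry, as well as rigidity relations forced by \autoref{lemmaindependancepowers} (which, for $\theta\notin\Lambda'$ or for powers of $\mathcal{Z}$, forbids non-zero solutions of certain shapes). \autoref{lemmafunctionsinvolution} then identifies the spaces of solutions with $\mathbb{C}(\wp_0)\wp_0^{(k)}$, producing the explicit entries listed in cases a) and b). The bifurcation into a) and b) arises precisely from the fact that, when $\mu=\nu^2=1$ and $\theta=0$, the constraint $\mu=\nu^2$ opens up additional room for the off-diagonal entry $\delta_{21}$ and distinct diagonal coefficients $\gamma_{11}\neq \gamma_{22}$, whereas in the remaining cases \autoref{lemmaindependancepowers} together with the equation $\mu\neq \nu$ from \autoref{lemmasecondary}(2) forces $g_{11}=g_{22}=f_{12}=\delta_{21}=0$ and $\gamma_{11}=\gamma_{22}$.

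The main obstacle will be the bookkeeping of the invariance equations in \textbf{form I}: the cubic polynomial $f_{21}$ in $\mathcal{Z}$ with coefficients subject to the system \eqref{coeff21matrix1Kodaira} produces a long linear combination of monomials $(\alpha\zeta+\beta z_1)^i(\nu z_1+\theta)^j$ whose elimination requires a careful induction on the degree. Once this elimination is complete, the remaining verifications—the identification of the solution space via \autoref{lemmafunctionsinvolution}, the case split, and the existence of flat meromorphic affine connections (obtained by picking all free coefficients equal to zero, yielding the trivial connection as a particular flat meromorphic affine connection on $M$)—are routine.
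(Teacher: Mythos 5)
Your overall strategy coincides with the paper's: part (1) is obtained from \autoref{lemmasecondary}, and part (2) by imposing the $\tilde{\Psi}$-invariance on the two matrix forms of \autoref{matricesKodaira}, eliminating \textbf{form I} and then solving the resulting functional equations for \textbf{form II} by means of \autoref{lemmaindependancepowers} and \autoref{lemmafunctionsinvolution}. The elimination of \textbf{form I} (you phrase it as forcing $g_{12}=0$, the paper derives $\nu^{3}/\mu=\nu^{2}/\mu=1$ hence $\nu=1$) and the identification of the solution spaces with $\mathbb{C}(\wp_0)\wp_0^{(k)}$ are exactly the paper's steps.

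There is, however, one step that would fail as written. You justify the collapse onto case b) partly by invoking ``the equation $\mu\neq\nu$ from \autoref{lemmasecondary}(2)''. That clause of \autoref{lemmasecondary} is stated, and proved, only under the hypothesis that $\hat{M}$ is a two torus; in the present theorem $\hat{M}$ is a primary Kodaira surface, so $\mu=\nu$ is not excluded a priori and must be treated. The paper handles it inside the proof: when $\mu=\nu$, either $\delta_{21}=0$, or the functional equation satisfied by $\gamma_{21}/\delta_{21}$ forces $(\nu-\nu^{-2})^{k-1}+\cdots+(\nu-\nu^{-2})+1=0$ where $k\in\{2,3\}$ is the order of $\nu$, which is impossible; hence $\delta_{21}=0$ in that subcase. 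Without this (or an equivalent) argument your case analysis is incomplete. Two smaller imprecisions: $\nu=1$ does not mean $\tilde{\Psi}$ acts trivially on $\hat{N}$ --- it acts by a translation --- but the relevant point is that $\overline{q}:\hat{N}\to N$ is then unramified, so $M$ has no multiple fiber and $\hat{M}=M$ by the very construction of $\hat{M}$; and the vanishing $g_{11}=g_{22}=f_{12}=0$ holds in \emph{both} cases a) and b) (it follows from $\nu\neq 1$ alone), not only in the ``remaining'' case b).
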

 \begin{proof}\begin{enumerate}
     \item It is a consequence of \autoref{lemmasecondary}  and the definition of $\hat{M}$ in \autoref{reductionprincipalelliptic}. Indeed, $\nu =1$ implies that $\overline{q}:\hat{N}\longrightarrow N$ is an unramified covering, that is $M$ have no multiple singular fiber. By definition this implies $\hat{M}=M$.
     \item  Let $\nabla$ be a meromorphic affine connection on a secondary Kodaira surface $M\overset{\pi}{\longrightarrow}N$, and $\tilde{\nabla}$ the corresponding pullback to the universal covering $\tilde{M}$ of the primary Kodaira surface $\hat{M}$. Recall that the matrix of $\tilde{\nabla}$ in $(\der{}{z_1},\der{}{z_2})$ was described in \autoref{matricesKodaira}.

     Suppose that $\tilde{\nabla}$ has \textbf{form I} in \autoref{matricesKodaira} and let $\tilde{\Psi}$ be as in 1, so that $\nu \neq 1$. Then by \autoref{lemmaindependancepowers}, the equations corresponding to $\tilde{\Psi}^\star \tilde{\nabla}=\tilde{\nabla}$ imply: $$\left\{ \begin{array}{ccc} \nu (\alpha \zeta + \beta z_1)^2(\nu z_1+\theta)g_{12}(\nu z_1+\theta)&=&(\alpha \zeta + \beta z_1)^2(z_1) g_{12}(z_1)  \\ &&\\ (\alpha \zeta + \beta z_1)(\nu z_1 + \theta) g_{12}(\nu z_1 + \theta) &=& (\alpha \zeta + \beta z_1)(z_1) g_{12}(z_1) \\ && \\ \frac{\mu}{\nu}g_{12}(\nu z_1 + \theta)&=& g_{12}(z_1) \end{array}\right. $$ Since $g_{12}\neq 0$, using again \autoref{lemmaindependancepowers} we get: $$\left\{ \begin{array}{ccc} \frac{\nu^3}{\mu}&=&1 \\ && \\ \frac{\nu^2}{\mu}&=& 1 \end{array}\right.$$ so that $\nu=1$, that is $M$ is a primary Kodaira surface by 1. This contradicts our assumption on $M$.

     Hence $\tilde{\nabla}$ has \textbf{form II} in \autoref{matricesKodaira}. In this case, using the notation $\mathcal{Z}(z_1)=\alpha \zeta + \beta z_1$, the equations corresponding to $\tilde{\Psi}^\star \tilde{\nabla} = \tilde{\nabla}$ are: \begin{equation}\label{invariancesecondary2} \left\{ \begin{array}{lcl}    \mu g_{ii}(\nu z_1 + \theta)&=& g_{ii}(z_1)\\ && \\f_{12}(\nu z_1 + \theta)   &=& f_{12}(z_1)   \\ && \\   -\nu(g_{22}-g_{11})(z_1)(\mathcal{Z})(\nu z_1 + \theta) &=& (g_{22}-g_{11})(z_1)\mathcal{Z}(z_1)   \\ + \mu \nu \delta_{21}(\nu z_1 + \theta) &&+ \delta_{21}(z_1)\\&&\\ -(g_{ii}+(-1)^i \nu f_{12})(z_1)\mathcal{Z}(\nu z_1 + \theta)&=& -(g_{ii}+(-1)^i f_{12})(z_1) \mathcal{Z}(z_1) \\ + \nu \gamma_{ii}(\nu z_1 + \theta) +\frac{a}{\mu}g_{ii}(z_1)&& + \gamma_{ii}(z_1)\\&&\\ \nu (g_{11}-g_{22}+\nu f_{12})(z_1)\mathcal{Z}^2 (\nu z_1+\theta) &=& (g_{11}-g_{22}+f_{12})(z_1)\mathcal{Z}^2(z_1)\\ + -a\mathcal{Z}(g_{11}-g_{22})(\nu z_1 + \theta)  && \\+\nu^2(\gamma_{11}-\gamma_{22}+ \delta_{21})(\nu z_1+\theta)\mathcal{Z}(\nu z_1+\theta)&&   +(\gamma_{11}-\gamma_{22}+ \delta_{21})( z_1) \mathcal{Z}( z_1)\\ +\nu^2\gamma_{21}(\nu z_1 +\theta))+a \delta_{21}(\nu \zeta + \theta)&&+\gamma_{21}(z_1) \end{array}\right. \end{equation}

   Using \autoref{lemmaindependancepowers} we get the following restrictions. The third line of \eqref{invariancesecondary2} implies: \begin{equation}\label{involutiondelta21} \delta_{21}(\nu z_1 + \theta)=\frac{1}{\mu\nu}\delta_{21}(z_1) \end{equation} The fourth line implies: $$   (\nu-1)g_{ii}=(-1)^i(1-\nu^2)f_{12}  
   $$  In the same way, the fifth line implies: $$(\nu^2 -1)(g_{11}-g_{22})=(1-\nu^3)f_{12}$$ Since $\nu\neq 1$, we get: \begin{equation} g_{ii}=f_{12}=0\end{equation}
    The fourth line also implies: \begin{equation}\label{involutiongammaii}\gamma_{ii}(\nu z_1+\theta)=\frac{1}{\nu}\gamma_{ii}(z_1)-\frac{a}{\mu\nu}g_{ii}(z_1)=\frac{1}{\nu}\gamma_{ii}(z_1) \end{equation}

   Finally the fifth line of \eqref{invariancesecondary2} implies: \begin{equation}\label{involutiongamma21}\gamma_{21}(\nu z_1 +\theta)= \frac{1}{\nu^2}\gamma_{21}(z_1)-\frac{a}{\mu\nu^2}\delta_{21}(z_1) \end{equation} We distinguish between two cases: \begin{itemize}
       \item If $\mu\neq \nu$, then $\gamma=-\frac{a}{\nu^2-\mu\nu}\delta_{21}(z_1)$ is a solution of \eqref{involutiongamma21}. Hence, in view of \autoref{lemmafunctionsinvolution}, \eqref{involutiongamma21} is equivalent to: \begin{equation}
           \gamma_{21} \in -\frac{a}{\nu^2-\mu\nu}\delta_{21}(z_1) + \mathbb{C}(\wp_0)\wp''_0 
       \end{equation}
       \item If $\mu=\nu$, then either $\delta_{21}=0$ and \autoref{lemmafunctionsinvolution} shows that \eqref{involutiongamma21} is equivalent to $\gamma_{21}\in \mathbb{C}(\wp_0)\wp'_0$, or: $$\frac{\gamma_{21}}{\delta_{21}}(\nu z_1+\theta)=(\nu-\frac{1}{\nu^2})\frac{\gamma_{21}}{\delta_{21}}(z_1) - \frac{a}{\nu}$$ In the second subcase, since $z_1\mapsto \nu z_1+\theta$ has finite order $k$ with $\nu^k =1$, we get $(\nu-\frac{1}{v^2})^{k-1}+\ldots+(\nu-\frac{1}{\nu^2})+1=0$. Since moreover $k=2$ or $3$, we get a contradiction. Hence if $\mu=\nu$ then: \begin{equation} \begin{array}{ccc} \delta_{21}=0 & \text{and} & \gamma_{21}\in \mathbb{C}(\wp_0)\wp'_0 \end{array} \end{equation}
   \end{itemize}
   Moreover, the fifth line and \autoref{lemmaindependancepowers} imply: $$\nu(\gamma_{11}-\gamma_{22}+\frac{1}{\mu}\delta_{21})(z_1)=\frac{1}{\nu}(\gamma_{11}-\gamma_{22}+\frac{1}{\mu}\delta_{21})(z_1)$$  and $\theta\in \Lambda'$ whenever $\delta_{21}\neq 0$ or $\gamma_{11}\neq \gamma_{22}$. Comparing with \eqref{involutiondelta21} and \eqref{involutiongammaii}, the above equality implies: \begin{equation}
   \begin{array}{ccc} (\nu^2 = 1  \text{ and } \mu=1 \text{ and } \theta=0) & \text{or}& \gamma_{11}-\gamma_{22}=\delta_{21}=0 \end{array}
   \end{equation} 
We have proved that $\tilde{\nabla}$ has matrix form as described in the statement.

   Reciprocally, suppose that  $\tilde{\Psi}$  is the lift of the generator of $\Gamma$ as in 1.  Suppose also tat $\tilde{\nabla}$ is a meromorphic affine connection with matrix form as in the statement. Then \eqref{invariancesecondary2} is clearly satisfied, i.e.\  $\tilde{\nabla}$ is $\tilde{\Psi}$-invariant, and $M=\Gamma \backslash \hat{M}$ is a secondary Kodaira surface. This achieves the proof since the matrix form in case b) also appears in case a), and the torsion and curvature both vanish when $\gamma_{11}=\gamma_{22}=0$ in this case.

 \end{enumerate}
 
 \end{proof}

\subsection{Two tori}
In view of \autoref{canonicaltrivial}, to achieve the case $\hat{N}=\mathbb{C}/\Lambda'$ (see \autoref{reductionprincipalelliptic}), it remains to classify meromorphic affine two tori with $a(\hat{M})=1$ and their quotients.

Let $\hat{M}\overset{\hat{\pi}}{\longrightarrow}\mathbb{C}/\Lambda'$ be a two torus with $a(\hat{M})=1$, and $\mathcal{E}_0$ the subfield of $\Lambda'$-elliptic meromorphic functions. Let $\tilde{p}:\tilde{M}\longrightarrow \hat{M}$ be the universal covering, with coordinates $(z_1,z_2)$ as in \autoref{coordinatesKodaira}.  Then the automorphisms $(\varphi_{\lambda'})_{\lambda'\in\Lambda'}$ are translations in these coordinates.

Hence, for any meromorphic affine connection $\tilde{\nabla}$ on $\tilde{M}$,  with matrix: $$dz_1 \otimes (f_{ij})_{i,j=1,2} + dz_2 \otimes (g_{ij})_{i,j=1,2}$$ the condition $\varphi_{\lambda'}^\star \tilde{\nabla}=\tilde{\nabla}$ is equivalent to $f_{ij},g_{ij}\in \mathcal{E}_0$.

We immediately get:

\begin{theorem}\label{classificationtori} Let $\hat{M}$ be a two torus with $a(\hat{M})=1$ and $\tilde{p}:\tilde{M}\longrightarrow \hat{M}$ its universal covering with global coordinates $(z_1,z_2)$ as in \autoref{coordinatesKodaira}. Then the map from the set of meromorphic affine connections on $\hat{M}$ to $\mathcal{E}_0^8$ obtained by mapping $\hat{\nabla}$ to the coefficients $(f_{ij},g_{ij})$ of the matrix of $\tilde{\nabla}=\tilde{p}^\star \hat{\nabla}$ in $(\der{}{z_1},\der{}{z_2})$ is a bijection.
\end{theorem}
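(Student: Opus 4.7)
The plan is to apply \autoref{coordinatesKodaira}, point (4), which says that pullback through $\tilde{p}$ gives a bijection between meromorphic affine connections on $\hat{M}$ and meromorphic affine connections on $\tilde{M}=\mathbb{C}^2$ invariant under the deck-transformation group $G$. By \autoref{torus}, every element of $G$ is a translation of $\mathbb{C}^2$: the generators $\psi_1,\psi_2$ translate only in $z_2$ (by $1$ and by $\tau$, which is a constant since $\hat{M}$ is isotrivial), and each $\varphi_{\lambda'}$ lies above $z_1\mapsto z_1+\lambda'$ and is a translation in both variables (in particular the $\overline{\lambda'}z_1$ twist that appears in \autoref{primary} is absent here). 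The key observation is that for any translation $\psi$, the differential $d\psi$ is the identity through the global frame $(\der{}{z_1},\der{}{z_2})$, so by the diagram in \autoref{pullback} the pullback $\psi^\star\tilde{\nabla}$ has matrix whose coefficients are simply $f_{ij}\circ\psi$ and $g_{ij}\circ\psi$. Hence $G$-invariance of $\tilde{\nabla}$ is equivalent to the elementary scalar condition that each $f_{ij},g_{ij}$ is $G$-invariant as a meromorphic function.

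The first step is to show $f_{ij},g_{ij}$ depend only on $z_1$. By \autoref{invariance} the pole divisor $\tilde{D}$ of $\tilde{\nabla}$ lies on fibers of $z_1$, so for a generic value of $z_1$ each coefficient restricts to a holomorphic function on the $z_2$-fiber $\mathbb{C}$. Invariance under $\psi_1,\psi_2$ forces this restriction to be doubly periodic, hence constant by Liouville. Therefore $f_{ij}$ and $g_{ij}$ descend to meromorphic functions of $z_1$ alone.

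The second step is to translate invariance under the $\varphi_{\lambda'}$: since each $\varphi_{\lambda'}$ acts on the $z_1$-coordinate by the translation $z_1\mapsto z_1+\lambda'$, the scalar invariance condition is precisely $\Lambda'$-periodicity in $z_1$, i.e.\ membership in $\mathcal{E}_0$. This proves injectivity of the map in the statement and shows the image is contained in $\mathcal{E}_0^8$.

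For surjectivity, given any $(f_{ij},g_{ij})\in\mathcal{E}_0^8$, the meromorphic affine connection on $\tilde{M}$ with matrix $dz_1\otimes(f_{ij})+dz_2\otimes(g_{ij})$ is automatically $G$-invariant by the same differential computation, and \autoref{coordinatesKodaira} (4) produces a unique meromorphic affine connection on $\hat{M}$ pulling back to it. There is no real obstacle in this proof: the two-torus case is strictly easier than the primary Kodaira case of \autoref{matricesKodaira}, because the absence of the $\overline{\lambda'}z_1$ twist collapses the linked system \eqref{invarianceKodaira} into the uncoupled condition that each coefficient individually be $\Lambda'$-elliptic.
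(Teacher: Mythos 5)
Your proof is correct and follows essentially the same route as the paper: the deck transformations of $\tilde{p}$ are translations, so pullback acts on the coefficient matrix purely by composition; invariance under $\psi_1,\psi_2$ plus Liouville forces the coefficients to depend on $z_1$ only, and invariance under the $\varphi_{\lambda'}$ then amounts exactly to $\Lambda'$-ellipticity, i.e.\ membership in $\mathcal{E}_0$. The paper states this more tersely in the paragraph preceding the theorem, but the content is identical.
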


\subsection{Quotients of meromorphic affine two tori}

\begin{theorem}\label{quotientstori} Let $(M,\nabla)$ be a minimal meromorphic affine complex compact surface of algebraic dimension one, and suppose that the finite ramified covering $(\hat{M},\hat{\nabla})$ from \autoref{finitecover} is a meromorphic affine two torus (of algebraic dimension one).
Then either $M$ is a two torus, or $(M,\nabla)$ is a meromorphic affine secondary Kodaira surface, and such pairs were classified in \autoref{classificationsecondary}.
\end{theorem}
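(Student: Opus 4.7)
The plan is to invoke \autoref{lemmasecondary} to extract the structure of $\Gamma$ and then to analyse the invariance equations $\tilde\Psi^\star \tilde\nabla = \tilde\nabla$ in close parallel with the proof of \autoref{classificationsecondary}, specialised to the two-torus situation (where the shift field $\mathcal{Z} = \alpha \zeta + \beta z_1$ plays no role).

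Concretely, since $\hat{M}$ is a two torus its canonical bundle is trivial (by \autoref{canonicaltrivial}), so \autoref{lemmasecondary} applies. Either $\hat{M} = M$, in which case $M$ is a two torus and the theorem is proved, or $M = \Gamma\backslash \hat{M}$ with $\Gamma$ cyclic acting freely, spanned by an automorphism whose lift to $\tilde M = \mathbb{C}^2$ is $\tilde\Psi(z_1,z_2) = (\nu z_1 + \theta,\, \mu z_2 + a z_1 + b)$ as in \eqref{PsisecondaryKodaira}, with $\nu$ a $k$-th root of unity ($k \leq 6$), $\mu$ a power of $\nu$, and — by the second item of \autoref{lemmasecondary} — $\mu \neq \nu$. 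Henceforth I assume $\Gamma$ is non-trivial, so that $\nu \neq 1$.

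By \autoref{classificationtori}, the connection $\tilde\nabla = \tilde p^\star q^\star \nabla$ has matrix $dz_1 \otimes (f_{ij}) + dz_2 \otimes (g_{ij})$ with every $f_{ij},g_{ij} \in \mathcal{E}_0$; equivalently, $\tilde\nabla$ is the specialisation of one of the two matrix forms of \autoref{matricesKodaira} to the locus $\alpha = \beta = 0$, i.e.\ $\mathcal{Z} \equiv 0$. I then write the invariance $\tilde\Psi^\star \tilde\nabla = \tilde\nabla$ explicitly; these are exactly the equations \eqref{invariancesecondary2} with $\mathcal{Z} \equiv 0$. The form I subcase (where $g_{12} \neq 0$) is ruled out by repeating verbatim the $\nu^3/\mu = \nu^2/\mu = 1$ argument of the proof of \autoref{classificationsecondary}, which forces $\nu = 1$ and contradicts $\Gamma \neq \{1\}$. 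Hence only form II survives, and applying \autoref{lemmafunctionsinvolution} (with $\wp_0$ the pullback of a coordinate through the quotient $\mathbb{C}/\Lambda' \to \mathbb{P}^1$ by $z_1 \mapsto \nu z_1 + \theta$) together with \autoref{lemmaindependancepowers} as in that proof, I recover matrix forms identical to cases a) and b) of \autoref{classificationsecondary} with $\alpha = \beta = 0$. This places $(M,\nabla)$ inside the classification of \autoref{classificationsecondary}.

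The step I expect to be most delicate is handling the $\mathcal{Z} \equiv 0$ specialisation of the invariance equations, because the key algebraic constraint $(\nu-1)g_{ii} = (-1)^i (1 - \nu^2) f_{12}$ obtained in the primary-Kodaira proof came from separating powers of $\mathcal{Z}$ and therefore degenerates here; I must supply the analogous constraints directly from lines~4 and~5 of the system, using the functional equation $\gamma_{ii}(\nu z_1 + \theta) = \nu^{-1}\gamma_{ii}(z_1) - (a/\mu\nu)g_{ii}(z_1)$ combined with $\mu \neq \nu$ and the requirement that $\tilde\Psi^k$ lies in the fundamental group $\langle \psi_1,\psi_2,\varphi_{\lambda'}\rangle$ of $\hat{M}$ to eliminate the extra degrees of freedom. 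Once this is done, the identification of $(M,\nabla)$ with the pairs classified in \autoref{classificationsecondary} is complete.
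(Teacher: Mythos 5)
There is a genuine gap, and in fact your approach diverges from the paper's at the decisive point. The paper's proof of this theorem contains no connection computations at all: it is a purely surface-theoretic argument. If $M$ is not a two torus, then $q:\hat{M}\to M$ is a non-trivial covering, and by the proof of \autoref{lemmasecondary} (which identifies $q$ with the covering associated to $\mathcal{K}_M$) the canonical bundle $\mathcal{K}_M$ is non-trivial; the invariance of the section $(dz_1\wedge dz_2)^{\otimes k}$ under the lift $\tilde{\Psi}$ gives $\mathcal{K}_M^{\otimes k}=\mathcal{O}_M$, and then Kodaira's Theorem 38 together with the Enriques--Kodaira table (and the non-triviality of $\mathcal{K}_M$) forces $M$ to be a secondary Kodaira surface. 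Your proposal never addresses this surface-type dichotomy, which is the actual content of the statement: matching matrix forms with those of \autoref{classificationsecondary} cannot show that $M$ \emph{is} a secondary Kodaira surface, i.e.\ that it admits a primary Kodaira surface as finite unramified cover (\autoref{secondary}). Worse, \autoref{classificationsecondary} has as hypothesis that the covering $\hat{M}$ from \eqref{finitecover} is a \emph{primary Kodaira} surface, which is precisely not your situation ($\hat{M}$ is a two torus), so "this places $(M,\nabla)$ inside the classification of \autoref{classificationsecondary}" is not licensed by a formal coincidence of matrices; it needs the canonical-bundle argument first.

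There is also a concrete error in your computational reduction. By \autoref{classificationtori} the $\Gamma$-invariant connections on the two torus are parametrized by \emph{all} of $\mathcal{E}_0^8$, and this is strictly larger than the union of the two matrix forms of \autoref{matricesKodaira} specialized at $\mathcal{Z}\equiv 0$: form I at $\mathcal{Z}\equiv 0$ forces $f_{12}=0$ (its $(1,2)$ entry is $\mathcal{Z}g_{12}$), so a torus connection with $g_{12}\neq 0$ and $f_{12}\neq 0$ independent elliptic entries is covered by neither form. Relatedly, the system \eqref{invariancesecondary2} was derived from the form II parametrization, whose $\mathcal{Z}$-terms come from the shear $z_2\mapsto z_2+\overline{\lambda'}z_1+\beta_{\lambda'}$ in the primary Kodaira deck group; on a two torus the deck transformations are pure translations, so the invariance equations must be rederived, not specialized. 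You acknowledge this yourself in the final paragraph --- the constraints $(\nu-1)g_{ii}=(-1)^i(1-\nu^2)f_{12}$ degenerate and "must be supplied" --- which means the core of your argument is deferred rather than carried out. In short: the computational route both rests on a false parametrization and, even if repaired, would classify equivariant connections on quotients of tori without proving the assertion of the theorem; the missing idea is the torsion-canonical-bundle and Enriques--Kodaira classification step that the paper uses.
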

\begin{proof} If $M$ is not a two torus, then the finite covering $q:\hat{M}\longrightarrow M$ and therefore $\mathcal{K}_M$ is not trivial. The proof of point 1. in \autoref{lemmasecondary} only relies on the fact that $\mathcal{K}_{\hat{M}}$ is trivial, which is still satisfied in our situation. As a consequence, we get that the canonical global section $(dz_1\wedge dz_2)^{\otimes k}$ of $\mathcal{K}_{\tilde{M}}^{\otimes k}$, where $k$ is the order of $\nu$ and $\tilde{M}$ is the universal cover of $\hat{M}$, is invariant by the lift $\tilde{\Psi}$ of any automorphism of $q:\hat{M}\longrightarrow M$.  Hence $\mathcal{K}_{M}^{\otimes k}$ is trivial. By Theorem 38 of \cite{KodairaSII}, this implies that a finite unramified cover of $M$ is either a two torus or a primary Kodaira surface. 
In the second case, since $\mathcal{K}_M$ is not trivial, we immediately get that $M$ is a secondary Kodaira surface.
In the first case, we have $kod(M)=0$ and $a(M)=1$. By the Enriques-Kodaira classification (see \cite{BarthPeters}, Table 10 p.189), $M$ is either a secondary Kodaira surface, a two torus, or a K3 surface. Again, since $\mathcal{K}_M$ is not trivial, the only possiblity is a secondary Kodaira surface. 
\end{proof}

\section{Principal elliptic surface over an hyperbolic compact Riemann surface and quotients}\label{section6}

\subsection{Non-existence  on principal elliptic surfaces with $b_1(\hat{M})$ even}
Let $\hat{M}\longrightarrow \hat{N}$ be a principal elliptic surface over a Riemann surface $\hat{N}$ of genus $g\geq 2$, with $a(\hat{M}))=1$. Denote by $p:\mathbb{H}\times \mathbb{C}\longrightarrow \hat{M}$ its universal cover. From \autoref{coordinatesKodaira}, $p$ is the quotient by the automorphisms $\Psi_1,\Psi_2$ corresponding to a lattice associated with the fibers of $\hat{M}$, and by the automorphisms $\varphi_\gamma$ ($\gamma \in \pi_1(\hat{N},y)$) lifting the desk transformations of the universal cover $\overline{p}:\mathbb{H}\longrightarrow \hat{N}$. The later are of the form: $$\varphi_\gamma(z_1,z_2)=(\frac{a_\gamma z_1 + b_\gamma}{c_\gamma z_1 + d_\gamma},z_2 + f_\gamma(z_1))$$ in suitable global coordinates $z_1$ on $\mathbb{H}$ and $z_2$ on $\mathbb{C}$, with $f_\gamma$ a holomorphic function on $\mathbb{H}$. 

\begin{proposition}\label{b1even} Let $\hat{M}\longrightarrow \hat{N}$ be a principal elliptic surface as above. If $\hat{M}$ admits a meromorphic affine connection $\hat{\nabla}$, then $b_1(\hat{M})$ is odd.
    
\end{proposition}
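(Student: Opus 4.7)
The plan is to proceed by contraposition: assume $b_1(\hat{M})$ is even and derive a contradiction with the existence of $\hat\nabla$.

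First I would lift $\hat\nabla$ to a meromorphic affine connection $\tilde\nabla$ on the universal cover $\mathbb{H}\times \mathbb{C}$. By \autoref{invariance}, the pole divisor of $\hat\nabla$ is $\pi^*C$ for some divisor $C$ on $\hat N$, so the poles of $\tilde\nabla$ are supported on vertical sets $\{z_1=\mathrm{const}\}$. Combined with invariance under $\Psi_1,\Psi_2$, a Liouville/elliptic-functions argument analogous to the proof of \autoref{coefficientsinvariant} forces the entries of the matrix of $\tilde\nabla$ in the basis $(\partial_{z_1},\partial_{z_2})$ to depend only on $z_1$, giving $\tilde\nabla = dz_1\otimes F(z_1) + dz_2\otimes G(z_1)$ for $2\times 2$ matrices of meromorphic functions on $\mathbb{H}$.

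Next I would expand the invariance conditions $\varphi_\gamma^\star \tilde\nabla=\tilde\nabla$. The $dz_2$-part yields that $g_{11}+g_{22}$ is $\pi_1(\hat N)$-invariant, that $g_{12}(z_1)\partial_{z_1}$ descends to a meromorphic vector field on $\hat N$ via the weight $-2$ transformation rule $g_{12}(\gamma z_1)=(c_\gamma z_1+d_\gamma)^{-2}g_{12}(z_1)$, together with the key identity
\[
g_{22}(\gamma z_1)-g_{22}(z_1)= f'_\gamma(z_1)\,g_{12}(z_1),
\]
and analogous expressions for the other entries of $G$. The $dz_1$-part further constrains $F$ in relation to $\{f_\gamma\}$ and $G$: in particular $f_{11}$ is forced to be the Christoffel of a meromorphic affine connection on $T\hat N$, $f_{22}(z_1)\,dz_1$ descends as a meromorphic $1$-form, and $f_{21}(z_1)(dz_1)^{\otimes 2}$ as a meromorphic quadratic differential on $\hat N$.

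The $b_1$-even hypothesis means, via the Leray spectral sequence for $\hat M\to \hat N$, that the Chern class $c_1(\hat M/\hat N)\in H^2(\hat N,\Lambda)$ vanishes, so $\{f_\gamma\}$ lifts to a class in $H^1(\hat N,\mathcal{O}_{\hat N})$; by the Hodge-theoretic surjection $H^1(\hat N,\mathbb{C})\twoheadrightarrow H^1(\hat N,\mathcal O_{\hat N})$ we may assume (after absorbing a coboundary into the $z_2$-coordinate) that $f_\gamma=c_\gamma\in \mathbb{C}$ is a constant homomorphism $\pi_1(\hat N)\to\mathbb{C}$, so in particular $f'_\gamma\equiv 0$. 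The main obstacle is then to close the argument at this point: one must show that the remaining system on $F,G$ combined with the cocycle relations of $\{c_\gamma\}$ forces $\{c_\gamma\}\subset \Lambda$. The leverage here should come from two inputs specific to the genus $g\geq 2$ case: $H^0(\hat N,T\hat N)=0$ (constraining the vector field $g_{12}\partial_{z_1}$ and the quadratic differential $f_{21}(dz_1)^{\otimes 2}$), and the hypothesis $a(\hat M)=1$ which makes every meromorphic function on $\hat M$ the pullback of one on $\hat N$. If this reduction succeeds, then the bundle $\hat M\to\hat N$ is the holomorphically trivial product $\hat N\times E$, whose algebraic dimension equals two, contradicting the hypothesis $a(\hat M)=1$.
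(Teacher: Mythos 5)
Your cohomological reduction is fine as far as it goes: for a principal elliptic bundle over a curve, $b_1(\hat M)$ even is equivalent to the vanishing of the characteristic class in $H^2(\hat N,\Lambda)$, and via the surjection $H^1(\hat N,\mathbb{C})\twoheadrightarrow H^1(\hat N,\mathcal{O}_{\hat N})$ you may indeed normalize the automorphy factors to $\varphi_\gamma(z_1,z_2)=(\gamma\cdot z_1,\,z_2+c_\gamma)$ with constants $c_\gamma$. The genuine gap is exactly at the step you flag as ``the main obstacle'': the remaining invariance system does \emph{not} force $\{c_\gamma\}\subset\Lambda$. Once $f_\gamma$ is constant, $f'_\gamma\equiv 0$ and every coupling term between the base and fiber blocks disappears; the system then admits solutions for \emph{arbitrary} constants $c_\gamma$. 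Concretely, take $G=0$, $f_{12}=f_{21}=f_{22}=0$, and let $f_{11}(z_1)$ be the pullback of any meromorphic affine connection on $\hat N$ (these always exist on a compact Riemann surface, with poles). Since $d\varphi_\gamma=\begin{pmatrix}\gamma'(z_1)&0\\0&1\end{pmatrix}$ is diagonal, the only nontrivial invariance equation is $f_{11}(z_1)=\gamma'(z_1)f_{11}(\gamma z_1)+\gamma''(z_1)/\gamma'(z_1)$, which holds by construction of $f_{11}$, for every choice of the $c_\gamma$. Neither of the two inputs you propose can rescue this: the direct-sum connection uses no global vector fields (so $H^0(\hat N,T\hat N)=0$ is irrelevant), and it is a bona fide meromorphic affine connection with polar divisor a union of fibers, compatible with $a(\hat M)=1$. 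So the purely holomorphic invariance equations on the universal cover carry no information whatsoever about the constants $c_\gamma$, and your strategy cannot be closed as designed.

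This is precisely why the paper's proof takes a completely different, non-holomorphic route. It invokes Kodaira (\cite{KodairaCII}, Theorem 4.17): if $b_1(\hat M)$ is even, $\hat M$ is a deformation of the basic member $B=\hat N\times\mathbb{C}/\Lambda$, so there is a fiber-preserving \emph{diffeomorphism} $\varphi:\hat M^{\mathbb{R}}\to B^{\mathbb{R}}$, written locally as $(z_1,z_2)\mapsto (z_1,z_2+f(z_1))$ with $f$ merely $\mathcal{C}^\infty$. The paper then compares the smooth extension $\hat\nabla^{\mathbb{C}}$ of the hypothesized meromorphic connection (whose matrix has no $d\overline{z}$-components in the holomorphic frame, equation \eqref{complexifiedmatrixnabla}) with $\varphi^\star\nabla_0$ for a model connection $\nabla_0$ on $B$; the difference is a $\Gamma$-equivariant tensor, and equating $(0,1)$-components together with the transformation rule of $df$ under $\Gamma$ yields $\partial f/\partial\overline{z}_1=0$, i.e.\ $\varphi$ is holomorphic, contradicting $a(\hat M)=1$ since $a(B)=2$. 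In other words, the contradiction is extracted from the smooth trivialization guaranteed by the $b_1$-even hypothesis, not from the holomorphic functional equations of the deck group; as the direct-sum connection above shows, any correct proof must import analytic input of this deformation-theoretic kind, which your sketch never touches.
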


\begin{proof} By \cite{KodairaCII}, Theorem 4.17 applied to the basic member $$B:=\mathbb{H}\times \mathbb{C}/\Lambda$$ in the same family as $\hat{M}$, we get that either $b_1(\hat{M})$ is odd or $\hat{M}$ is a deformation of $B$.

Suppose that $b_1(\hat{M})$ is even. In particular, there exists a diffeomorphism between the underlying smooth manifolds: \begin{equation}
    \xymatrix{ \hat{M}^\mathbb{R} \ar[r]^{\varphi} \ar[d] & B^\mathbb{R} \ar[d] \\ \hat{N}^\mathbb{R} \ar@{=}[r] & \hat{N}^\mathbb{R} } 
\end{equation} In particular $\varphi$ induces an isomorphism of $\pi_1(\hat{N},y)$-manifolds between the universal covers of $\hat{M}$ and $B$.

Pick $U\subset \hat{N}$ such that there exists a section of $p$ over $\hat{\pi}^{-1}(U)$, and thus a section of the universal cover of $B$ over the corresponding open subset $U\times V$ in $B$. Then, in the induced coordinates $(z_1,z_2)$ on $\hat{\pi}^{-1}(U)$ and $(z_1,u_2)$ on $U\times V$, $\varphi$ has the expression: \begin{equation}\label{varphiB}\varphi(z_1,z_2)=(z_1,z_2+f_U(z_2))\end{equation} for some $\mathcal{C}^\infty$ function $f_U$ on $U$. We fix $U$ and omit the subscript $U$ in the sequel.

Now, on $B$, we can consider a meromorphic affine connection $\nabla_0$ induced by the canonical holomorphic affine connection on $\mathbb{C}/\Lambda$ and any meromorphic affine connection on $\hat{N}$. Then we can consider the smooth connection $\overline{\nabla}_0:=\varphi^\star \nabla_0$ on the sheaf of complexified differentiable vector fields \begin{equation}\label{complexifiedtangent} T^\mathbb{C} (\hat{M}\setminus S):= T\hat{M}|_{\hat{M}\setminus S}\otimes \mathcal{C}^\infty_{\hat{M}\setminus S,\mathbb{C}} \end{equation} In the basis $\der{}{z_1},\der{}{\overline{z}_1},\der{}{z_2},\der{}{\overline{z}_2}$ induced by coordinates as above, the matrix of the pullback $\tilde{\nabla}_0:=p^\star \overline{\nabla}_0$ is: \begin{equation}\label{matrixnabla0} dz_1 \otimes \begin{pmatrix}a(z_1) & 0 \\ df \cdot a(z_1) + \der{}{z_1}df(z_1) & 0 \end{pmatrix} + d\overline{z}_1 \otimes \begin{pmatrix} 0 & 0 \\ \der{}{\overline{z}_1} df(z_1) & 0 \end{pmatrix} \end{equation} where $a$ is a meromorphic function on $\mathbb{H}$, identified with a $C^\infty$ function $a: \mathbb{H}\longrightarrow \mathfrak{g}\mathfrak{l}_2(\mathbb{R})$ and: $$df(z_1)=\frac{1}{2} \begin{pmatrix} \der{f}{z_1}& \der{\overline{f}}{\overline{z}_1} \\ \der{\overline{f}}{z_1} & \der{f}{\overline{z}_1} \end{pmatrix}$$ Suppose that $\hat{\nabla}$ is a meromorphic affine connection on $\hat{M}$, with poles at $D$, and let $S'=S+D$. Denote by $\hat{\nabla}^\mathbb{C}$ (resp. $\tilde{\nabla}^\mathbb{C}$) the unique extension of $\hat{\nabla}$ to a smooth connection on the sheaf \eqref{complexifiedtangent} (resp. its pullback to the universal cover $\mathbb{H}\times \mathbb{C}$). Then the matrix $A$ of $\tilde{\nabla}^\mathbb{C}$ in $\der{}{z_1},\der{}{\overline{z}_1},\der{}{z_2},\der{}{\overline{z}_2}$  satisfies: \begin{equation}\label{complexifiedmatrixnabla} \der{}{\overline{z}_1}\lrcorner A = \der{}{\overline{z}_2}\lrcorner A = 0 \end{equation} In one other hand, the difference $\tilde{\nabla}^\mathbb{C}-\tilde{\nabla}_0=p^\star(\hat{\nabla}^\mathbb{C}-\overline{\nabla}_0)$ is a $\Gamma$-equivariant section of the $\mathcal{C}^\infty_{\hat{M}\setminus S',\mathbb{C}}$-sheaf $T^\mathbb{C} (\hat{M}\setminus S')^* \otimes End(T^\mathbb{C} \hat{M}\setminus S)$. Let's decompose its matrix in the above basis as: \begin{equation}\label{matrixendo}\begin{array}{ccc} dz_1\otimes (f^{(1,0)}_{i,j})_{i,j=1,2} &+ &dz_2 \otimes (g^{(1,0)}_{i,j})_{i,j=1,2} \\ d\overline{z}_1\otimes (f^{(0,1)}_{i,j})_{i,j=1,2} &+ &d\overline{z}_2 \otimes (g^{(0,1)}_{i,j})_{i,j=1,2}) \end{array}\end{equation} where $f^{(p,q)}_{i,j}$ and $g^{(p,q)}_{i,j}$ are matrices with global sections of $\mathcal{C}^\infty_{\hat{M}\setminus S',\mathbb{C}}$ as entries.  Using \eqref{matrixnabla0} and \eqref{complexifiedmatrixnabla}, we get: \begin{equation}\label{equationnabladeform} f^{(0,1)}_{21} = - \der{}{\overline{z}_1} df(z_1) \end{equation} But $f$ is a $\Gamma$-invariant function on $\mathbb{H}$, so that, for any $\gamma \in \Gamma$: $$df(\gamma z_1)= \begin{pmatrix} \frac{1}{(c_\gamma z_1+d_\gamma)^2} & 0 \\ 0 & \frac{1}{\overline{(c_\gamma z_1+d_\gamma)}^2} \end{pmatrix} \cdot df(z_1)$$ In particular: $$\der{}{\overline{z}_1} df(\gamma z_1) =\begin{pmatrix} 0 & 0 \\ 0 & -\frac{2c_\gamma}{(c_\gamma z_1+d_\gamma)^3} \end{pmatrix}\cdot df(z_1) + \begin{pmatrix} \frac{1}{(c_\gamma z_1+d_\gamma)^2} & 0 \\ 0 & \frac{1}{\overline{(c_\gamma z_1+d_\gamma)}^2} \end{pmatrix} \cdot \der{}{\overline{z}_1} df(z_1)$$  Comparing with \eqref{equationnabladeform} and recalling the $\Gamma$-invariance of \eqref{matrixendo}, we get that for any $\gamma \in \Gamma$: $$-\frac{2}{(c_\gamma z_1+d_\gamma)^3} \der{f}{\overline{z}_1} = 0$$ Hence $f$ is a holomorphic function on $\mathbb{H}$, which precisely means that $\varphi$ is a biholomorphism. This contradicts the hypothesis $a(\hat{M})=1$.

Hence $\hat{M}$ does not admit any meromorphic affine connection.  \end{proof}
 
 \subsection{Principal elliptic surfaces with $b_1(\hat{M})$ odd}

 Let $\hat{M}\longrightarrow \hat{N}$ be a principal elliptic surface over a Riemann surface $\hat{N}$ of genus $g\geq 2$, with $a(\hat{M}))=1$ and  odd first Betti number. Denote by $p:\tilde{M}\longrightarrow \hat{M}$ and $\overline{p}:\mathbb{H}\longrightarrow \hat{N}$ the respective universal covers, and $(z_1,z_2)$ global coordinates on $\tilde{M}$ as in \autoref{coordinatesKodaira}. We will also use the notation $\Gamma=\pi_1(\hat{N},y_0)$ for the fundamental group of $\hat{N}$ at a fixed point.
 
 Then, up to finite unramified cover of the elliptic surface $\hat{M}$, for any $\gamma \in \pi_1(\hat{N},y_0)$, the automorphism $\varphi_\gamma$ from \autoref{universalautomorphisms} is of the form: \begin{equation}\label{varphihyperbolic} \varphi_\gamma(z_1,z_2)=(\frac{a\gamma z_1 + b_\gamma}{c_\gamma z_1 +d_\gamma}, z_2 + log_\gamma(c_\gamma z_1 +d_\gamma))\end{equation} for some $\begin{pmatrix} a_\gamma & b_\gamma \\ c_\gamma & d_\gamma \end{pmatrix}\in SL_2(\mathbb{R})$  and some determination $log_\gamma$ of the logarithm on $c_\gamma \mathbb{H}+d_\gamma$ (see \cite{Klingler}). 

 By \cite{Klingler}, there exists a holomorphic affine connection $\hat{\nabla}_0$ on $\hat{M}$. As in the previous sections, taking the matrix of the pullback $p^\star \hat{\nabla}-p^\star \hat{\nabla}_0$ in $(\der{}{z_1},\der{}{z_2})$  gives a bijection between the set of meromorphic affine $\hat{\nabla}$ connections on $\hat{M}$ and the set of solutions $(f_{ij},g_{ij})$ of the following system of functional equations, for any $\gamma \in \pi_1(\hat{N},y_0)$:

\begin{equation}\label{invariancehyperbolic1} \left\{\begin{array}{ccl} g_{12}(z_1) &=& (c_\gamma z_1 + d_\gamma)^2 g_{12}(\gamma z_1)\\ &&\\ g_{ii}(z_1)&=& g_{ii}(\gamma z_1) + (-1)^i c_\gamma (c_\gamma z_1+d_\gamma) g_{12}(\gamma\cdot z_1) \\&&\\ g_{21}(z_1)&=& \frac{1}{(c_\gamma z_1 + d_\gamma)^2}g_{21}(\gamma z_1) + \frac{c_\gamma}{c_\gamma z_1+d_\gamma} (g_{22}-g_{11})(\gamma z_1) - c^2_\gamma g_{12}(\gamma z_1)\end{array} \right. \end{equation} 
and: 

\begin{equation}\label{invariancehyperbolic2} \left\{\begin{array}{ccl}  f_{12}(z_1)&=& f_{12}(\gamma \cdot z_1) + \frac{c_\gamma}{c_\gamma z_1+d\gamma} g_{12}(\gamma z_1) \\ &&\\ f_{ii}(z_1)&=& \frac{1}{(c_\gamma z_1+d_\gamma)^2} f_{ii}(\gamma z_1) + (-1)^i \frac{c_\gamma}{c_\gamma z_1+d_\gamma} f_{12}(\gamma z_1) + \frac{c_\gamma}{c_\gamma z_1+d_\gamma} g_{ii}(\gamma z_1) \\&&\\f_{21}(z_1)&=& \frac{1}{(c_\gamma z_1+d_\gamma)^4} f_{21}(\gamma z_1) + \frac{c_\gamma}{(c_\gamma z_1+d_\gamma)^3}(f_{22}-f_{11})(\gamma z_1) \\ &&- \frac{c_\gamma^2}{(c_\gamma z_1+d_\gamma)^2} f_{12}(\gamma z_1) + \frac{c_\gamma}{c_\gamma z_1+d_\gamma}g_{21}(\gamma z_1)
\end{array} \right. \end{equation}

We will  describe, in terms of certain differential operators on a line bundle, a codimension three subset of this solutions, namely the one satisfying: \begin{equation}\label{codim2sol}g_{12}=g_{21}=f_{12}+g_{11}-g_{22}=0  \end{equation}  We define $\mathcal{A}_{\hat{M}}^+$ as the (codimension three) affine subspace of meromorphic affine connections on $\hat{M}$ satisfying the above condition.

We begin by preliminaries facts on differential operators on line bundles. These facts will be applied on a fixed line bundle on $\hat{N}$ constructed as follow. Consider the $\Gamma$-linearization $(\alpha_\gamma)_{\gamma \in \Gamma}$ on the trivial module $\mathcal{O}_{\mathbb{H}}$ (see \autoref{linearization}) given by: \begin{equation}\label{linearizationL} \begin{array}{cccccc}\alpha_{\gamma}&:& \mathcal{O}_\mathbb{H} & \longrightarrow & \gamma^* \mathcal{O}_\mathbb{H} \\ && f & \mapsto& (c_\gamma z_1 + d_\gamma)f\circ \gamma^{-1} \end{array}\end{equation} and define $\mathcal{L}$ to be the corresponding line bundle on $\hat{N}$ through the equivalence between linearized modules and modules on the quotient: \begin{equation}\label{bundleL} \mathcal{L}= (\overline{p}_* \mathcal{O}_{\mathbb{H}})^{\alpha} \end{equation}

Given a locally fre sheaf $\mathcal{E}$ of $\mathcal{O}_M$-modules on a  manifold, we can consider the sheaf $J^r \mathcal{E}$ (we refer to \cite{Atiyah} and \cite{GualtieriPym} Definition 2.21 for a definition).  These sheaves fit in exact sequences for $r\geq 1$: \begin{equation}\label{jetsequence}\xymatrix{ 0 \ar[r] & (\Omega^1)^{\otimes r}\otimes \mathcal{E} \ar[r] & J^r \mathcal{E} \ar[r]^{\pi^{r-1}} & J^{r-1}\mathcal{E} \ar[r]& 0}
\end{equation} where $\pi^{r-1}$ stands for the truncation map. These maps generalize by compositions to maps: \begin{equation}\label{truncation}\begin{array}{ccccc}\pi_r^s&:& J^r\mathcal{E}&\longrightarrow & J^s \mathcal{E}\end{array}\end{equation} for $s\leq r$. Each $J^r\mathcal{E}$ contains the subsheaf spanned by the equivalence classes of sections of $\mathcal{E}$ with the same $r$-jets, and so there are morphisms of $\underline{\mathbb{C}}_M$-sheaves: \begin{equation}\label{jr} \begin{array}{ccccc} j^r&:& \mathcal{E} & \longrightarrow & J^r\mathcal{E} \end{array}\end{equation}

Then the linearization $\alpha$ induces isomorphisms $j^2(\alpha_\gamma):j^2(\mathcal{O}_\mathbb{H})\longrightarrow \gamma^* j^2(\mathcal{O}_\mathbb{H})$, and the action of $\gamma$ gives a natural linearization by differentials on $\Omega^k_\mathbb{H}$ so there is a natural linearization $J^2(\alpha)$ on $J^2\mathcal{O}_{\mathbb{H}}$. By construction  $J^2\mathcal{L}$ is the sheaf $(\overline{p}_* J^2 \mathcal{O}_{\mathbb{H}})^{J^2\alpha}$ corresponding to the  linearization  (see \autoref{linearization}) $J^2\alpha$ induced by $\alpha$ on $J^2 \mathcal{O}_{\mathbb{H}}$. Consider the natural trivialization $$\Psi: J^2\mathcal{O}_{\mathbb{H}}\overset{\sim}{\longrightarrow} \mathcal{O}_{\mathbb{H}}^{\oplus 3}$$ given by the global basis $(1\otimes dz_1^{\otimes 2},j^1(1\otimes dz_1), j^2(1))$. Then for any $\gamma\in \Gamma$, $J^2\alpha_\gamma$ is the isomorphism given by the commutative diagram: \begin{equation}\label{jetlinearization} \xymatrix{ J^2\mathcal{O}_{\mathbb{H}} \ar[r]^{\sim}_{\Psi} \ar[d]_{J^2\alpha_\gamma} & \mathcal{O}_\mathbb{H}^{\oplus 3} \ar[d] &\zeta \ar@{|->}[d]  \\ \gamma^* J^2 \mathcal{O}_\mathbb{H} \ar[r]_{\sim}^{\gamma^* \Psi} & \gamma^*\mathcal{O}_{\mathbb{H}}^{\oplus 3} &   (\zeta\circ \gamma^{-1}) \, ^tA_2(z_1) } \end{equation} where: $$ A_2(z_1):=\begin{pmatrix}\frac{1}{(c_\gamma z_1+d_\gamma)^5} & -\frac{3c_\gamma}{(c_\gamma z_1+d_\gamma)^4} & \frac{2c_\gamma^2}{(c_\gamma z_1+d_\gamma)^3} \\ 0& \frac{1}{(c_\gamma z_1+d_\gamma)^3} & -\frac{c_\gamma}{(c_\gamma z_1+d_\gamma)^2} \\ 0&0& \frac{1}{c_\gamma z_1+d_\gamma} \end{pmatrix}$$ Similarly, $J^1\mathcal{L}$ can be described by the linearization corresponding to the lower right minor $A_1(z_1)$ of $A_2(z_1)$ as above. 

Finally, the equivalence between linearized sheaves and sheaves on the base gives a bijection between morphisms of locally free modules $\delta: J^1\mathcal{L}(* \overline{C})\longrightarrow J^2\mathcal{L}(* \overline{C})$ (where $\overline{C}$ is the quotient of some $\Gamma$-invariant divisor $C$ on $\mathbb{H}$) and $\Gamma$-equivariant morphisms of locally free modules $\tilde{\delta}:J^1\mathcal{O}_{\mathbb{H}}(* C) \longrightarrow J^2 \mathcal{O}_{\mathbb{H}}(*C)$. The former morphisms $\delta$ are called \textit{meromorphic differential operators of order two} on $\mathcal{L}$. The set $\mathcal{P}_{\mathcal{L}}$ of such objects is a $\mathcal{O}_{\hat{N}}(*C)$-affine space in the sense that it is the sum of an element and a $\mathcal{O}_{\hat{N}}(*C)$-vector space, where $\mathcal{O}_{\hat{N}}(*C)$ stands for the field of meromorphic functions with poles supported at $C$.

\begin{definition}\label{Op+} Let $\mathcal{L}$ and $\mathcal{P}_{\mathcal{L}}$ as above. \begin{enumerate} \item $\mathcal{P}_{\mathcal{L},+}$ is the subspace consisting of the $\delta \in \mathcal{P}_\mathcal{L}$ with the property: \begin{equation}\label{propertyOp+}   \delta(ker\, \pi_{1}^0)\subset ker\, \pi_{2}^{0}(*C) \end{equation} where $\pi_r^s$ is the truncation map \eqref{truncation}. Explicitely, the subspace $\mathcal{P}_{\mathcal{L},+}$   is the subset of the operators $\delta \in \mathcal{P}_{\mathcal{L}}$ with the property that the matrix of $\tilde{\delta}=\overline{p}^* \delta$ in the canonical basis of $J^1\mathcal{O}_\mathbb{H}$ and $J^2 \mathcal{O}_\mathbb{H}$ is of the form: \begin{equation}\label{matrixop} \Delta(z_1)=\begin{pmatrix} b(z_1) & c(z_1) \\ \nu(z_1) & a(z_1) \\  0 & \mu(z_1) \end{pmatrix} \end{equation}
\item $\mathcal{P}_{\mathcal{L},++}$ is the subspace of the elements $\delta$ of $\mathcal{P}_{\mathcal{L},+}$ such that $\mu=\nu$ in \eqref{matrixop}.
\item  We also define the subspace $\mathcal{P}_{\mathcal{L},0}\subset \mathcal{P}_{\mathcal{L},++}$ of the elements $\delta$ such that the induced morphisms $J^2\mathcal{L}\longrightarrow J^2\mathcal{L}/ker\, \pi_2^0(*C)$ and $ker\, \pi_1^0 \longrightarrow (ker\, \pi_{2}^0/ker\, \pi_2^1)(*C)$ are zero. Equivalently $\mu=\nu=0$ in \eqref{matrixop}.  \end{enumerate}
\end{definition}

\begin{proposition}\label{isoAop} Consider the subsapce $\mathcal{A}^+_{\hat{N},C}$ of meromorphic affine connections on $\hat{N}$, with poles at $C$, satisfying \eqref{codim2sol} and $\mathcal{P}_{\mathcal{L},++}$ as in \eqref{Op+}. Identify elements of $\mathcal{A}^+_{\hat{N},C}$ with the matrices $(f_{ij},g_{ij})_{i,j=1,2}$ of their pullbacks to $\tilde{M}$ and elements of $\mathcal{P}_{\mathcal{L},++}$ with the matrices of their pullbacks to $\tilde{M}$ as in \eqref{matrixop}.  Then the map: $$\begin{array}{ccccccc}\Psi&:&\mathcal{A}^+_{\hat{N},C}  
 & \longrightarrow & \mathcal{O}_{\hat{N}}(*C) &\times &\mathcal{P}_{\mathcal{L},++} \\ &&\hat{\nabla} & \mapsto & ( g_{11} &,& \begin{pmatrix} f_{22} & f_{21} \\ g_{22}-g_{11} & -\frac{1}{3}f_{11} \\ 0 &  g_{22}-g_{11} \end{pmatrix} ) \end{array}$$ is an isomorphism of $\mathcal{O}_{\hat{N}}(*C)$-affine spaces.
\end{proposition}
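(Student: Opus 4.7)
The plan is to translate the constraints \eqref{codim2sol} together with the invariance systems \eqref{invariancehyperbolic1}--\eqref{invariancehyperbolic2} into the equivalent datum of a $\Gamma$-invariant meromorphic function $g_{11}$ and a $\Gamma$-equivariant morphism $\tilde\delta : J^1\mathcal{O}_{\mathbb{H}}(*C) \to J^2\mathcal{O}_{\mathbb{H}}(*C)$ descending to an element of $\mathcal{P}_{\mathcal{L},++}$.

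First I substitute $g_{12} = g_{21} = 0$ into \eqref{invariancehyperbolic1}: the first line is automatic, the second collapses to $g_{ii}(z_1) = g_{ii}(\gamma z_1)$ for $i = 1, 2$, and the third becomes the condition that $\nu := g_{22} - g_{11}$ is $\Gamma$-invariant. One concludes that $g_{11}, g_{22}$ and $\nu$ all descend to global sections of $\mathcal{O}_{\hat{N}}(*C)$, so that the first factor of the target is extracted via $g_{11}$.

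Next I substitute $f_{12} = \nu$ into \eqref{invariancehyperbolic2}: the first line becomes the automatic identity $\nu(z_1) = \nu(z_1)$, while the remaining three lines give cocycle identities for $(f_{11}, f_{22}, f_{21})$ whose inhomogeneous terms involve only the $\Gamma$-invariant data $\nu, g_{11}, g_{22}$. Packaging these unknowns into the matrix $\Delta(z_1)$ of the statement, these identities rewrite as the matrix cocycle relation for $\Delta$ relative to the explicit linearization matrices $A_1, A_2$ from \eqref{jetlinearization}; this is precisely the condition that $\Delta$ be the matrix of a $\Gamma$-equivariant morphism $\tilde\delta : J^1\mathcal{O}_{\mathbb{H}}(*C) \to J^2\mathcal{O}_{\mathbb{H}}(*C)$. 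Since $\Delta$ has the shape \eqref{matrixop} with $\mu = \nu$, its descent $\delta$ lies in $\mathcal{P}_{\mathcal{L},++}$.

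The map $\Psi$ is $\mathcal{O}_{\hat{N}}(*C)$-affine linear by construction, and the procedure is reversible: any pair $(g_{11}, \delta)$ reconstructs all eight entries $(f_{ij}, g_{ij})$ via the entries of $\overline{p}^*\delta$ and the value of $g_{11}$ (setting $g_{12} = g_{21} = 0$ and $f_{12} = \nu$), yielding a unique $\hat\nabla \in \mathcal{A}^+_{\hat{N}, C}$ with $\Psi(\hat\nabla) = (g_{11}, \delta)$. The main obstacle is the explicit bookkeeping in the previous step: the Möbius weights $(c_\gamma z_1 + d_\gamma)^{-k}$ for $k = 1, \ldots, 4$ and the shear terms $c_\gamma/(c_\gamma z_1 + d_\gamma)^{k-1}$ appearing in \eqref{invariancehyperbolic2} must match term by term with the coefficients obtained from $A_1$ and $A_2$. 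In particular the normalization $-\tfrac{1}{3}$ in the $(2,2)$ slot of $\Delta$ is forced by the entry $-3c_\gamma/(c_\gamma z_1 + d_\gamma)^4$ in the top row of $A_2$, and the coincidence $\mu = \nu$ (rather than two independent parameters) is what distinguishes $\mathcal{P}_{\mathcal{L},++}$ from the larger $\mathcal{P}_{\mathcal{L},+}$.
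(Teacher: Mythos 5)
Your architecture is the same as the paper's: the paper also identifies $\mathcal{P}_{\mathcal{L},++}$, via the equivalence between $\Gamma$-equivariant sheaves on $\mathbb{H}$ and sheaves on $\hat{N}$, with matrices $\Delta$ of the form \eqref{matrixop} satisfying the cocycle relation $A_2(z_1)\Delta(\gamma z_1)A_1^{-1}(z_1)=\Delta(z_1)$, and then asserts that $(a,b,c,\nu)$ satisfy the same functional equations as $(-\tfrac{1}{3}f_{11},f_{22},f_{21},g_{22}-g_{11})$ under \eqref{codim2sol}, with $g_{11}$ splitting off as a free element of $\mathcal{O}_{\hat{N}}(*C)$. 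So there is no methodological divergence to discuss; the question is whether your execution of the equation-matching holds up.

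It does not, at the one point where you actually process an equation. After substituting $g_{12}=g_{21}=0$, the third line of \eqref{invariancehyperbolic1} does \emph{not} become ``$\nu:=g_{22}-g_{11}$ is $\Gamma$-invariant''; it becomes
\begin{equation*}
0=\frac{c_\gamma}{c_\gamma z_1+d_\gamma}\,(g_{22}-g_{11})(\gamma\cdot z_1)\qquad (\gamma\in\Gamma'),
\end{equation*}
and since the holonomy group of a compact curve of genus $g\geq 2$ contains elements with $c_\gamma\neq 0$ (an all-upper-triangular subgroup of $SL_2(\mathbb{R})$ is solvable, hence cannot be a cocompact surface group), this forces $\nu\equiv 0$. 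The invariance of $\nu$ you wanted already follows from the second line (which gives $g_{11}$ and $g_{22}$ separately invariant), so attributing it to the third line indicates the third line was never actually inspected. Taken at face value, $\nu\equiv 0$ collapses the image of $\Psi$ into $\mathcal{O}_{\hat{N}}(*C)\times\mathcal{P}_{\mathcal{L},0}$, contradicting surjectivity onto $\mathcal{P}_{\mathcal{L},++}$: for instance the operators with $\nu=1$ produced from projective structures in \autoref{op2} would have no preimage. This tension traces to sign-convention errors in the printed system (compare the $(-1)^i$ in the second line of \eqref{invariancehyperbolic1} with a direct computation of $J^{-1}(\varphi_\gamma^*G)J$), and reconciling them is precisely the content of the ``computation'' the paper's proof appeals to. Your proposal defers that verification to ``explicit bookkeeping,'' but the bookkeeping genuinely fails against the printed formulas: for example, the $b$-slot of the cocycle relation yields $b(z_1)=\frac{b(\gamma z_1)}{(c_\gamma z_1+d_\gamma)^2}-\frac{3c_\gamma}{c_\gamma z_1+d_\gamma}\nu(\gamma z_1)$, whereas substituting $f_{12}=\nu$ into the $f_{22}$-equation of \eqref{invariancehyperbolic2} produces the inhomogeneous term $\frac{c_\gamma}{c_\gamma z_1+d_\gamma}(2g_{22}-g_{11})(\gamma z_1)$, and these agree only after the conventions are repaired. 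Since the entire mathematical content of this proposition is that term-by-term match, a proof that both skips it and misreads the single line it does check has a genuine gap, not a cosmetic one.
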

\begin{proof} By the equivalence of categories between equivariant sheaves and sheaves on the base, there is a bijection between the elements of $\mathcal{P}_{\mathcal{L},++}$ and the matrices $\Delta$ of the form \eqref{matrixop} satisfying $$A_2(z_1)\Delta(\gamma z_1) A_1^{-1}(z_1) = \Delta(z_1)$$ A computation shows that in this case $a,b,c,\nu$ satisfy the same functional equations as the ones satisfied by $-\frac{1}{3}f_{11},f_{22},f_{21}, g_{22}-g_{11}$ where $f_{ij},g_{ij}$ are any solutions of  \eqref{invariancehyperbolic1} and \eqref{invariancehyperbolic2}. Moreover, the subset of solutions $(f_{ij},g_{ij})_{i,j=1,2}$ of the later system satisfying \eqref{codim2sol} is in bijection with pairs consisting of any meromorphic function $g_{11}$, and functions $-\frac{1}{3}f_{11},f_{22},f_{21}, g_{22}-g_{11}$ as before. 
\end{proof}

\begin{lemma}\label{op2} Let $\mathcal{P}_{\mathcal{L},++}$ be the $\mathcal{O}_{\hat{N}}(*C)$-vector space  as in \autoref{Op+}. It contains an element $\delta_1 \in \mathcal{P}_{\mathcal{L},++}\setminus \mathcal{P}_{\mathcal{L},0}$. In particular, it is a direct sum: \begin{equation}\label{directsumop}\Phi : \mathcal{P}_{\mathcal{L},++} \overset{\sim}{\longrightarrow} \mathcal{P}_{\mathcal{L},0} \oplus \mathcal{O}_{\hat{N}}(*C)\delta_1\end{equation}  where the isomorphism $\Phi$ is the projection on $\mathcal{P}_{\mathcal{L},0}=ker(\nu)$ parallel to $\delta_1$.
\end{lemma}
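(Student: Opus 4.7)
The plan is to construct one explicit non-trivial element $\delta_{1}\in\mathcal{P}_{\mathcal{L},++}$ whose $\nu$-entry is the constant function $1\in\mathcal{O}_{\hat{N}}(*C)$; the direct sum decomposition \eqref{directsumop} then follows formally by subtracting $\nu_{\delta}\,\delta_{1}$ from an arbitrary $\delta\in\mathcal{P}_{\mathcal{L},++}$. Thus the entire content of the lemma is the existence of $\delta_{1}$, and I would attack this via a direct cocycle calculation on the universal cover $\mathbb{H}$.

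First I would translate the question to $\mathbb{H}$. Using the equivalence between $\Gamma$-linearised sheaves and sheaves on $\hat{N}$, any $\delta\in\mathcal{P}_{\mathcal{L},++}$ is the same datum as a pullback matrix $\tilde{\delta}(z_{1})$ of the shape \eqref{matrixop} with $\mu=\nu$, subject to the cocycle relation $A_{2}(z_{1})\tilde{\delta}(\gamma z_{1})A_{1}(z_{1})^{-1}=\tilde{\delta}(z_{1})$ for every $\gamma\in\Gamma$. Inspecting the $(3,2)$ and $(2,1)$ entries forces $\nu$ to be $\Gamma$-invariant, so it descends to a meromorphic function on $\hat{N}$; the $(2,2)$-entry says that $a\,dz_{1}$ descends to a meromorphic $1$-form on $\hat{N}$; and the $(1,1)$- and $(1,2)$-entries become inhomogeneous cocycle equations for $b$ and $c$ whose source terms depend linearly on $\nu$ and $a$.

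To build $\delta_{1}$ I would set $\nu=\mu=1$ and $a=0$. The $(1,1)$-equation then reads
\[
b(\gamma z_{1})=(c_{\gamma}z_{1}+d_{\gamma})^{2}b(z_{1})+3c_{\gamma}(c_{\gamma}z_{1}+d_{\gamma}),
\]
which is exactly $\frac{3}{2}$ times the transformation rule for the Christoffel symbol of a meromorphic affine connection on $T\hat{N}$; such connections exist on any compact Riemann surface, so one takes $b=\frac{3}{2}\Gamma$ for such a Christoffel symbol $\Gamma$. The $(1,2)$-equation then reduces to an inhomogeneous equation of the form $c(\gamma z_{1})=(c_{\gamma}z_{1}+d_{\gamma})^{4}c(z_{1})+R(z_{1},\gamma)$, where $R$ is an explicit meromorphic function built from $b$ and the entries of $A_{1},A_{2}$. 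The homogeneous equation $c(\gamma z_{1})=(c_{\gamma}z_{1}+d_{\gamma})^{4}c(z_{1})$ describes meromorphic quadratic differentials on $\hat{N}$, and the solvability of the inhomogeneous equation with poles supported in $\overline{p}^{-1}(C)$ follows from the vanishing of $H^{1}(\hat{N},(\Omega_{\hat{N}}^{1})^{\otimes 2}(*C))$, which is immediate from Serre duality and $\deg(\Omega_{\hat{N}}^{1})^{\otimes 2}=4g-4>0$.

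Having produced $\delta_{1}$, the direct sum is a formal verification: for $\delta\in\mathcal{P}_{\mathcal{L},++}$ with $\nu$-entry $\nu_{\delta}$, set $\Phi(\delta)=\delta-\nu_{\delta}\,\delta_{1}$; the $\nu$- and $\mu$-entries of $\Phi(\delta)$ both vanish by $\mathcal{O}_{\hat{N}}(*C)$-linearity, whence $\Phi(\delta)\in\mathcal{P}_{\mathcal{L},0}$, and the inverse map is $(\delta_{0},f)\mapsto\delta_{0}+f\delta_{1}$. The main obstacle is the last step of the construction of $\delta_{1}$, namely the meromorphic solvability of the $c$-equation with poles only at $C$: if $C$ is not sufficiently ample to ensure $H^{1}$-vanishing, one must either enlarge $C$ by an effective divisor (harmless since the direct-sum claim is made over the same enlarged $\mathcal{O}_{\hat{N}}(*C)$) or appeal to Klingler's construction in \cite{Klingler} of a holomorphic affine connection on $\hat{M}$, which via \autoref{isoAop} provides the required $c$ canonically from a single $(2,1)$-coefficient.
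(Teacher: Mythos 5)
Your main construction is correct, and it takes a genuinely different route from the paper. The paper never solves a functional equation: it observes that the slice $\{\nu=1\}\subset\mathcal{P}_{\mathcal{L},++}$ consists exactly of the splittings of the meromorphic one-jet sequence of $J^1\mathcal{L}$, i.e.\ of meromorphic connections on $J^1\mathcal{L}$ with poles at $C$, that among these sit the meromorphic $SL_2$-opers, and that these opers are in bijection with the \emph{nonempty} set of meromorphic projective structures on $\hat{N}$ with poles at $C$ (\cite{BDG}, Theorem 4.7); $\delta_1$ is then any such oper. You instead work directly with the cocycle relation $A_2(z_1)\Delta(\gamma z_1)A_1(z_1)^{-1}=\Delta(z_1)$, and your reductions check out against \eqref{jetlinearization}: the $(2,1)$ and $(3,2)$ entries give $\Gamma$-invariance of $\nu$ and $\mu$, the $(2,2)$ entry gives the weight-two law for $a$ once $\mu=\nu$, and with $\nu=\mu=1$, $a=0$ the $(1,1)$ entry is precisely $b(\gamma z_1)=(c_\gamma z_1+d_\gamma)^2b(z_1)+3c_\gamma(c_\gamma z_1+d_\gamma)$, solved by $\tfrac{3}{2}$ times the Christoffel symbol of a meromorphic affine connection; the $(1,2)$ entry then becomes an inhomogeneous weight-four equation for $c$, whose obstruction lies in $H^1(\hat{N},\mathcal{K}_{\hat{N}}^{\otimes 2}(*C))$ (you omit, but should include, the routine check that the source term is a $1$-cocycle and that group cohomology of $\Gamma$ computes sheaf cohomology on $\hat{N}$ here). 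The formal direct-sum step is identical in both proofs: your $\Phi(\delta)=\delta-\nu_\delta\,\delta_1$ is the paper's projection parallel to $\delta_1$, since $\nu_{\delta_1}=1$. What each buys: the paper's argument is cohomology-free and uniform in $C$, at the price of invoking the oper/projective-structure dictionary; yours is explicit and self-contained.

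Two concrete defects remain, and both bite exactly in the boundary case $C=0$. First, you locate the pole-support difficulty at the wrong step. For $g(\hat{N})\geq 2$ the $c$-equation is \emph{unconditionally} solvable: Serre duality gives $H^1(\hat{N},\mathcal{K}_{\hat{N}}^{\otimes 2}(nC))\cong H^0(\hat{N},\mathcal{K}_{\hat{N}}^{-1}(-nC))^*=0$ for every $n\geq 0$, so no ampleness of $C$ is needed there. The genuinely $C$-dependent step is the $b$-step: a meromorphic affine connection on $\hat{N}$ with poles supported in $C$ exists iff the Atiyah class of $T\hat{N}$ dies in $H^1(\hat{N},\mathcal{K}_{\hat{N}}(*C))$, which holds for every nonzero effective $C$ but fails for $C=0$, since $\deg \mathcal{K}_{\hat{N}}=2g-2\neq 0$ obstructs holomorphic affine connections. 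Second, neither of your fallbacks repairs this. Enlarging $C$ changes $\mathcal{P}_{\mathcal{L},++}$ and the base field $\mathcal{O}_{\hat{N}}(*C)$ simultaneously, so it proves the lemma for a different divisor, not the given one. And Klingler's holomorphic affine connection cannot supply $\delta_1$, nor ``the required $c$'': for a holomorphic connection, the invariance equations \eqref{invariancehyperbolic1} force $g_{12}=0$ (weight-law of a holomorphic section of a negative-degree bundle), then $g_{11},g_{22}$ are invariant holomorphic hence constant, and if $g_{22}\neq g_{11}$ the $g_{21}$-line would exhibit a \emph{holomorphic} solution of the affine-connection cocycle, which does not exist for $g\geq 2$; hence $g_{22}=g_{11}$, so under \autoref{isoAop} any holomorphic connection has $\nu$-component zero and lands in $\mathcal{P}_{\mathcal{L},0}$. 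Moreover its $f_{21}$ solves an inhomogeneous equation whose source is built from its own coefficients with $\nu$-data zero, not your $c$-equation, whose source is tied to the choice $b=\tfrac{3}{2}\Gamma$ and $\nu=1$. So in the case $C=0$ your proof has a real gap precisely where the paper's argument still works, because holomorphic projective structures, unlike holomorphic affine connections, exist on every compact Riemann surface.
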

\begin{proof} $\mathcal{P}_{\mathcal{L},++}\setminus \mathcal{P}_{\mathcal{L},0}$ contains the hyperplane $\mathcal{P}_{\mathcal{L},1}=\{\nu=1\}$, which is the subset of elements $\delta$ satisfying: $$\pi_1^0 \circ \delta \circ j^1= Id_{\mathcal{L}}$$ These are exactly the splitting of the meromorphic one jet sequence of $J^1\mathcal{L}$, i.e. meromorphic connections on $J^1\mathcal{L}$ with poles at $C$. This in particular includes the meromorphic $SL_2$-opers on $J^1 \mathcal{L}$, namely meromorphic connections $\overline{\nabla}$ inducing the canonical connection  of $det(J^1\mathcal{L})=\mathcal{O}_{\hat{N}}$, and inducing an isomorphism between $ker\, \pi_1^0(*C)$ and $J^1\mathcal{L}/ ker\, \pi_1^0 (*C)$. This subset is in turn known to be in bijection with the nonempty set of meromorphic projective structures on $\hat{N}$ with poles at $C$ (see \cite{BDG}, Theorem 4.7). We thus define $\delta_1$ as any operator corresponding to such an element.
\end{proof}

We obtain:

\begin{corollary}\label{classificationgenus2} Let $\hat{M}$ be a principal elliptic bundle with odd first Betti number over a complex compact curve $\hat{N}$ with genus $g(\hat{N})\geq 2$. Let $C$ be an effective divisor of $\hat{N}$ such that $\mathcal{O}$ the $\mathcal{O}_{\hat{N}}(*C)$. Then there exists a $\mathcal{O}_{\hat{N}}(*C)$-affine subspace of codimension 3 in the space of meromorphic affine connection on $\hat{M}$ with poles at $C$, which is isomorphic to the $\mathcal{O}_{\hat{N}}(*C)$-affine space: $$\mathcal{O}_{\hat{N}}(*C)^2 \times \mathcal{P}_{\hat{N},C}$$ where $\mathcal{P}_{\hat{N},C}$ is the affine space of meromorphic projective structures on $\hat{N}$ with poles at $C$. 
\end{corollary}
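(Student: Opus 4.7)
The plan is to chain together the two preceding results, \autoref{isoAop} and \autoref{op2}, and read off the meromorphic projective structures through the $SL_2$-oper dictionary of \cite{BDG}. The codimension three assertion is immediate from the definition of $\mathcal{A}^+_{\hat{M}}$: among the eight meromorphic coefficients $(f_{ij},g_{ij})_{i,j=1,2}$ describing the pullback $\tilde{p}^\star \hat{\nabla}$ in the frame $(\der{}{z_1},\der{}{z_2})$, the three $\Gamma$-equivariant linear relations \eqref{codim2sol} cut out $\mathcal{A}^+_{\hat{M}}$ inside the full $\mathcal{O}_{\hat{N}}(*C)$-affine space of meromorphic affine connections on $\hat{M}$ with poles at $C$.

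First I would apply \autoref{isoAop} to obtain an isomorphism of $\mathcal{O}_{\hat{N}}(*C)$-affine spaces $\mathcal{A}^+_{\hat{M}} \xrightarrow{\sim} \mathcal{O}_{\hat{N}}(*C) \times \mathcal{P}_{\mathcal{L},++}$, where the first factor records the coefficient $g_{11}$ and the second packages the remaining four data $(f_{22},f_{21},g_{22}-g_{11},-\tfrac{1}{3}f_{11})$ as a meromorphic differential operator of order two on the line bundle $\mathcal{L}$ of \eqref{bundleL}. Then I would apply \autoref{op2} to split $\mathcal{P}_{\mathcal{L},++} \xrightarrow{\sim} \mathcal{P}_{\mathcal{L},0} \oplus \mathcal{O}_{\hat{N}}(*C)\delta_1$, choosing $\delta_1$ (as in the proof of that lemma) to be a meromorphic $SL_2$-oper on $J^1\mathcal{L}$; the rank-one summand $\mathcal{O}_{\hat{N}}(*C)\delta_1$ then supplies the second copy of $\mathcal{O}_{\hat{N}}(*C)$ in the target of the desired isomorphism.

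Finally, I would identify $\mathcal{P}_{\mathcal{L},0}$ with $\mathcal{P}_{\hat{N},C}$ through the oper-to-projective-structure bijection from \cite{BDG}, Theorem 4.7, already used inside the proof of \autoref{op2}: once $\delta_1$ has been fixed as an oper, translation by elements of $\mathcal{P}_{\mathcal{L},0}$ sweeps out, in an $\mathcal{O}_{\hat{N}}(*C)$-affine way, the whole space $\mathcal{P}_{\hat{N},C}$ of meromorphic projective structures on $\hat{N}$ with poles at $C$. The point requiring the most care is to check that these three identifications assemble compatibly into a single $\mathcal{O}_{\hat{N}}(*C)$-affine isomorphism and that the final coordinate on $\mathcal{P}_{\hat{N},C}$ is independent of the choice of $\delta_1$ up to an overall translation; both facts reduce to the $\mathcal{O}_{\hat{N}}(*C)$-linearity already recorded in \autoref{isoAop} and \autoref{op2} together with the observation that any two meromorphic $SL_2$-opers differ by a meromorphic quadratic differential.
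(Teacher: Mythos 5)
Your proposal coincides with the paper's own proof: \autoref{classificationgenus2} is established there precisely by chaining \autoref{isoAop} with the splitting of \autoref{op2} and invoking the bijection between meromorphic $SL_2$-opers and meromorphic projective structures (\cite{BDG}, Theorem 4.7) already used to produce $\delta_1$, so that $\mathcal{P}_{\mathcal{L},0}$ is identified with the $\mathcal{O}_{\hat{N}}(*C)$-vector space directing $\mathcal{P}_{\hat{N},C}$. Your extra remarks on the codimension count coming from \eqref{codim2sol} and on the independence of the choice of $\delta_1$ (two opers differing by a meromorphic quadratic differential) only make explicit what the paper leaves implicit, and do not constitute a different route.
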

\begin{proof} The assertion follows from the successive application of \autoref{isoAop} and \autoref{op2}, and the fact that $\mathcal{P}_{\mathcal{L},0}$ is isomorphic to the $\mathcal{O}_{\hat{N}}(*C)$-vector space directing the space of meromorphic projective structures on $\hat{N}$ with poles at $C$ as pointed out in the proof of \autoref{op2}.
\end{proof}

\subsection{Quotients  of principal elliptic bundles over higher genus curves}

We now  classify the minimal meromorphical affine surfaces with $a(M)=1$ such that the associated finite cover $\hat{M}$ is a principal elliptic bundle with odd first Betti number over a compact curve with genus $g(\hat{N})\geq 2$.

For, we first recall the geometric description of $\hat{M}$ given in \cite{Klingler}. Let $\mathbb{P}^1$ seen as the homogeneous complex manifold $G/P$, where $G=SL_2(\mathbb{C})$ and $P$ the subgroup stabilizing the line $\mathbb{C}\mathfrak{e}_1\subset \mathbb{C}^2$ through the standard representation ($(\mathfrak{e}_1,\mathfrak{e}_2$ is the canonical basis of $\mathbb{C}^2$). Let $\Gamma'\subset SL_2(\mathbb{R})$ be the image of the holonomy representation of a uniform $(G,G/P)$ structure on $\hat{N}$, that is : $$\overline{p}:\mathbb{H}\longrightarrow\hat{N}=\Gamma'\backslash \mathbb{H}$$ is the universal cover. 

Let us introduce a notation. If $p:E\longrightarrow M$ is a holomorphic $P$-principal bundle and $\rho:P\longrightarrow GL(\mathbb{V})$ a $P$-representation, we let: \begin{equation}\label{repmodule} E(\mathbb{V})=(p_* \mathcal{O}_E\otimes \mathbb{V})^P \end{equation} where the action of $P$ on $p_* \mathcal{O}_E\otimes \mathbb{V}$ is given by $$b\cdot p_* (f\otimes A) = p_* (f\circ b^{-1} \otimes \rho(b)(A))$$  Then we have a natural isomorphism (see for example \cite{Snow}): \begin{equation}\label{OP(1)} \mathcal{O}_{\mathbb{P}^1}(1) \simeq G(\mathbb{C}\mathfrak{e}_1) \end{equation} where $G$ is seen as the total space of the holomorphic $P$-principal bundle $p_{G/P}:G\longrightarrow G/P$. In the rest of the paper we will identify these two modules. In particular there is a natural left $G$-linearization (see \autoref{linearization}) of this module defined for any $g\in G$, by:  \begin{equation}\label{linearizationOP(1)} \begin{array}{ccccc}\phi_g &:& G(\mathbb{C}\mathfrak{e}_1) & \longrightarrow & g^* G(\mathbb{C}\mathfrak{e}_1) \\ && p_{G/P *}\, \tilde{s} & \mapsto& p_{G/P *}\, \tilde{s}\circ g^{-1}\end{array} \end{equation} Now we can restrict this line bundle to $\mathbb{H}\subset G/P$, and we get a $\Gamma'$-linearisation by considering the isomorphisms $(\phi_\gamma)_{\gamma \in \Gamma'}$ as above. Then: \begin{lemma}\label{linebundleproj} The line bundle $\mathcal{L}$ defined as in \autoref{bundleL} is naturally isomorphic to $\mathcal{L}_1=(\overline{p}_* G(\mathbb{C}\mathfrak{e}_1)|_{\mathbb{H}})^{(\phi_\gamma)_{\gamma\in \Gamma'}}$. \end{lemma}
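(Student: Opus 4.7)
The plan is to invoke the equivalence of categories between $\Gamma'$-linearized $\mathcal{O}_{\mathbb{H}}$-modules and $\mathcal{O}_{\hat{N}}$-modules (valid because $\Gamma'$ acts freely on $\mathbb{H}$, as recalled following \autoref{linearization} via \cite{Lunts}). This reduces the claim $\mathcal{L}\simeq \mathcal{L}_1$ to producing an isomorphism of $\Gamma'$-linearized line bundles on $\mathbb{H}$ between $(\mathcal{O}_{\mathbb{H}}, \alpha)$ defining $\mathcal{L}$ and the restriction $(G(\mathbb{C}\mathfrak{e}_1)|_{\mathbb{H}}, \phi|_{\Gamma'})$ defining $\mathcal{L}_1$.

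I would begin by fixing a holomorphic section $s_1 \colon \mathbb{C} \to G$ of the $P$-principal bundle $p_{G/P}\colon G \to G/P = \mathbb{P}^1$ over the open Bruhat cell $\mathbb{C} \supset \mathbb{H}$, for instance $s_1(z_1) = \begin{pmatrix} z_1 & -1 \\ 1 & 0 \end{pmatrix}$, so that $s_1(z_1)\mathfrak{e}_1 = (z_1, 1)^{\top}$. The $P$-equivariance $\tilde{s}(gb) = \alpha(b)\tilde{s}(g)$ characterizing sections of $G(\mathbb{C}\mathfrak{e}_1)$ (where $\alpha(b)$ denotes the $(1,1)$-entry of $b \in P$) then provides a canonical trivialization $G(\mathbb{C}\mathfrak{e}_1)|_{\mathbb{C}} \simeq \mathcal{O}_{\mathbb{C}}$ via $\tilde{s} \mapsto \tilde{s}\circ s_1$, which I restrict to $\mathbb{H}$. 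To compute the transported linearization, for $\gamma \in \Gamma' \subset SL_2(\mathbb{R})$ a direct matrix calculation produces the factorization $\gamma^{-1}\cdot s_1(w) = s_1(\gamma^{-1}\cdot w)\cdot b(\gamma, w)$ with $b(\gamma, w)\in P$, and identifies $\alpha(b(\gamma, w))$ as a degree-one polynomial in $w$ whose coefficients are entries of $\gamma^{-1}$. Combining this with the equivariance condition, the transported linearization of $\mathcal{O}_{\mathbb{H}}$ becomes $f \mapsto \alpha(b(\gamma, w))\cdot (f\circ \gamma^{-1})$, which one then matches with the formula for $\alpha_\gamma$ in \eqref{linearizationL}---interpreting the factor $(c_\gamma z_1 + d_\gamma)$ in the unique way making the assignment into a genuine linearization (i.e., with $z_1$ evaluated at the source of $\gamma^{-1}$, as the cocycle condition forces). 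Invoking the equivalence of categories then yields the desired isomorphism $\mathcal{L}\simeq \mathcal{L}_1$.

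The main obstacle is essentially bookkeeping rather than substance: one must coordinate the various conventions (left versus right $G$-action on $G/P$, the meaning of $\gamma^*$ as a pullback of sheaves, and the precise interpretation of the formulaic factor $(c_\gamma z_1 + d_\gamma)$) so that the two linearizations genuinely agree. Once these are lined up, the underlying calculation is a standard Borel--Weil-style determination of the automorphy factor of the homogeneous line bundle $\mathcal{O}_{\mathbb{P}^1}(1) \simeq G(\mathbb{C}\mathfrak{e}_1)$ restricted to the upper half-plane.
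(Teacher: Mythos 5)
Your proposal is correct and follows essentially the same route as the paper: both arguments fix a holomorphic section of the $P$-bundle $G\to G/P$ over (a neighborhood of) $\mathbb{H}$, use it to trivialize $G(\mathbb{C}\mathfrak{e}_1)|_{\mathbb{H}}$, compute the $P$-valued factorization of $\gamma\cdot\sigma(z)$ to extract the automorphy factor $(c_\gamma z_1+d_\gamma)$, match it against $\alpha_\gamma$ from \eqref{linearizationL}, and conclude via the equivalence between $\Gamma'$-linearized sheaves and sheaves on the quotient. Your choice of section ($s_1$ over the Bruhat cell, working with $\gamma^{-1}$) differs only cosmetically from the paper's $\sigma_0$, and your attention to the cocycle-condition bookkeeping is sound.
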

\begin{proof}  It is sufficient to find a trivialization of $\mathcal{L}_1$ such that the isomorphisms $\phi_\gamma$ identifies with the isomorphisms $\alpha_\gamma$ as in \eqref{linearizationL}. For, recall that there exists a global holomorphic section $\sigma_0: \mathbb{H}\longrightarrow G$ given by: $$\sigma_0(z)=\begin{pmatrix} 1 & 0 \\ z &0 \end{pmatrix}$$ Such a section defines a trivialization of any module obtained as a representation of $G$, in particular: \begin{equation}\label{trivOP1} \begin{array}{ccccc}\psi_{\sigma_0}&:& G(\mathbb{C}\mathfrak{e}_1)|_{\mathbb{H}} & \longrightarrow & \mathcal{O}_{\mathbb{H}} \\ && [(\sigma_0,f\mathfrak{e}_1)] &\mapsto & f \end{array}\end{equation}

Now we have: $$\sigma_0(\gamma z) = \gamma \cdot \sigma_0(z) \cdot \begin{pmatrix} c_\gamma z + d_\gamma & 0 \\ 0 & \frac{1}{c_\gamma z + d_\gamma }\end{pmatrix}$$ This implies the following commutative diagram for any $\gamma\in \Gamma'$: $$\xymatrix{G(\mathbb{C}\mathfrak{e}_1)|_{\mathbb{H}} \ar[d]_{\psi_{\sigma_0}} \ar[r]^{\phi_\gamma} & \gamma^* G(\mathbb{C}\mathfrak{e}_1)|_{\mathbb{H}} \ar[d]^{\gamma^* \psi_{\sigma_0}} \\ \mathcal{O}_\mathbb{H} \ar[r]_{\alpha_\gamma} & \gamma^* \mathcal{O}_\mathbb{H} } $$
\end{proof}
We denote by $R(\mathcal{L})$ the $\mathbb{C}^*$-principal bundle whose fiber over $\hat{y}\in \hat{N}$ is the set of non-zero vectors of the fiber $\mathcal{L}(\hat{y})$. Then it is immediate that $R(\mathcal{L})$ is the quotient of $R(G(\mathbb{C}\mathfrak{e}_1)|_\mathbb{H})$ by the action of $\Gamma'$ corresponding to the isomorphisms $\phi_\gamma$ from \eqref{linearizationOP(1)}. Moreover, we have a natural isomorphism: $$R(G(\mathbb{C}\mathfrak{e}_1)|_\mathbb{H})=G|_{\mathbb{H}}/P^+\subset G/P^+$$ where $P^+$ is the kernel of the representation of $P$ on $\mathbb{C}\mathfrak{e}_1$ (i.e. the unipotent radical of $P$). Through this identification, the action of $\Gamma'$ is the natural left action of $\Gamma'\subset G$ on $G/P^+$ (note that $G/P^+$ is biholomorphic to an open subset of $\mathbb{C}^2\setminus \{0\}$ invariant through $\Gamma'$ for the standard action).

Let $\mathbb{Z}\simeq\Delta\subset \mathbb{C}^*$ be a lattice, identified with a subgroup of the standard torus of $G$ (namely the diagonal elements). Since the right action of $\Delta$ on $G$ and the left action of $\Gamma'$ on $G$ commute, there is an induced  right action of $\Delta$ on $R(\mathcal{L})$ covering the identity on $\hat{N}$, and the quotient map is a unramified cover of the complex manifold $\hat{M}$: \begin{equation}\label{Klinglercover} \xymatrix{ & R(\mathcal{L}) \ar[dl]^{p_\Delta} \ar[dd]^{p_R} \\ \hat{M} \ar[dr]_{\hat{\pi}}  & \\ & \hat{N} } \end{equation} where $p_\Delta$ is the quotient map for the action of $\Delta$.

As a remark, note that this description also gives rise to a geometric description for a holomorphic flat affine connection $\hat{\nabla}_0$ on $\hat{M}$. Indeed, $G/P^+$ identifies equivariantly as an open subset of $\mathbb{C}^2 \setminus \{0\}$, and the action of $G$ preserves the canonical flat affine connection of the affine space $\mathbb{C}^2$. In particular the restriction of this connection to $G/P^+$ is both $\Gamma'$-invariant and $\Delta$-invariant, so applying \autoref{quotientconnection} we get a holomorphic connection $\hat{\nabla}_0$ on $\hat{M}$. 
 In particular, we get the following: \begin{lemma}\label{Gammaprincipalellipticgenus2} Let $\hat{M}\longrightarrow \hat{N}$ be a principal elliptic surface with $g(\hat{N})\geq 2$ and $b_1(\hat{M})$ odd. Let $\overline{q}_R: (G/P^+)|_{\mathbb{H}} \longrightarrow R(\mathcal{L})$ be the map of the quotient by the left action of $\Gamma'\subset G$. Then any automorphism $\varphi$  of $\hat{M}$ lifts through $p_\Delta\circ \overline{q}_R $ as the automorphism of $G/P^+$ corresponding to the left action of an element $A\in SL_2(\mathbb{C})$. 
 
 In particular, the holomorphic affine connection $\hat{\nabla}_0$ constructed above is invariant through any automorphism of $\hat{M}$. \end{lemma}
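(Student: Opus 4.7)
The plan is to combine the explicit form of automorphism lifts given in \eqref{liftPsi} with the cocycle structure of $SL_2(\mathbb{R})$-automorphy factors, and to descend the lift from the universal cover $\tilde{M}=\mathbb{H}\times\mathbb{C}$ to $(G/P^+)|_{\mathbb{H}}$. Given $\varphi\in\mathrm{Aut}(\hat{M})$, the first step applies \eqref{liftPsi} to obtain a lift $\tilde{\varphi}(z_1,z_2)=(\delta z_1,\mu z_2+f(z_1))$ to $\tilde{M}$, with $\delta\in PSL_2(\mathbb{R})$ lifting the induced automorphism $\overline{\varphi}\in\mathrm{Aut}(\hat{N})$ and normalizing $\Gamma'$. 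I will fix a lift $A\in SL_2(\mathbb{R})\subset SL_2(\mathbb{C})$ of $\delta$. The kernel of the projection $\tilde{M}\to (G/P^+)|_{\mathbb{H}}$, $(z_1,z_2)\mapsto (z_1,e^{z_2})$, is $2\pi i\mathbb{Z}$ acting by translation in $z_2$; equivariance of $\tilde{\varphi}$ and $\tilde{\varphi}^{-1}$ with this action forces $\mu=\pm 1$, and I will treat $\mu=1$, the case $\mu=-1$ being analogous via composition with the anti-diagonal involution of $SL_2(\mathbb{C})$.

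Next, compatibility of $\tilde{\varphi}$ with $\varphi_\gamma$ as in \eqref{varphihyperbolic} modulo the lattice $\Lambda$ gives, with $\gamma':=\delta\gamma\delta^{-1}\in\Gamma'$, the cocycle relation
\[
f(\gamma z_1)-f(z_1)\equiv\log_{\gamma'}(c_{\gamma'}\delta z_1+d_{\gamma'})-\log_\gamma(c_\gamma z_1+d_\gamma)\pmod{\Lambda}.
\]
The matrix identity $\gamma'\delta=\delta\gamma$ yields the automorphy-factor product $(c_{\gamma'}(\delta z_1)+d_{\gamma'})(c_\delta z_1+d_\delta)=(c_\delta(\gamma z_1)+d_\delta)(c_\gamma z_1+d_\gamma)$. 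After matching branches of the logarithm, the auxiliary function $F(z_1):=f(z_1)-\log(c_\delta z_1+d_\delta)$ then satisfies $F(\gamma z_1)-F(z_1)\in\Lambda$ for all $\gamma\in\Gamma'$, and this difference is constant in $z_1$ by discreteness of $\Lambda$. Consequently $e^F$ is a nowhere-vanishing holomorphic section of a flat line bundle $\mathcal{L}_\chi$ on $\hat{N}$ with monodromy $\chi\colon\Gamma'\to e^\Lambda=\Delta$; on the compact curve $\hat{N}$ of genus $\geq 2$, a degree-zero line bundle admitting a nonzero holomorphic section must be trivial, so $\chi$ is trivial and $e^F$ descends to a nonzero constant $C\in\mathbb{C}^*$. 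Hence $e^{f(z_1)}=C(c_\delta z_1+d_\delta)$.

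Descending $\tilde{\varphi}$ through $(z_1,z_2)\mapsto (z_1,e^{z_2})$ then produces the automorphism $(z_1,w)\mapsto(\delta z_1,C(c_\delta z_1+d_\delta)w)$ of $(G/P^+)|_{\mathbb{H}}$. Through the trivialization via $\sigma_0$ of \autoref{linebundleproj}, this is the composition of the left action of $A$ with the right torus action by $\mathrm{diag}(C,C^{-1})$; the latter acts in the $\mathbb{C}^*$-fiber as multiplication by $C$ and becomes a deck transformation modulo $\Delta$, so absorbing it yields the desired lift as the left action of an element of $SL_2(\mathbb{C})$. The second assertion is then automatic: $\hat{\nabla}_0$ is the descent of the canonical flat affine connection of $\mathbb{C}^2$, which is invariant under every linear transformation, hence under the lift of any $\varphi\in\mathrm{Aut}(\hat{M})$.

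The main obstacle lies in the cocycle step: choosing the branches $\log_\gamma,\log_{\gamma'}$ coherently so that $F(\gamma z_1)-F(z_1)$ genuinely lands in $\Lambda$, and then identifying the residual character $\chi\colon\Gamma'\to\Delta$ with the monodromy of a flat line bundle on $\hat{N}$ whose triviality follows from the existence of the nowhere-vanishing section $e^F$.
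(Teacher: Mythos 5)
Your overall strategy (lift, compare automorphy factors via the cocycle identity, conclude the fiber factor is constant) is the same as the paper's, but the decisive step of your argument fails. You reduce to showing that $e^F$ is constant, where $e^F$ is a nowhere-vanishing holomorphic function on $\mathbb{H}$ satisfying $e^{F(\gamma z_1)}=\chi(\gamma)\,e^{F(z_1)}$ for a character $\chi\colon\Gamma'\to\Delta$. From the nowhere-vanishing section you correctly get that the flat line bundle $\mathcal{L}_\chi$ is holomorphically trivial, but \emph{``so $\chi$ is trivial''} is a non sequitur: on a compact curve of genus $g\geq 2$ the map $\mathrm{Hom}(\Gamma',\mathbb{C}^*)\to \mathrm{Pic}^0(\hat{N})$ has a $g$-dimensional kernel, consisting of the characters $\gamma\mapsto \exp\left(\int_\gamma \omega\right)$ with $\omega\in H^0(\hat{N},\mathcal{K}_{\hat{N}})$, and all of these define the trivial holomorphic bundle. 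A nontrivial such character with values in $\Delta$ exists exactly when some nonzero $\omega$ has all periods in the lattice $\log\Delta+2\pi i\mathbb{Z}$, i.e.\ when $\hat{N}$ admits a nonconstant holomorphic map $u$ to the elliptic curve $E=\mathbb{C}^*/\Delta$; and this situation is actually realized by automorphisms of $\hat{M}$, namely the fiberwise translations $x\mapsto x\cdot u(\hat{\pi}(x))$ coming from the principal $E$-bundle structure, whose lifts $(z_1,w)\mapsto(z_1,\tilde{u}(z_1)w)$ have nonconstant fiber factor. So your argument cannot close as written. It is worth noting that the paper avoids your mod-$\Lambda$ bookkeeping by lifting $\varphi$ directly to $(G/P^+)|_{\mathbb{H}}\simeq\mathbb{H}\times\mathbb{C}^*$ and imposing the \emph{exact} normalization $\tilde{\varphi}\circ\gamma=\gamma'\circ\tilde{\varphi}$, from which $\lambda(\gamma z_1)=\lambda(z_1)$ on the nose; the $\Delta$-valued ambiguity your computation surfaces is precisely what that normalization quietly suppresses (the deck group of $p_\Delta\circ\overline{q}_R$ is $\Gamma'\times\Delta$, not $\Gamma'$). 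In the paper's later application the relevant automorphisms have finite order, which forces the translation part to be a constant torsion translation, so the substance survives there; but your proposed bridge --- triviality of $\mathcal{L}_\chi$ implies triviality of $\chi$ --- is false and is exactly where the difficulty lives.

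Two further problems. First, your treatment of $\mu=-1$ is wrong: the anti-diagonal element of $SL_2(\mathbb{C})$ acts linearly on $\mathbb{C}^2$, hence in the trivialization it sends $(z_1,w)$ to (up to signs) $(1/z_1,\,z_1 w)$ --- it rescales fibers, it does not invert them --- and fiber inversion $w\mapsto w^{-1}$ is not induced by any linear map of $\mathbb{C}^2$ at all. The case $\mu=-1$ must instead be \emph{excluded}: such an automorphism would identify $\Delta\backslash R(\mathcal{L})$ with $\Delta\backslash R(\mathcal{L}^{-1})$ over an automorphism of $\hat{N}$, which is impossible because $\deg\mathcal{L}=g-1>0$ by \eqref{Lisroot} while pullback by an automorphism preserves degree. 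Second, you assert rather than prove that $\tilde{\varphi}$ descends through $(z_1,z_2)\mapsto(z_1,e^{z_2})$, i.e.\ that conjugation by $\tilde{\varphi}$ preserves the subgroup $2\pi i\mathbb{Z}=\langle \psi_1\rangle$ of the deck group; this requires an argument (for instance that this subgroup is, up to sign, the intersection of the commutator subgroup of $\pi_1(\hat{M})$ with the central lattice, using that the $\log_\gamma$-cocycle of \eqref{varphihyperbolic} has commutators landing in $2\pi i\mathbb{Z}$), although the paper is equally silent on the corresponding liftability claim. Finally, a cosmetic point shared with the paper: even with constant fiber factor $c$ the lift is the linear map $c\cdot A_1\in GL_2(\mathbb{C})$, which lies in $SL_2(\mathbb{C})$ only for special $c$; this is harmless for the invariance of $\hat{\nabla}_0$, which holds for all linear maps.
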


 \begin{proof} The composition $p_\Delta\circ \overline{q}_R $  is an unramified cover of $\hat{M}$, and any automorphism $\varphi$ of $\hat{\pi}:\hat{M}\longrightarrow \hat{N}$ admits a lift to the total space of this cover $\tilde{\varphi}$. In particular, $\tilde{\varphi}$ normalizes the Galois group of $\overline{q}_R$, that is $\Gamma'$, that is : \begin{equation}\label{normalizeGamma}\forall \gamma \in \Gamma',\exists \gamma'\in \Gamma',\, \tilde{\varphi}\circ \gamma =\gamma'\circ \tilde{\varphi} \end{equation} Such an automorphism covers an automorphism of $\mathbb{H}$, that is the action of some $A_1\in SL_2(\mathbb{R})$. Hence, through the trivialization $G/P^+ \simeq \mathbb{H}\times \mathbb{C}^*$ induced by the section $\sigma_0$ from the proof of \autoref{linebundleproj}, we have: $$\tilde{\varphi}(z,b)=(A_1 \cdot z, \lambda(z)b)$$ for some holomorphic function $\lambda:\mathbb{H}\longrightarrow \mathbb{C}^*$. Then \eqref{normalizeGamma} rewrites as: $$\begin{array}{cccc}&(A_1 \cdot \gamma \cdot z&,&\lambda(\gamma \cdot z) (c_{A_1}\gamma \cdot z + d_{A_1})^{-1}(c_\gamma z+d_\gamma)^{-1} b)\\&&&\\=&(\gamma' \cdot A_1 \cdot z&,& \lambda(z) (c_{\gamma'}A_1\cdot z + d_{\gamma'})^{-1}(c_{A_1} z+d_{A_1})^{-1}b) \end{array}$$ In particular $\gamma'=A_1\gamma A_1^{-1}$ so that, using that $\gamma\mapsto \alpha_\gamma$ is an automorphy factor, $\lambda$ is a $\Gamma'$-invariant holomorphic function, that is a constant. This implies the first assertion. The second one is obtained by applying \autoref{quotientconnection}.
     \end{proof}

The final ingredient is the description of holomorphic projective structures in terms of analytical objects called holomorphic \textit{$SL_2(\mathbb{C})$-opers} (see for example \cite{BDG}). On the model $\mathbb{P}^1=G/P$, the sheaf of one-jets $J^1(\mathcal{L}_{G/P}^*)$, where $\mathcal{L}_{G/P}=G(\mathbb{C}\mathfrak{e}_1)$, is naturally isomorphic to $G(\mathbb{C}^2)$ (it can be seen by considering trivialisations of $G\longrightarrow G/P$ as in the proof of \autoref{linebundleproj}). In particular, it contains $\mathcal{L}_{G/P}$ as a locally free submodule. Since $G(\mathbb{C}^2)$ is by definition the sheaf of sections of a homogeneous bundle on $G/P$,there is a natural linearization for the left action of $G$ (see \autoref{linearization}) on this sheaf of $\mathcal{O}_{G/P}$-modules, denoted by $(\phi^J_g)_{g\in G}$. Moreover, it admits a global trivialization induced by the two $P$-equivariant maps $\tilde{s}_i:G\longrightarrow \mathbb{C}^2$ defined by: $$  \tilde{s}_i(\begin{pmatrix} a & b\\c & d \end{pmatrix})=\begin{pmatrix} a & b \\ c & d\end{pmatrix}^{-1} \mathfrak{e}_i$$ for $i=1,2$. The corresponding flat connection of trivial module $\nabla^{J}_{G/P}$ on $G(\mathbb{C}^2)$ is invariant through the isomorphisms $(\phi^J_g)_{g\in G}$ since the above functions are invariant through these isomorphisms. Moreover, since $G$ is a $SL_2(\mathbb{C})$-reduction of the bundle of basis of $G(\mathbb{C}^2)$, there is a natural isomorphism $\overset{2}{\bigwedge} G(\mathbb{C}^2) \simeq \mathcal{O}_{\mathbb{P}^1}$ and the canonical connection coïncides with the connection induce by $\nabla^{J}_{G/P}$. The key property of $\nabla^J_{G/P}$ is that the induced morphism of line bundles: \begin{equation}\label{inducedmorphismoper} \begin{array}{ccccc}[\nabla^J_{G/P}]&:& \mathcal{L}_{G/P}& \longrightarrow & \mathcal{K}_{\mathbb{P}^1}\otimes G(\mathbb{C}^2)/\mathcal{L}_{G/P}\end{array}  \end{equation} is an isomorphism. This indeed enables to recover that \begin{equation}\label{characterizationL} \mathcal{L}_{G/P}^{\otimes 2}=\mathcal{L}_{G/P}\otimes( G(\mathbb{C}^2)/\mathcal{L}_{G/P})\otimes \mathcal{K}_{\mathbb{P}^1} = \mathcal{K}_{\mathbb{P}^1} \end{equation} that is $\mathcal{L}_{G/P}=\mathcal{O}_{\mathbb{P}^1}(1)$. The restriction of $\nabla^J_{G/P}$ over $\mathbb{H}$ is $\Gamma'$-invariant, so it induces a connection $\hat{\nabla}^J_0$ on: \begin{equation}\label{Eproj} \mathcal{E}:=(\overline{q}_* G(\mathbb{C}^2)|_{\mathbb{H}})^{\Gamma'} \end{equation} Since the $SL_2(\mathbb{C})$-reduction of $G(\mathbb{C}^2)$ is also $\Gamma'$-invariant, we get $\overset{2}{\bigwedge}\mathcal{E}=\mathcal{O}_{\hat{N}}$. By construction, we have the same isomorphism as in \eqref{inducedmorphismoper}, so we get: \begin{equation}\label{Eisjet} \mathcal{E}=J^1(\mathcal{L}^*) \end{equation} and recover that \begin{equation}\label{Lisroot} \mathcal{L}^{\otimes 2}=\mathcal{K}_{\hat{N}}\end{equation} The pair $(\mathcal{E},\hat{\nabla}^J_0)$ is called the \textit{holomorphic $SL_2(\mathbb{C})$-oper corresponding to the projective structure} $\hat{N}=\Gamma'\backslash\mathbb{H}$. 

Using these facts, we can prove:

\begin{proposition}\label{noquotientsprincipalellipticg2} Let $\hat{M}\overset{\hat{\pi}}{\longrightarrow}\hat{N}$ be a principal elliptic bundle over a compact curve of genus $g(\hat{N})\geq 2$. Let $\overline{q}: \hat{N}\longrightarrow N$ be a Galoisian finite cover such that $\hat{M}$ is the pullback of a an elliptic surface $\pi:M\longrightarrow N$. Suppose that the sum of the multiplicities $(m_\alpha)_{\alpha \in I}$ of $\overline{q}$ at the ramification points is a multiple of the degree $k=deg(\overline{q})$, and that the Galois group $\Gamma$ of $\overline{q}$ fixes the ramification points.  Then $\overline{q}$ is an unramified cover. \end{proposition}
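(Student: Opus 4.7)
The plan is to compare the canonical bundles of $\hat{M}$ and $M$ using Kodaira's canonical bundle formula for the minimal elliptic fibration $\pi:M\to N$, pulled back along the cover $q:\hat{M}\to M$.

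First I would verify that $q$ is étale, so that $q^*\mathcal{K}_M\simeq \mathcal{K}_{\hat{M}}$. Since $\Gamma$ fixes each ramification point $y_\alpha\in\hat{N}$, it acts on the corresponding fiber $\hat{F}_\alpha=\hat{\pi}^{-1}(y_\alpha)$. By \autoref{Gammaprincipalellipticgenus2}, any $\gamma\in\Gamma$ lifts to an element $A_\gamma\in SL_2(\mathbb{R})$ normalizing $\Gamma'$; the stabilizer of $y_\alpha$ corresponds to elliptic elements with a fixed point $z_\alpha\in\mathbb{H}$, acting on the $\mathbb{C}^\times$-fiber of $G/P^+\to G/P$ over $z_\alpha$ by multiplication by the root of unity $c_\gamma z_\alpha+d_\gamma$. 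Descending to $\hat{F}_\alpha=\mathbb{C}^\times/\Delta$ the action remains multiplication by this root of unity, and since $\Delta$ is generated by some $\lambda$ with $|\lambda|\neq 1$ it contains no nontrivial root of unity; the $\Gamma$-action on each $\hat{F}_\alpha$ is therefore free, hence free on all of $\hat{M}$. Thus $q$ is étale of degree $k$, so $q^*\mathcal{K}_M\simeq \mathcal{K}_{\hat{M}}$.

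Then I would apply Kodaira's formula. Since $M$ is minimal with only multiple smooth elliptic curves as singular fibers, $\chi_{\mathrm{top}}(M)=0$ and $K_M^2=0$, so Noether's formula yields $\chi(\mathcal{O}_M)=0$; hence $L:=R^1\pi_*\mathcal{O}_M$ has degree $0$ and
$$\mathcal{K}_M\simeq \pi^*(\mathcal{K}_N\otimes L)\otimes \mathcal{O}_M\Bigl(\sum_{\alpha}(m_\alpha-1)\,F_\alpha\Bigr).$$
Pulling back via the étale cover $q$, using $q^*F_\alpha=m_\alpha\hat{F}_\alpha=m_\alpha\hat{\pi}^*(y_\alpha)$ and $\mathcal{K}_{\hat{M}}\simeq \hat{\pi}^*\mathcal{K}_{\hat{N}}$ (the relative canonical of the principal elliptic bundle $\hat{M}\to\hat{N}$ is trivial), and cancelling $\hat{\pi}^*$, one obtains in $\mathrm{Pic}(\hat{N})$
$$\mathcal{K}_{\hat{N}}\simeq \overline{q}^*(\mathcal{K}_N\otimes L)\otimes \mathcal{O}_{\hat{N}}\Bigl(\sum_{\alpha}(m_\alpha-1)m_\alpha\, y_\alpha\Bigr).$$

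Combining with the Riemann--Hurwitz identity $\mathcal{K}_{\hat{N}}\simeq \overline{q}^*\mathcal{K}_N\otimes \mathcal{O}_{\hat{N}}(\sum_\alpha(m_\alpha-1)y_\alpha)$ yields $\overline{q}^*L\simeq \mathcal{O}_{\hat{N}}\bigl(-\sum_{\alpha}(m_\alpha-1)^2\,y_\alpha\bigr)$; taking degrees gives $0=k\deg L=-\sum_{\alpha}(m_\alpha-1)^2$. Since each $(m_\alpha-1)^2\geq 0$, every $m_\alpha$ equals $1$ and $\overline{q}$ is unramified. The main obstacle is the first step: verifying that the $\Gamma$-action on $\hat{M}$ is free above each fixed point of $\Gamma$ on $\hat{N}$ by exploiting the $SL_2(\mathbb{R})$-description of \autoref{Gammaprincipalellipticgenus2} and the arithmetic of $\Delta$; the hypothesis on $\sum_\alpha m_\alpha$ being a multiple of $k$ enters here to ensure that the local lifts of $\Gamma$ assemble into a coherent $\Gamma$-action compatible with the principal elliptic structure of $\hat{M}$.
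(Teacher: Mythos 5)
There is a genuine gap, and it is located exactly where your computation produces its contradiction: the pullback formula $q^*F_\alpha=m_\alpha\hat{F}_\alpha$ is false. Since (as you correctly argue via the torsion-freeness of $\Delta$) $q:\hat{M}\to M$ is étale, the pullback of the \emph{reduced} divisor $F_\alpha$ is again reduced: from $\pi^*y'_\alpha=m_\alpha F_\alpha$, $q^*\pi^*=\hat{\pi}^*\overline{q}^*$ and $\overline{q}^*y'_\alpha=m_\alpha y_\alpha$ one gets $m_\alpha\, q^*F_\alpha=m_\alpha\,\hat{\pi}^*y_\alpha$, i.e.\ $q^*F_\alpha=\hat{F}_\alpha$ (one can also see this in the local model $(D\times E)/(\mathbb{Z}/m_\alpha)$). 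Substituting the correct formula, your canonical-bundle identity on $\hat{N}$ (the cancellation of $\hat{\pi}^*$ is legitimate, by the projection formula and $\hat{\pi}_*\mathcal{O}_{\hat{M}}=\mathcal{O}_{\hat{N}}$) becomes
\begin{equation*}
\mathcal{K}_{\hat{N}}\;\simeq\;\overline{q}^*(\mathcal{K}_N\otimes L)\otimes\mathcal{O}_{\hat{N}}\Bigl(\sum_\alpha(m_\alpha-1)\,y_\alpha\Bigr),
\end{equation*}
which is \emph{exactly} the Riemann--Hurwitz relation (using $\deg L=0$); combining the two yields $0=0$ and no constraint whatsoever on the $m_\alpha$. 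Worse, the strategy is irreparable at this level of generality: take $\hat{M}=E\times\hat{N}$ with $\Gamma=\mathbb{Z}/m$ acting by $(e,y)\mapsto(e+t,\gamma\cdot y)$, $\gamma$ of order $m$ with fixed points on $\hat{N}$ and $t$ of order $m$ on $E$. Then $q$ is étale, $M=\Gamma\backslash\hat{M}$ is an elliptic surface over $N=\Gamma\backslash\hat{N}$ with multiple fibers, the Galois group fixes the ramification points and the divisibility hypothesis holds, yet $\overline{q}$ is ramified --- and all the canonical-bundle and Riemann--Hurwitz bookkeeping is consistent. So no argument using only étaleness, Kodaira's formula and degrees can prove the proposition; one must use structure specific to the odd-$b_1$ case.

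That extra structure is what the paper's proof supplies. It uses Klingler's description $\hat{M}=R(\mathcal{L})/\Delta$ with $\mathcal{L}^{\otimes 2}=\mathcal{K}_{\hat{N}}$, and shows --- by precisely your roots-of-unity-versus-$\Delta$ observation, but applied to the $\Gamma$-linearization of $\mathcal{L}$ along the fixed fibers rather than to freeness of the action --- that this theta characteristic descends \emph{on the nose}: $\mathcal{O}_{\hat{N}}\otimes\overline{q}^*\mathcal{L}'\simeq\mathcal{L}$ for $\mathcal{L}'=(\overline{q}_*\mathcal{L})^\Gamma$, hence $\deg\mathcal{L}=k\deg\mathcal{L}'$ with \emph{no ramification correction term}. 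This untwisted descent is strictly stronger than the canonical-bundle relation (which always carries the correcting divisor $\sum(m_\alpha-1)y_\alpha$) and is exactly what fails for $E\times\hat{N}$. The second input is the bound $2\deg\mathcal{L}'\leq\deg\mathcal{K}_N$, coming from the nonvanishing second fundamental form $[\nabla'^J]:\mathcal{L}'\to\Omega^1_N\otimes\mathcal{E}'/\mathcal{L}'$ of the descended oper connection (existence of the descent uses the factorization of the universal cover through $\overline{q}$, via Lemma~\ref{Gammaprincipalellipticgenus2}). Feeding $\deg\mathcal{K}_{\hat{N}}=2k\deg\mathcal{L}'=k\deg\mathcal{K}_N-k\deg S$ into Riemann--Hurwitz gives $k\deg S+\sum_\alpha m_\alpha=|I|$, which with $m_\alpha\geq 2$ and $\deg S\geq 0$ forces $I=\emptyset$. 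In short: your étaleness step is correct and uses the right geometric ingredient, but the step that was supposed to generate the contradiction rests on a wrong divisor computation, and the mechanism of the paper's proof (exact descent of a square root of $\mathcal{K}_{\hat{N}}$ plus the oper degree bound) is absent from your proposal.
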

\begin{proof} Let $\Gamma$ be the Galois group of $\overline{q}$. We identify $\Gamma$ with a subgroup of $SL_2(\mathbb{R})$ normalizing the holonomy $\Gamma'$ of the uniform projective structure on $\hat{N}$ as described before. By \autoref{Gammaprincipalellipticgenus2}, the action of the Galois group $\Gamma$ of $\overline{q}$ on $\hat{N}$ lifts to a left action of $\Gamma$ on the cover $R(\mathcal{L})\overset{q_\Delta}{\longrightarrow} \hat{M}$ (see \eqref{Klinglercover}), obtained from the natural left action of $\Gamma$ on $G$. In particular, there is an induced $\Gamma$-linearization (see \autoref{linearization}) $(\phi^J_{\epsilon})_{\epsilon \in \Gamma}$ on $J^1(\mathcal{L}^*)$. By construction, the holomorphic $SL_2(\mathbb{C})$-oper $(\mathcal{E},\hat{\nabla}^J_0)=(J^1(\mathcal{L}^*),\hat{\nabla}^J_0)$ (see above) is invariant by this $\Gamma$-linearization.
 
Consider the line bundle \begin{equation}\mathcal{L}'=(\overline{q}_* \mathcal{L})^\Gamma\end{equation} on $N$. It is a submodule of the locally free $\mathcal{O}_N$-module \begin{equation}\label{E'oper}\mathcal{E}'=(\overline{q}_* \mathcal{E})^\Gamma\end{equation}

We  prove the existence of a holomorphic connection $\nabla^J_0$ on $\mathcal{E}'$ with the same properties as in \eqref{inducedmorphismoper}. For, we first prove that the universal cover of $N$ factorizes by $\overline{q}:\hat{N}\longrightarrow N$. Pick a system of generators $\gamma_1,\ldots,\gamma_r$ for $\Gamma$. By \autoref{Gammaprincipalellipticgenus2}, these generators lifts through $p_\Delta\circ \overline{q}_R$ to the left actions of elements $A_1,\ldots,A_r\in SL_2(\mathbb{R})$ on $G/P^+|_\mathbb{H}$. By definition of $\Gamma$, $N$ is the quotient of $\mathbb{H}$ by the subgroup $\Gamma''\subset SL_2(\mathbb{R})$ spanned by the $A_j$ and $\Gamma'$. Suppose that there exists $j\in \{1,\ldots,r\}$ such that the Möbius transformation corresponding to $A_j$ fixes some $z\in \mathbb{H}$. Recall that $\gamma_j$ fixes the fibers of $\overline{p}(z)\in \hat{N}$ in $\hat{M}=R(\mathcal{L})/\Delta$. Since $\gamma_j$ have finite order and $\Delta\simeq \mathbb{Z}$, the induced action on $R(\mathcal{L})$ fixes the fiber of $\overline{p}(z)$. Hence $A_j$ fixes the fiber of $z$ in $G/P^+|_\mathbb{H}$. Thus $A_j=Id$, that is $\Gamma''$ acts freely and properly discontinusly, in other words $\mathbb{H}\longrightarrow \Gamma'' \backslash \mathbb{H}=N$ is the universal cover. Since the $A_j$ normalizes $\Gamma'$, we have a factorization: \begin{equation}\label{factorizationuniversal}\xymatrix{ \mathbb{H} \ar[rr]^{\overline{q}''} \ar[rd]^{\overline{p}} & & N \\ & \hat{N} \ar[ru]^{\overline{q}} } \end{equation} where $\overline{q}''$ is the quotient by $\Gamma''$. In particular, $\mathcal{E}'= (\overline{q}''_* G(\mathbb{C}^2)|_\mathbb{H})^{\Gamma''}$ and the quotient of $G|_\mathbb{H}$ by $\Gamma''$ is a holomorphic $P$-reduction of its bundle of basis.

Moreover, the holomorphic connection $\nabla^J_{G/P}$ is invariant by the action of $\Gamma''$, and since $\overline{q}''$ is an unramified cover, \autoref{quotientconnection} implies the existence of an induced holomorphic connection ${\nabla'}^J$ on $\mathcal{E}'$. By construction, the induced morphism: $$[{\nabla'}^J]: \mathcal{L}'\longrightarrow \Omega^1_N \otimes \mathcal{E}'/\mathcal{L}'$$ is a non-trivial morphism.  We denote by $S$ the effective divisor corresponding to the line bundle $End(\mathcal{L}',\mathcal{K}_N \otimes \mathcal{E}/\mathcal{L}')$. 

In the sequel, we employ the notation $deg(\mathcal{L})=\int_{N} c_1(\mathcal{L})$ for any line bundle $\mathcal{L}$ on a compact complex curve $N$.  Using  $deg(\mathcal{E}')=0$ (by the existence of a $SL_2(\mathbb{C})$-holomorphic reduction of its bundle of basis), we get: \begin{equation}\label{degS}deg(S)=deg(\mathcal{K}_{N})-2deg(\mathcal{L}')\end{equation}  
In one other hand, consider the sheaf of sections of the pullback line bundle, that is $\mathcal{O}_{\hat{N}}\otimes \overline{q}^* \mathcal{L}'$. Since $\overline{q}^* \mathcal{L}'$ is by definition the subsheaf of $\mathcal{L}$ spanned by the sections invariant by the action of $\Gamma$, we have a well-defined non-trivial morphism of modules: \begin{equation}\label{iotapullback}\begin{array}{ccc}\iota :\mathcal{O}_{\hat{N}}\otimes \overline{q}^* \mathcal{L}' & \longrightarrow & \mathcal{L}\\ f\otimes s & \mapsto & fs \end{array}\end{equation} Recall that the action of $\Gamma$ on $\hat{M}=\Delta\backslash R(\mathcal{L})$ fixes the fibers of the ramification locus of $\overline{q}$. Hence, in a neighborhood $U_\alpha$ of any component $D_\alpha$ of the ramification locus of $\overline{q}$ in $\hat{N}$, we can find a coordinate $z$ and a trivialization of $\mathcal{L}$, such the action of the automorphism $\epsilon \in \Gamma$ corresponding to a generator of $\pi_1(U_\alpha\setminus D_\alpha,\hat{y})$,  on a section $s\in \mathcal{L}(U_\alpha)$, is given by: $$\epsilon \cdot s(z) = \nu_\epsilon s\circ \epsilon^{-1}(z)$$ for some $\nu_\epsilon \in \Delta$. But $\epsilon$ has finite order $m_\alpha$ and $\Delta$ contains no non-trivial cyclic element, so $\nu=1$. As a consequence, the section corresponding to $1$ in the choosen trivialisation of $\mathcal{L}$ is a local invariant section of $\mathcal{L}$ on $U_\alpha$. This in turn implies that \autoref{iotapullback} is an isomorphism.  Hence, since $\overline{q}$ is a finite cover of degree $k$, we get: \begin{equation}\label{degreeLL'}   deg(\mathcal{L})= deg(\mathcal{O}_{\hat{N}}\otimes q^* \mathcal{L}')=k deg(\mathcal{L}')  \end{equation}
Now, by the Riemann-Hurwitz formula, we also have:  \begin{equation}\label{RiemannHurwitz} -deg(\mathcal{K}_{\hat{N}})=-kdeg(\mathcal{K}_N)+|I|-\underset{\alpha \in I}{\sum}m_\alpha\end{equation} By \eqref{degreeLL'}, and \eqref{degS}, we also have: $$ \begin{array}{ccc}-deg(\mathcal{K}_{\hat{N}})&=&-2deg(\mathcal{L})\\&&\\&=& -2k \, deg(\mathcal{L}') \\&&\\&=& -k\, deg(\mathcal{K}_{N}) + k\, deg(S) \end{array}$$  Combining the above equality with \eqref{RiemannHurwitz} we get: $$ k\, deg(S)+\underset{\alpha\in I}{\sum}m_\alpha = |I|$$ By the assumption on the degree $k$ of $\overline{q}$ the above equality rewrites as: $$|I|=k' \underset{\alpha\in I}{\sum}m_\alpha$$ with $k'\geq 0$ and $m_\alpha\geq 2$ for any $\alpha \in I$. This is possible only if $k'=0$, i.e $I=\emptyset$ and $\overline{q}$ is unramified.

\end{proof}

\begin{theorem}\label{quotientsprincipalellipticgenus2} Let $(M,\nabla)$ be a minimal meromorphic affine surface with $a(M)=1$ and $(\hat{M}\overset{\hat{\pi}}{\longrightarrow}\hat{N},\hat{\nabla})$ the meromorphic affine principal elliptic bundle obtained as in \autoref{finitecover}. If the genus $g(\hat{N})\geq 2$, then $\hat{M}=M$. 

\end{theorem}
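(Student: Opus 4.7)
The plan is to deduce Theorem~\ref{quotientsprincipalellipticgenus2} from Proposition~\ref{noquotientsprincipalellipticg2} applied to the cover $\overline{q}:\hat{N}\longrightarrow N$ in \eqref{finitecover}. The first step is to observe that, since $\hat{\nabla}=q^\star\nabla$ provides a meromorphic affine connection on $\hat{M}$ and $g(\hat{N})\geq 2$, Proposition~\ref{b1even} forces $b_1(\hat{M})$ to be odd; otherwise $\hat{M}$ would admit no meromorphic affine connection whatsoever. We are therefore in the setting of Section~6.2, where the geometric description via $R(\mathcal{L})/\Delta$ and the uniform projective structure on $\hat{N}$ is available.

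The next step is to verify the two hypotheses of Proposition~\ref{noquotientsprincipalellipticg2}: that the Galois group $\overline{\Gamma}$ of $\overline{q}$ fixes each ramification point pointwise, and that the sum of the ramification indices $\sum_{\alpha\in I}m_\alpha$ is a multiple of $k=\deg(\overline{q})$. The cover $\overline{q}$ of \eqref{finitecover} is characterized only by the requirement that its base change kills the multiplicities $(m_\beta)_\beta$ of the multiple singular fibers of $\pi:M\longrightarrow N$, leaving a genuine freedom in its choice. I would choose $\overline{q}$ to be a Galois cover totally ramified over each $y_\beta\in\overline{S}$, for instance a cyclic cover whose degree $k$ is a common multiple of the $m_\beta$ and whose local model at each $y_\beta$ is $z\mapsto z^k$. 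With this choice, the fiber of $\overline{q}$ above each $y_\beta$ consists of a single point stabilized by all of $\overline{\Gamma}$, and the ramification index at that point equals $k$, so $\sum_{\alpha\in I}m_\alpha=|I|k$ is trivially divisible by $k$.

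Proposition~\ref{noquotientsprincipalellipticg2} then yields that $\overline{q}$ is unramified. Since $\overline{q}$ was constructed to be ramified precisely over $\overline{S}$, this forces $\overline{S}=\emptyset$, i.e.\ $\pi:M\longrightarrow N$ has no multiple singular fibers. By construction of \eqref{finitecover} in that case the base change is trivial, so $\hat{N}=N$ and $\hat{M}=M$. The main technical point is the existence of a totally ramified Galois cover realizing the prescribed ramification data over the compact curve $N$; this is a standard Riemann--Hurwitz existence argument, and the surjectivity of the functor in Theorem~\ref{reductionprincipalelliptic} allows us to replace $\overline{q}$ by such a cover without altering the minimal pair $(M,\nabla)$.
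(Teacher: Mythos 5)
Your overall route---get $b_1(\hat{M})$ odd from \autoref{b1even}, then feed the cover $\overline{q}$ of \eqref{finitecover} into \autoref{noquotientsprincipalellipticg2}---is indeed the paper's strategy, but the step in which you arrange the hypotheses of \autoref{noquotientsprincipalellipticg2} has a genuine gap: the cover you propose to choose does not exist in general. You ask for a Galois cover $\overline{q}:\hat{N}\longrightarrow N$ that is \emph{totally} ramified over each $y_\beta\in\overline{S}$ (a single preimage over each branch point, so that the full Galois group fixes it), of degree $k$ a common multiple of the multiplicities $m_\beta$. For a Galois cover of curves the stabilizer of any point is cyclic; if some fiber is a single point its stabilizer is the whole Galois group, which is therefore forced to be cyclic. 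So your request amounts to a surjection $H_1(N\setminus\overline{S},\mathbb{Z})\longrightarrow \mathbb{Z}/k\mathbb{Z}$ sending each local loop $\gamma_\beta$ to a residue $c_\beta$ that is a unit mod $k$ (total ramification), subject to the relation $\sum_\beta c_\beta\equiv 0 \pmod{k}$. This is obstructed: if $|\overline{S}|=1$ no such cover exists for any $k>1$ (the single $c_1$ would be both a unit and $\equiv 0$); and if, say, there are three multiple fibers all of even multiplicity, then $k$ must be even, each $c_\beta$ must be odd, and a sum of three odd residues cannot vanish mod $2$. So the ``standard Riemann--Hurwitz existence argument'' you invoke is false as stated, and you cannot in general replace $\overline{q}$ by a totally ramified cyclic cover.

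The paper circumvents exactly this difficulty by \emph{not} choosing a better cover. It factors the given cover as $\overline{q}=\overline{q}'\circ\overline{q}_1$ with $\overline{q}_1$ finite unramified and $\overline{q}'$ a composition of cyclic covers, and first absorbs the unramified part: using \autoref{Gammaprincipalellipticgenus2} (lifting automorphisms to the left action of elements of $SL_2(\mathbb{C})$ on $G/P^+$), the intermediate quotient $\hat{M}_1=\Gamma_1\backslash\hat{M}$ is shown to be again of the form $\Delta\backslash R(\mathcal{L}_1)$ over a curve $\hat{N}_1=\Gamma''\backslash\mathbb{H}$ of genus $\geq 2$ with odd first Betti number, so that one may assume $\overline{q}=\overline{q}'$. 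Only after this reduction do the two hypotheses of \autoref{noquotientsprincipalellipticg2} (the Galois group fixing the ramification points, and the divisibility condition between $\sum_\alpha m_\alpha$ and $k$) hold by construction. To repair your argument you would need precisely such an absorption step for the part of the Galois action that permutes the preimages of a given $y_\beta$, which is the content you skipped. A secondary point: replacing $\overline{q}$ changes $\hat{N}$, so even where your cover exists you would still have to check that the new base has genus $\geq 2$ and that the new total space is a principal elliptic bundle with odd $b_1$ before \autoref{noquotientsprincipalellipticg2} applies (enlarging $k$ handles the genus, but this is moot given the existence failure); by contrast, your reduction at the end---unramifiedness of $\overline{q}$ forces $\overline{S}=\emptyset$, hence $\hat{M}=M$---is consistent with the paper's conventions for \eqref{finitecover}.
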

\begin{proof} By \autoref{noquotientsprincipalellipticg2}, $\hat{M}$ has an odd first Betti number, so it corresponds to a quotient of $R(\mathcal{L})$ by a lattice $\mathbb{Z}\simeq \Delta\subset \mathbb{C}^*$ as in \eqref{Klinglercover}.   By construction, the Galois cover $\overline{q}:\hat{N}\longrightarrow N$ (resp. $q:\hat{M}\longrightarrow M$) in \eqref{finitecover} is a composition: $$\overline{q}=\overline{q}'\circ \overline{q}_1\, (\text{resp.} q=q'\circ \overline{q}_1)$$ where $\overline{q}':\hat{N}_1\longrightarrow N$ is a composition of cyclic covers and $\overline{q}_1$ is an unramified finite cover (the maps $q'$ and $q_1$ are the corresponding pullbacks of elliptic bundles).

Denote by $\Gamma_1\subset \Gamma$ the Galois group of $\overline{q}_1$, identified with the Galois group of $q_1$. Then  by construction $\hat{M}_1=\Gamma_1\backslash \hat{M}$ is the quotient of $R(\mathcal{L}_1)$ by $\Delta$ where $$\mathcal{L}_1=(\overline{q}_{1*}\mathcal{L})^{\Gamma_1}$$ and by \autoref{Gammaprincipalellipticgenus2}, the action of $\Gamma_1$ lifts to the natural action of a subgroup  $\tilde{\Gamma}_1\subset SL_2(\mathbb{C})$ on $G(\mathbb{C}\mathfrak{e}_1)|_{\mathbb{H}}$. Hence $\Gamma''=\langle \Gamma',\Gamma_1\rangle$ is the holonomy group of a uniform $(G,G/P)$-structure $$\overline{p}_1:\mathbb{H} \longrightarrow \hat{N}_1=\Gamma''\backslash \mathbb{H}$$ and $$\mathcal{L}_1=(\overline{p}_{1*}G(\mathbb{C}\mathfrak{e}_1)|_\mathbb{H})^{\Gamma''}$$ so $\hat{M}_1$ is obtained as in \eqref{Klinglercover}. In particular, it is a principal elliptic bundle over $g(\hat{N}_1)\geq 2$ with odd first Betti number. Without loss of generality we can and will further assume that $\hat{N}_1=\hat{N}$ and $\hat{M}_1=\hat{M}$. 

In this situation, by construction, the Galois group of  $\overline{q}$ fixes its ramification locus, and $k=deg(\overline{q})$ is a multiple of $\underset{\alpha\in I}{\sum}m_\alpha$. Then \autoref{quotientsprincipalellipticgenus2} proves that $\overline{q}:\hat{N}\longrightarrow N$ is unramified.  Since moreover $\overline{q}=\overline{q}'$, we get $\hat{M}=M$ by definition of $\overline{q}'$. \end{proof}

 \section{Conclusion}

 Comparing with \cite{InoueKobayashi}, the \autoref{reductionprincipalelliptic} together with \autoref{quotientsHopf}, \autoref{classificationsecondary}, \autoref{quotientstori}, and \autoref{noquotientsprincipalellipticg2} prove \autoref{nonew}. From the point of view of the uniformization, this suggests that we have to consider other meromorphic geometric structures to encompass more complex compact surfaces. It could be interesting to extend the technics of this paper to meromorphic projective connections (see \cite{KobayashiOchiai}) by adopting the Cartan point of view.

\bibliographystyle{plain}

\bibliography{biblio}

\end{document}